\newcommand{\abs}[1]{\lvert#1\rvert}
\newtheorem{theorem}{Theorem}[section]
\newtheorem{lemma}[theorem]{Lemma}
\theoremstyle{definition}
\newtheorem{define}[theorem]{Definition}
\newtheorem{example}[theorem]{Example}
\newtheorem{convention}[theorem]{Convention}
\newtheorem{corollary}[theorem]{Corollary}
\newtheorem{proposition}[theorem]{Proposition}
\theoremstyle{remark}
\newtheorem{remark}[theorem]{Remark}
\numberwithin{equation}{section}
\DeclareMathOperator{\Hom}{Hom}
\DeclareMathOperator{\Ext}{Ext}
\DeclareMathOperator{\Der}{Der}
\DeclareMathOperator{\ann}{ann}
\DeclareMathOperator{\gr}{gr}
\DeclareMathOperator{\Variety}{V}
\DeclareMathOperator{\xpoint}{\mathfrak{x}}
\DeclareMathOperator{\Ch}{Ch}
\DeclareMathOperator{\rad}{rad}
\DeclareMathOperator{\rel}{rel}
\DeclareMathOperator{\Weyl}{A_{\emph{n}}(\mathbb{C})}
\DeclareMathOperator{\Support}{Supp}
\DeclareMathOperator{\Spec}{Spec}
\DeclareMathOperator{\mSpec}{mSpec}
\DeclareMathOperator{\codim}{codim}
\DeclareMathOperator{\red}{red}
\author{Daniel Bath}
\title[A Note on Bernstein--Sato Varieties]{A Note on Bernstein--Sato Varieties for Tame Divisors and Arrangements}
\address{Department of Mathematics, Purdue University, Lafayette, Indiana, USA.}
\email{dbath@purdue.edu}
\subjclass[2010]{Primary 14F10; Secondary 32S40, 32S05, 32S22, 32C38.}
\keywords{Bernstein--Sato, b-function, hyperplane, arrangement, tame, logarithmic, generic}
\thanks{This research was supported by a Purdue Research Foundation grant from Purdue University and by a Simons Foundation Collaboration Grant for Mathematicians \#580839.}
\begin{document}

\maketitle

\begin{abstract}
    For strongly Euler-homogeneous, Saito-holonomic, and tame analytic germs we consider general types of multivariate Bernstein--Sato ideals associated to arbitrary factorizations of our germ. We show that these ideals are principal and the zero loci associated to different factorizations are related by a diagonal property. If, additionally, the divisor is a hyperplane arrangement, we obtain nice estimates for the zero locus of its Bernstein--Sato ideal for arbitrary factorizations and show the Bernstein--Sato ideal attached to a factorization into linear forms is reduced. As an application, we independently verify and improve upon an estimate of Maisonobe's regarding standard Bernstein--Sato ideals for reduced, generic arrangements: we compute the Bernstein--Sato ideal for a factorization into linear forms and we compute its zero locus for other factorizations.
\end{abstract}

\section{Introduction}

Let $X$ be a smooth analytic space or a $\mathbb{C}$-scheme with dimension $n$ and consider the collection $F = (f_{1}, \dots, f_{r})$ of regular, analytic functions $f_{k} \in \mathscr{O}_{X}$. Throughout, $\mathfrak{x}$ will denote a point in $X$, $f = f_{1} \cdots f_{r}$, and $\mathscr{D}_{X}$ will signify the sheaf of $\mathbb{C}$-linear differential operators. For 
\[
\textbf{a} = (a_{1}, \dots, a_{r}) \in \mathbb{N}^{r}
\]
there is a $\mathscr{D}_{X}[S] := \mathscr{D}_{X}[s_{1}, \dots, s_{r}]$-module (this is a standard polynomial ring extension where the $s_{k}$ are central variables)
\[
\mathscr{D}_{X}[S] F^{S + \textbf{a}} := \mathscr{D}_{X}[s_{1}, \dots, s_{r}]f_{1}^{s_{1} + a_{1}} \cdots f_{r}^{s_{r} + a_{r}},
\]
where the action of a differential operator on 
\[
F^{S+\textbf{a}} := f_{1}^{s_{1} + a_{1}} \cdots f_{r}^{s_{r} + a_{r}}
\]
is given by a formal application of the chain and product rule. As in the classical setting, where $F$ is just one function, we have a \emph{functional equation}
\[
b(S) F^{S} = P F^{S+\textbf{a}}
\]
with $b(S) \in \mathbb{C}[S] := \mathbb{C}[s_{1}, \dots, s_{r}]$ and $P \in \mathscr{D}_{X}[S]$ as well as a local version where each $f_{k}$ is regarded as its germ at $\xpoint.$ Generalizing the Bernstein--Sato polynomial, we have the multivariate \emph{Bernstein--Sato ideal} $B_{F, \mathfrak{x}}^{\textbf{a}}$ which consists of all the polynomials $b(S) \in \mathbb{C}[S]$ satisfying said local functional equation. Explicitly,
\[
B_{F, \mathfrak{x}}^{\textbf{a}} := \{ b(S) \in \mathbb{C}[S] \mid b(S) F^{S} \in \mathscr{D}_{X,\mathfrak{x}}[S] F^{S+ \textbf{a}} \}.
\]
Finally, let $Z(B_{F,\mathfrak{x}}^{\textbf{a}})$ be the zero locus cut out by $B_{F,\mathfrak{x}}^{\textbf{a}}$:
\[
Z(B_{F,\mathfrak{x}}^{\textbf{a}}) := \Variety(\text{rad}(B_{F,\mathfrak{x}}^{\textbf{a}})).
\]

Recently a lot of work has been done on multivariate Bernstein--Sato ideals and their zero loci. Let $\textbf{1} = (1, \dots, 1)$. While it had been shown by Sabbah and Gyoja, cf. \cite{SabbahI, Gyoja}, that the codimension one components of $Z(B_{F,\mathfrak{x}}^{\textbf{1}})$ are rational hyperplanes, Maisonobe proved in \cite{Maisonobe} that any component of larger codimension can be translated into a codimension one component by an element of $\mathbb{Z}^{r}$. He did this by proving many useful homological properties of 
\[
\frac{\mathscr{D}_{X,\mathfrak{x}}[S] F^{S}}{\mathscr{D}_{X,\mathfrak{x}}[S]F^{S+\textbf{1}}}
\]
and analyzing various attached associated graded objects. Using some of these results, in \cite{ZeroLociI} Budur, Van der Veer, Wu, and Zhou completed a proof of a conjecture of Budur's from \cite{BudurLocalSystems}. Namely, they showed that if $F = (f_{1}, \dots, f_{r})$ corresponds to the factorization $f = f_{1} \cdots f_{r}$, then exponentiating $Z(B_{F,\mathfrak{x}}^{\textbf{1}})$ computes the rank one local systems on $X \setminus \Variety(f)$ with nontrivial cohomology. (See also section 3.29 of \cite{BudurLocalSystems}.) This can be viewed as a generalization of Kashiwara and Malgrange's result in \cite{Malgrange}, \cite{Kashiwara} that exponentiating the roots of the Bernstein--Sato polynomial computes the eigenvalues of the algebraic monodromy on nearby Milnor fibers. Indeed, in the multivariate setting, the role of the nearby cycle functor is replaced by the Sabbah specialization complex, cf. \cite{BudurLocalSystems, BudurCohomologySupportLoci, ZeroLociI}. Most of these results have been generalized to $Z(B_{F,\mathfrak{x}}^{\textbf{a}})$ for $\textbf{a} \in \mathbb{N}^{r}$ arbitrary in \cite{ZeroLociII}.

Our main objective is to identify geometric conditions on $\text{Div}(f)$ so that $Z(B_{F,\mathfrak{x}}^{\textbf{a}})$ and/or $B_{F, \mathfrak{x}}^{\textbf{a}}$ has a particularly nice structure. The hypotheses concern the \emph{logarithmic derivations} $\Der_{X}(-\log f)$ of $\text{Div}(f)$, that is, the derivations that preserve $f$, as well as a sort of dual object introduced by Saito in \cite{SaitoLogarithmicForms}: the \emph{logarithmic differential forms} $\Omega_{X}^{\bullet}(\log f)$. We will often assume the following for $\text{Div}(f)$: it is strongly Euler-homogeneous (locally everywhere it has a singular derivation that acts as the identity on $f$); it is Saito-holonomic (the stratification of $X$ by the logarithmic derivations is locally finite); it is tame ($\Omega_{X}^{\bullet}(\log f)$ satisfies a scaling projective dimension bound). This set of conditions was first considered by Walther in \cite{uli} in the univariate setting ($F = (f)$) and we considered them in the multivariate setting ($F = f_{1} \cdots f_{r}; r > 1$) in \cite{Bath1}.

While in general $B_{F,\mathfrak{x}}^{\textbf{a}}$ may not be principal (cf. \cite{BahloulOakuLocalBSIdeals, BrianconMaynadierPrincipality}), using some of our results from \cite{Bath1} as well as a crucial idea of Maisonobe's from \cite{MaisonobeFreeHyperplanes}, we can obtain the following in Corollary \ref{cor- tame implies BS principal}:

\begin{theorem} \label{thm-intro purely codim one}
Suppose that $f$ is strongly Euler-homogenous, Saito-holonomic, and tame, $F$ corresponds to any factorization $f = f_{1} \cdots f_{r}$, and $f_{1}^{a_{1}} \cdots f_{r}^{a_{r}}$ is not a unit. Then $B_{F, \mathfrak{x}}^{\textbf{a}}$ is principal.
\end{theorem}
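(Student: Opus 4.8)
The plan is to show that the $\mathbb{C}[S]$-module $\mathscr{D}_{X,\mathfrak{x}}[S]F^{S}/\mathscr{D}_{X,\mathfrak{x}}[S]F^{S+\mathbf{a}}$ is supported, as a $\mathbb{C}[S]$-module, on a hypersurface — i.e., that $B_{F,\mathfrak{x}}^{\mathbf{a}}$ is an ideal of height one — and then to upgrade "height one" to "principal" using that $\mathbb{C}[S]$ is a UFD together with a purity (unmixedness) statement for the annihilator. The first half should follow the univariate template: one knows a priori that $B_{F,\mathfrak{x}}^{\mathbf{a}}$ is nonzero (existence of the multivariate functional equation, which is standard and cited in the introduction via Sabbah/Gyoja/Maisonobe), so $Z(B_{F,\mathfrak{x}}^{\mathbf{a}})$ has codimension at least one; the content is the reverse inequality. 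Here I would invoke the tameness/strong-Euler-homogeneity/Saito-holonomicity hypotheses exactly as in \cite{Bath1}: these force the relevant $\mathscr{D}_{X}[S]$-module to have a good filtration whose associated graded is controlled by the logarithmic stratification, and the tame projective-dimension bound on $\Omega_{X}^{\bullet}(\log f)$ translates (via the logarithmic comparison-type arguments of \cite{Bath1}) into a bound forcing every associated prime of the quotient module over $\mathbb{C}[S]$ to have height $\le 1$. Combined with nonvanishing, every associated prime is then exactly height one.

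The key step — and I expect it to be the main obstacle — is passing from "the annihilator has height one" to "the annihilator is principal." Height-one ideals in a UFD need not be principal unless they are unmixed of height one (every associated prime height one) with no embedded primes, in which case the ideal is the intersection of height-one primaries, hence principal. So the real work is to prove that $B_{F,\mathfrak{x}}^{\mathbf{a}}$, equivalently $\operatorname{ann}_{\mathbb{C}[S]}\bigl(\mathscr{D}_{X,\mathfrak{x}}[S]F^{S}/\mathscr{D}_{X,\mathfrak{x}}[S]F^{S+\mathbf{a}}\bigr)$, has no embedded primes — i.e., the module has a "pure codimension one" behaviour over $\mathbb{C}[S]$. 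This is precisely where "a crucial idea of Maisonobe's from \cite{MaisonobeFreeHyperplanes}" enters: Maisonobe's homological analysis of the analogous quotient module (for $\mathbf{a}=\mathbf{1}$) produces a holonomic-type duality / Cohen–Macaulayness statement for the module over $\mathscr{D}_{X,\mathfrak{x}}[S]$, from which one extracts that the $\mathbb{C}[S]$-annihilator is unmixed. I would adapt that argument to our hypotheses: use the tame case's control on characteristic varieties to show the quotient module is Cohen–Macaulay of the expected dimension over an appropriate ring, deduce purity of its support, and conclude the $\mathbb{C}[S]$-annihilator is a pure height-one ideal in the UFD $\mathbb{C}[S]$, hence generated by a single $b(S)$.

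Concretely, the steps in order: (1) recall from \cite{Bath1} the good-filtration and associated-graded description of $\mathscr{D}_{X,\mathfrak{x}}[S]F^{S}$ and its quotient by $\mathscr{D}_{X,\mathfrak{x}}[S]F^{S+\mathbf{a}}$ under the three hypotheses, in particular the bound on the characteristic variety coming from tameness; (2) deduce that this quotient is a nonzero $\mathscr{D}_{X,\mathfrak{x}}[S]$-module all of whose $\mathbb{C}[S]$-associated primes have height one; (3) via Maisonobe's duality idea adapted to the tame setting, show this quotient has no embedded associated primes over $\mathbb{C}[S]$, i.e. its $\mathbb{C}[S]$-support is pure of codimension one; (4) conclude that $B_{F,\mathfrak{x}}^{\mathbf{a}} = \operatorname{ann}_{\mathbb{C}[S]}(\text{quotient})$ is an unmixed height-one ideal in the UFD $\mathbb{C}[S] = \mathbb{C}[s_{1},\dots,s_{r}]$, and any such ideal is principal. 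The hypothesis that $f_{1}^{a_{1}}\cdots f_{r}^{a_{r}}$ is not a unit is needed only to guarantee the quotient is genuinely nonzero near $\mathfrak{x}$ (otherwise $B_{F,\mathfrak{x}}^{\mathbf{a}} = \mathbb{C}[S]$, which is also principal but trivially); I would dispatch that edge case first.
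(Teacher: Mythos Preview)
Your overall architecture matches the paper's: establish a Cohen--Macaulay/purity property for $M := \mathscr{D}_{X,\mathfrak{x}}[S]F^{S}/\mathscr{D}_{X,\mathfrak{x}}[S]F^{S+\mathbf{a}}$ using the tame hypotheses and \cite{Bath1}, then leverage Maisonobe's idea from \cite{MaisonobeFreeHyperplanes} to extract principality. But two points in your plan are imprecise or do not go through as stated.

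First, in step (2) you reason about the $\mathbb{C}[S]$-associated primes of $M$. But $M$ is not finitely generated over $\mathbb{C}[S]$, so the usual primary-decomposition machinery is unavailable there. The paper instead works over $\mathscr{D}_{X,\mathfrak{x}}[S]$: using the total order filtration, \cite{Bath1} gives that $\gr^{\sharp}(\ann_{\mathscr{D}_{X,\mathfrak{x}}[S]} F^{S})$ is a Cohen--Macaulay prime of dimension $n+r$, whence both $\mathscr{D}_{X,\mathfrak{x}}[S]F^{S}$ and $\mathscr{D}_{X,\mathfrak{x}}[S]F^{S+\mathbf{a}}$ are $n$-Cohen--Macaulay over $\mathscr{D}_{X,\mathfrak{x}}[S]$; the long exact $\Ext$ sequence plus the grade computation of \cite{ZeroLociII} then makes $M$ an $(n+1)$-Cohen--Macaulay, hence $(n+1)$-pure, $\mathscr{D}_{X,\mathfrak{x}}[S]$-module. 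Combined with relative holonomicity and the projection $p_{2}$, this yields that $Z(B_{M})$ is purely codimension one --- a statement about $\rad(B_{M})$, not about $B_{M}$ itself.

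Second, and this is the real gap, your passage (3)$\to$(4) conflates ``$Z(B_{M})$ is purely codimension one'' (equivalently, ``the $\mathbb{C}[S]$-support of $M$ is pure of codimension one'') with ``$B_{M}$ is an unmixed height-one ideal''. The former only says $\rad(B_{M})$ is principal; it does not exclude, e.g., $B_{M}=(x^{2},xy)\subseteq\mathbb{C}[x,y]$, which has $Z(B_{M})=\{x=0\}$ purely codimension one yet is not principal. Maisonobe's trick is exactly what bridges this, but it is not a direct ``no embedded primes for $B_{M}$'' argument. It is an operational submodule argument exploiting $(n+1)$-purity over $\mathscr{D}_{X,\mathfrak{x}}[S]$: for any generator $b$ of $B_{M}$, factor $b=\beta\alpha$ with $\rad(\mathbb{C}[S]\cdot\beta)=\rad(B_{M})$ and $Z(\alpha)\cap Z(B_{M})$ of codimension $\geq 2$; the $\mathscr{D}_{X,\mathfrak{x}}[S]$-submodule $\beta M\subseteq M$ inherits $(n+1)$-purity and relative holonomicity and is killed by $\alpha$, forcing $Z(B_{\beta M})\subseteq Z(\alpha)\cap Z(B_{M})$ into codimension $\geq 2$, which by purity means $\beta M=0$, i.e.\ $\beta\in B_{M}$. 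A second pass with the $\gcd$ of the cleaned-up generators then shows the $\gcd$ already kills $M$. Once you replace the vague ``show $B_{M}$ unmixed via duality'' step with this concrete mechanism, your plan becomes the paper's proof.
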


This follows from the much more general statement Theorem \ref{thm- purity implies principality} about $(n+1)$-pure (Definition \ref{def-purity}), relative holonomic (Definition \ref{def- relative holonomic}) $\mathscr{D}_{X,\mathfrak{x}}[S]$-modules, that we spotlight since it is of general interest:

\begin{theorem} \label{thm- intro purity implies principality} Suppose that $M$ is a finite $\mathscr{D}_{X,\mathfrak{x}}[S]$-module that is relative holonomic and $(n+1)$-pure. Then its Bernstein--Sato ideal $B_{M}$, i.e. its $\mathbb{C}[S]$-annihilator, is principal.
\end{theorem}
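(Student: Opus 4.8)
The plan is to transfer the hypotheses on $M$ as a $\mathscr{D}_{X,\mathfrak{x}}[S]$-module into a statement about the associated primes of $\mathbb{C}[S]/B_{M}$ and then invoke that $\mathbb{C}[S]$ is a unique factorization domain. If $B_{M} = (0)$ or $B_{M} = \mathbb{C}[S]$ the ideal is already principal, so I may assume $B_{M}$ is proper and nonzero; then the minimal primes of $B_{M}$ have height $\geq 1$, and it is enough to prove that \emph{every associated prime of $\mathbb{C}[S]/B_{M}$ has height $\leq 1$}. Granting this, $B_{M}$ is unmixed of pure height one and has no embedded primes, so a primary decomposition of $B_{M}$ takes the form $\bigcap_{i}(g_{i}^{m_{i}})$ with the $g_{i}$ distinct irreducible polynomials --- each height-one prime of $\mathbb{C}[S]$ is principal, generated by a prime element, and its primary ideals are exactly the powers of that element --- whence $B_{M} = (\prod_{i}g_{i}^{m_{i}})$ is principal.

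For the bookkeeping I would use the relative order filtration on $\mathscr{D}_{X,\mathfrak{x}}[S]$ (full order in the $\mathscr{D}_{X}$-direction, each $s_{k}$ in degree zero), for which $\gr \mathscr{D}_{X,\mathfrak{x}}[S] \cong \mathscr{O}_{X,\mathfrak{x}}[\xi_{1},\dots,\xi_{n},s_{1},\dots,s_{r}]$ is a regular --- hence Cohen--Macaulay and equidimensional --- Noetherian ring of dimension $2n+r$. By the standard filtered-ring formalism (the grade over $\mathscr{D}_{X,\mathfrak{x}}[S]$ agrees with the grade over $\gr \mathscr{D}_{X,\mathfrak{x}}[S]$, which in a Cohen--Macaulay ring equals the codimension), every finite $\mathscr{D}_{X,\mathfrak{x}}[S]$-module $N$ satisfies $j(N) + \dim \Ch_{\rel}(N) = 2n+r$, where $\Ch_{\rel}(N) \subseteq T^{*}X \times \mathbb{C}^{r}$ is the relative characteristic variety. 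Since $M$ is $(n+1)$-pure, every nonzero submodule $N \subseteq M$ has $j(N) = n+1$, i.e.\ $\dim \Ch_{\rel}(N) = n+r-1$; this is the sole place purity is used.

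Now suppose toward a contradiction that $\mathfrak{p} \subseteq \mathbb{C}[S]$ is an associated prime of $\mathbb{C}[S]/B_{M}$ with height $h \geq 2$, and choose $y \in \mathbb{C}[S] \setminus B_{M}$ with $(B_{M}:y) = \mathfrak{p}$. Because $y$ and $\mathfrak{p}$ belong to the central polynomial subring $\mathbb{C}[S]$, the submodule $yM$ (the image of multiplication by $y$ on $M$) is a $\mathscr{D}_{X,\mathfrak{x}}[S]$-submodule of $M$, it is finite over $\mathscr{D}_{X,\mathfrak{x}}[S]$, it is nonzero since $y \notin B_{M}$, and it is annihilated by $\mathfrak{p}$. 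On one hand $yM$ is a nonzero submodule of the $(n+1)$-pure module $M$, so $\dim \Ch_{\rel}(yM) = n+r-1$. On the other hand $yM$, being a submodule of the relative holonomic module $M$, is itself relative holonomic, so $\Ch_{\rel}(yM) \subseteq \bigl(\bigcup_{i}\Lambda_{i}\bigr) \times \mathbb{C}^{r}$ for finitely many Lagrangian subvarieties $\Lambda_{i} \subseteq T^{*}X$ with $\dim \Lambda_{i} = n$; moreover $\mathfrak{p}$ has relative order zero, so its elements are their own principal symbols, and annihilation of $yM$ by $\mathfrak{p}$ forces $\gr(yM)$ to be killed by $\mathfrak{p} \cdot \gr \mathscr{D}_{X,\mathfrak{x}}[S]$, hence $\Ch_{\rel}(yM) \subseteq T^{*}X \times \Variety(\mathfrak{p})$. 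Intersecting the two inclusions gives $\Ch_{\rel}(yM) \subseteq \bigl(\bigcup_{i}\Lambda_{i}\bigr) \times \Variety(\mathfrak{p})$, so
\[
n+r-1 \;=\; \dim \Ch_{\rel}(yM) \;\leq\; n + \dim \Variety(\mathfrak{p}) \;=\; n + (r-h) \;\leq\; n+r-2,
\]
a contradiction. Hence no associated prime of $\mathbb{C}[S]/B_{M}$ has height $\geq 2$, and the first paragraph completes the proof.

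The step I expect to require the most care is this final dimension estimate for $yM$: one must verify that relative holonomicity --- not merely the grade bound extracted from purity --- is inherited by the submodule $yM$ and is compatible with the central ideal $\mathfrak{p}$, so that the Lagrangian bound $\dim \Lambda_{i} = n$ stays available and yields $\dim \Ch_{\rel}(yM) \leq n + (r-h)$ rather than the useless $\dim \Ch_{\rel}(yM) \leq 2n + (r-h)$. Once the relevant closure and symbol properties of relative holonomic modules recalled above are in place, together with the standard homological facts about $\mathscr{D}_{X,\mathfrak{x}}[S]$, the remaining ingredients are the elementary commutative algebra of associated primes and unique factorization used in the first paragraph.
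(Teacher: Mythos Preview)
Your argument is correct and rests on the same mechanism as the paper's: for any nonzero submodule $N\subseteq M$ you combine $(n+1)$-purity (so $\dim\Ch^{\rel}(N)=n+r-1$) with the Lagrangian bound from relative holonomicity to force the $\mathbb{C}[S]$-support of $N$ to be purely of codimension one. The packaging, however, is genuinely different. The paper first proves $Z(B_{M})$ is purely codimension one, then runs a two-pass argument: write each generator $b_{i}=\beta_{i}\alpha_{i}$ with $\rad(\beta_{i})=\rad(B_{M})$ and show $\beta_{i}M=0$; then set $d=\gcd(\beta_{1},\dots,\beta_{t})$ and show $dM=0$. You instead go straight to the associated primes of $\mathbb{C}[S]/B_{M}$: an associated prime $\mathfrak{p}=(B_{M}:y)$ of height $\geq 2$ would make $yM$ a nonzero submodule whose relative characteristic variety is forced into dimension $\leq n+r-2$, contradicting purity. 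This single step replaces both of the paper's passes, and the principality then drops out of primary decomposition in the UFD $\mathbb{C}[S]$. Your route is shorter and more conceptual; the paper's is more explicit about producing the generator.

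One point to tighten: your claim that $\gr^{\rel}\mathscr{D}_{X,\mathfrak{x}}[S]\cong\mathscr{O}_{X,\mathfrak{x}}[\xi_{1},\dots,\xi_{n},s_{1},\dots,s_{r}]$ is ``equidimensional'' in the sense needed for the naive grade--dimension formula is not right---a polynomial ring over a local ring has maximal ideals of different heights (e.g.\ $(x\xi_{1}-1)$ versus $(\mathfrak{m},\xi_{1},\dots)$). The identity $j(N)+\dim\Ch^{\rel}(N)=2n+r$ is nonetheless true here, but it requires the local-analytic argument of Maisonobe recorded as equation~\eqref{eqn- local analytic dimension grade fml} in the paper (see the discussion preceding Theorem~\ref{thm- purity implies principality}), not merely Cohen--Macaulayness of the graded ring. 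Once you cite that, the rest of your proof goes through unchanged.
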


The hurdle becomes verifying that, under the hypotheses of Theorem \ref{thm-intro purely codim one}, 
\[
\mathscr{D}_{X,\mathfrak{x}}[S] F^{S} / \mathscr{D}_{X,\mathfrak{x}}[S]F^{S+\textbf{a}}
\]
satisfies the subtle homological property of purity. In Theorem \ref{thm- F^S / F^S+1 CM} we prove something much stronger: it has only one nonvanishing dual Ext-module, i.e. it is $(n+1)$-Cohen--Macaulay (Definition \ref{def- CM for Auslander regular}).

In Lemma 4.20 of \cite{BudurLocalSystems}, Budur shows that information about the multivariate Bernstein--Sato ideal of a finer factorization of $f$ gives (potentially incomplete) data about the multivariate Bernstein--Sato ideal of a coarser factorization of $f$. Let $B_{f, \mathfrak{x}} \subseteq \mathbb{C}[s]$ be the ideal generated by the usual Bernstein--Sato polynomial of $f$ at $\mathfrak{x}$. Under the hypotheses of Theorem \ref{thm-intro purely codim one} we obtain in Theorem \ref{thm-intersecting with diagonal} a strengthening of Lemma 4.20 in loc. cit. While this result works for general $\textbf{a}$ and various factorizations, here is a special case:

\begin{theorem} \label{thm-intro intersect diagonal}

Suppose that $f$ is strongly Euler-homogenous, Saito-holonomic, and tame and $F$ corresponds to any factorization $f = f_{1} \cdots f_{r}$. Then
\[
Z(B_{f, \mathfrak{x}})= Z(B_{F, \mathfrak{x}}^{\textbf{1}}) \cap \{ s_{1} = \cdots = s_{r} \}.
\]
Here points on the diagonal of $\mathbb{C}^{r}$ are naturally identified with points in $\mathbb{C}$.

\end{theorem}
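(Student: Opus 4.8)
The plan is to deduce the diagonal identity by comparing the two functional equations directly at the level of ideals, using the structure theory for $B_{F,\mathfrak{x}}^{\textbf{1}}$ afforded by the tame/Euler-homogeneous/Saito-holonomic hypotheses. First I would establish the easy inclusion $Z(B_{f,\mathfrak{x}}) \supseteq Z(B_{F,\mathfrak{x}}^{\textbf{1}}) \cap \{s_1 = \cdots = s_r\}$. Given any $b(S) \in B_{F,\mathfrak{x}}^{\textbf{1}}$, we have $b(S) F^S = P \cdot F^{S+\textbf{1}}$ for some $P \in \mathscr{D}_{X,\mathfrak{x}}[S]$; specializing all $s_k$ to a single variable $s$ (i.e. applying the $\mathbb{C}$-algebra map $\mathbb{C}[S] \to \mathbb{C}[s]$, $s_k \mapsto s$, which is compatible with the $\mathscr{D}_{X,\mathfrak{x}}$-action since $F^{S+\textbf{1}}|_{s_k = s} = f^{s+1}$ and $F^S|_{s_k=s} = f^s$) yields $b(s,\dots,s) f^s = P|_{s_k=s}\, f^{s+1}$, so $b(s,\dots,s) \in B_{f,\mathfrak{x}}$. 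Hence the restriction of $B_{F,\mathfrak{x}}^{\textbf{1}}$ to the diagonal is contained in $B_{f,\mathfrak{x}}$, which gives the desired inclusion of zero loci (and in fact the reverse containment of ideals-restricted-to-diagonal, up to radical).

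The reverse inclusion $Z(B_{f,\mathfrak{x}}) \subseteq Z(B_{F,\mathfrak{x}}^{\textbf{1}}) \cap \{s_1 = \cdots = s_r\}$ is the substantive direction and is where I expect the main obstacle to lie. Here I would invoke Theorem \ref{thm-intro purely codim one}: under these hypotheses $B_{F,\mathfrak{x}}^{\textbf{1}}$ is principal, generated by some $b_F(S)$, so $Z(B_{F,\mathfrak{x}}^{\textbf{1}})$ is a hypersurface and $Z(B_{F,\mathfrak{x}}^{\textbf{1}}) \cap \{s_1 = \cdots = s_r\} = Z(b_F(s,\dots,s))$ (provided $b_F$ does not vanish identically on the diagonal, which must be checked — it follows because $b_F(s,\dots,s)$ lies in $B_{f,\mathfrak{x}}$ by the previous paragraph and $B_{f,\mathfrak{x}} \neq 0$). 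So the claim reduces to: $b_F(s,\dots,s)$ and the Bernstein--Sato polynomial $b_f(s)$ have the same roots. One inclusion of roots we already have. For the other, I would combine two inputs: (i) Budur's Lemma 4.20 in \cite{BudurLocalSystems}, which relates the Bernstein--Sato ideal of a finer factorization to that of a coarser one and in particular shows $b_f(s)$ divides (a power of) $b_F(s,\dots,s)$ or controls its roots — giving $Z(b_f) \subseteq Z(b_F(s,\dots,s))$; alternatively (ii) the local-systems interpretation from \cite{ZeroLociI}: exponentiating $Z(B_{F,\mathfrak{x}}^{\textbf{1}})$ computes rank-one local systems on $X \setminus \Variety(f)$ with nonvanishing cohomology, and intersecting with the diagonal corresponds to restricting to the local systems pulled back along $X\setminus\Variety(f) \to (\mathbb{C}^*)^r$ that factor through the total monodromy, which by Kashiwara--Malgrange is exactly what $\exp(Z(B_{f,\mathfrak{x}}))$ computes.

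Concretely, I would carry out the argument via route (i): unwind Budur's Lemma 4.20 to the statement that for any $\lambda \in Z(B_{f,\mathfrak{x}})$ the diagonal point $(\lambda,\dots,\lambda) \in \mathbb{C}^r$ lies in $Z(B_{F,\mathfrak{x}}^{\textbf{1}})$ — this is essentially the content of that lemma, that passing to a coarser factorization can only lose components, never gain roots on the relevant locus — together with the principality of $B_{F,\mathfrak{x}}^{\textbf{1}}$ to ensure $Z(B_{F,\mathfrak{x}}^{\textbf{1}})$ is pure of codimension one so that the set-theoretic intersection with the diagonal behaves as expected (no embedded or excess-dimension pathologies). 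Combining the two inclusions of zero loci gives the equality. The main obstacle is the reverse inclusion: a priori, restricting the defining functional equation to the diagonal could produce a \emph{strictly smaller} zero locus because the operator $P$ is only required to exist off the diagonal, so one genuinely needs the global structural results (principality plus Budur's comparison, or the local systems dictionary) rather than a naive substitution argument — I would flag this and make sure the cited results apply verbatim under "strongly Euler-homogeneous, Saito-holonomic, tame."
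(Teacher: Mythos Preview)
Your argument has a genuine gap: you have the two inclusions swapped, and as a result you prove the easy direction twice and never address the hard one. The specialization argument in your first paragraph shows that the image of $B_{F,\mathfrak{x}}^{\textbf{1}}$ under $s_k \mapsto s$ lands in $B_{f,\mathfrak{x}}$, and taking zero loci this gives $Z(B_{f,\mathfrak{x}}) \subseteq Z(B_{F,\mathfrak{x}}^{\textbf{1}}) \cap \{s_1=\cdots=s_r\}$, not $\supseteq$. This is exactly Budur's Lemma 4.20 in \cite{BudurLocalSystems} (for $H=(f)$), which you then invoke again in route (i) as if it were the missing inclusion. So both of your arguments establish $\Delta(Z(B_{f,\mathfrak{x}})) \subseteq Z(B_{F,\mathfrak{x}}^{\textbf{1}})$, and neither touches the substantive direction $Z(B_{F,\mathfrak{x}}^{\textbf{1}}) \cap \{s_1=\cdots=s_r\} \subseteq Z(B_{f,\mathfrak{x}})$. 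Principality of $B_{F,\mathfrak{x}}^{\textbf{1}}$ alone does not help here: knowing that the generator $b_F(s,\dots,s)$ lies in $B_{f,\mathfrak{x}}$ says only that $b_f \mid b_F(s,\dots,s)$, which again is the easy direction. Your route (ii) via local systems also falls short, since the results of \cite{ZeroLociI} and Kashiwara--Malgrange determine these loci only after exponentiation, so at best you would get equality modulo $\mathbb{Z}$.

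The paper's proof is organized around a different mechanism and uses the hypotheses in an essential way that your sketch does not. The $(n+1)$-Cohen--Macaulay property of $\mathscr{D}_{X,\mathfrak{x}}[S]F^S/\mathscr{D}_{X,\mathfrak{x}}[S]F^{S+\textbf{1}}$ (Theorem \ref{thm- F^S / F^S+1 CM}) feeds into Proposition 3.4.3 of \cite{ZeroLociI}, which says that $A \in Z(B_{F,\mathfrak{x}}^{\textbf{1}})$ if and only if the specialization
\[
\frac{\mathscr{D}_{X,\mathfrak{x}}[S]F^S}{\mathscr{D}_{X,\mathfrak{x}}[S]F^{S+\textbf{1}}} \otimes_{\mathbb{C}[S]} \frac{\mathbb{C}[S]}{(S-A)}
\]
is nonzero; equivalently, if and only if the map $\nabla_{A+\textbf{1}}^{\textbf{1}}$ fails to be surjective. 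The key point (Proposition \ref{prop-nabla commutative diagram}) is that because $\ann_{\mathscr{D}_{X,\mathfrak{x}}[S]}F^S$ and $\ann_{\mathscr{D}_{X,\mathfrak{x}}[s]}f^s$ are both generated by derivations (Theorem \ref{thm- gen by derivations}), the modules $\mathscr{D}_{X,\mathfrak{x}}[S]F^S/(S-\Delta(a))$ and $\mathscr{D}_{X,\mathfrak{x}}[s]f^s/(s-a)$ are canonically isomorphic, and the $\nabla$ maps for $F$ and for $f$ fit into a commutative square with these isomorphisms. Hence surjectivity of one $\nabla$ is equivalent to surjectivity of the other, giving both inclusions at once. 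This is where the hypotheses are truly used; there is no way to extract the hard inclusion from Budur's lemma or from principality alone.
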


This has many applications for arrangements. In Corollary \ref{cor-free arrangements} we finish the computation of $Z(B_{F,0}^{\textbf{1}})$ and $Z(B_{f,0})$ for $f$ a central, possibly non-reduced, free arrangement and $F$ an arbitrary factorization by covering the subset of non-reduced cases not included in Theorem 1.4 of \cite{Bath2}. This finishes our certification, started in loc. cit., that the roots of the Bernstein--Sato polynomial of a free hyperplane are always combinatorial, regardless of any reducedness assumption. In Example \ref{ex-not combinatorial} we extend Walther's result that the b-function of a hyperplane arrangement is not necessarily combinatorial to the multivariate case. In Corollary \ref{cor- multivariate root bounds} we extend Saito's bounds on $Z(B_{f,0})$ from Theorem 1 of \cite{SaitoArrangements} to the tame, multivariate setting. The later bounds are somewhat precise since the hyperplanes in $Z(B_{F,0})$ corresponding to roots of $Z(B_{f,0}) \cap [-1,0)$ are combinatorially determined by Theorem 4.11, Theorem 1.3 of \cite{Bath2}.

As this paper was being completed, Wu proved, using different methods, the same diagonal result from Theorem \ref{thm-intro intersect diagonal} under more general hypotheses in Theorem 1.1 of \cite{WuHyperplanes}. While ours hypotheses are stronger, they are more geometric and verifiable; moreover, that ours assumptions imply his is not at all obvious, cf. Theorem \ref{thm- F^S / F^S+1 CM}. Using this diagonalization, he considered central, reduced, free arrangements and recovered in Theorem 1.2 of \cite{WuHyperplanes} a special case of our formula for $Z(B_{f,0})$ that originally appeared in Theorem 1.4 of \cite{Bath2}. 


Additionally, essentially because hyperplane arrangements have a factorization $F$ into linear forms, the Bernstein--Sato ideal attached to this special factorization has a particularly good property:

\begin{theorem} \label{thm-intro principal ideal}
Suppose that $f$ is a tame, possibly non-reduced hyperplane arrangement, $F$ corresponds to a factorization of $f$ into linear forms, and $f_{1}^{a_{1}} \cdots f_{r}^{a_{r}}$ is not a unit. Then $B_{F}^{\textbf{a}} = \rad (B_{F}^{\textbf{a}})$.
\end{theorem}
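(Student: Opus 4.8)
The plan is to combine principality with the arrangement‑theoretic estimate from earlier in the paper, reducing the statement to the fact that, for a factorization into linear forms, the natural annihilating polynomial is manifestly square‑free.

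First I would reduce to showing that the generator of $B_{F}^{\textbf{a}}$ is square‑free. A tame hyperplane arrangement is automatically strongly Euler‑homogeneous (a suitable multiple of the Euler derivation of each localized central subarrangement, extended by zero transverse to its center, acts as the identity on a local equation) and Saito‑holonomic (its logarithmic stratification is the finite lattice of edges), so Theorem~\ref{thm-intro purely codim one}, proved in the body as Corollary~\ref{cor- tame implies BS principal}, applies: since $f_{1}^{a_{1}}\cdots f_{r}^{a_{r}}$ is not a unit, $B_{F}^{\textbf{a}} = (b(S))$ is principal. As $\mathbb{C}[S]$ is a unique factorization domain, $(b(S))$ is radical exactly when $b(S)$ is square‑free, so it suffices to exhibit a square‑free element of $B_{F}^{\textbf{a}}$; then $b(S)$ divides it and we are done. (One may first pass to a local, central picture: $B_{F}^{\textbf{a}} = \bigcap_{\xpoint} B_{F,\xpoint}^{\textbf{a}}$, an intersection of radical ideals is radical, and at a point $\xpoint$ the factors that are units there drop out while a transverse smooth factor contributes nothing, reducing to central, essential subarrangements; this is not strictly needed if the estimate is available globally.)

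Next I would invoke the zero‑locus estimate for arrangements, proved earlier for arbitrary factorizations. Unwinding its proof, it constructs an explicit $b_{0}(S) \in B_{F}^{\textbf{a}}$ of the shape $b_{0}(S) = \prod_{W}\prod_{j \in J_{W}} \bigl( L_{W}(S) + j \bigr)$, where $W$ runs over the dense edges of $\Div(f)$, $L_{W}(S) = \sum_{k\,:\,f_{k}|_{W} \equiv 0} s_{k}$ is the sum of the variables attached to the linear forms vanishing on $W$, and each $J_{W}$ is a finite set of positive integers determined by $\textbf{a}$ and the combinatorics of the subarrangement through $W$. The point that makes $b_{0}$ square‑free, and that uses precisely that $F$ factors $f$ into \emph{linear} forms, is that the $L_{W}$ are pairwise non‑proportional: a dense edge $W$ is recovered from its index set $\{ k : f_{k}|_{W} \equiv 0 \}$ as the common zero of the corresponding $f_{k}$, so distinct dense edges have distinct index sets and hence $L_{W}, L_{W'}$ have distinct supports; and for fixed $W$, distinct $j \in J_{W}$ give linear forms differing by a nonzero constant, hence coprime. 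Thus $b_{0}(S)$ is a product of pairwise coprime irreducibles, i.e.\ square‑free, and since $b(S) \mid b_{0}(S)$ we conclude $B_{F}^{\textbf{a}} = (b(S)) = \rad(B_{F}^{\textbf{a}})$.

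The hard part is not this final bookkeeping but the estimate itself — that the tightly controlled, square‑free $b_{0}(S)$ really lies in $B_{F}^{\textbf{a}}$. This is where tameness is essential: it makes $\Omega_{X}^{\bullet}(\log f)$, equivalently the logarithmic Spencer complex, a resolution, which is what permits building a functional equation $b_{0}(S)F^{S} = PF^{S+\textbf{a}}$ with $b_{0}$ no larger than claimed, one dense edge at a time. And factoring all the way into linear forms is what keeps each dense edge's contribution confined to its own linear factor, rather than letting several contributions coalesce into a higher‑multiplicity root of a single‑variable $b$‑function — as already happens for the coarsest factorization, e.g.\ $f = xy$ with $b_{f}(s) = (s+1)^{2}$. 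If the estimate were available only as a set‑theoretic inclusion for $Z(B_{F}^{\textbf{a}})$, the multiplicities of $b(S)$ would instead have to be controlled by hand: localizing at generic points of the positive‑dimensional edges degenerates the arrangement to smaller tame subarrangements, which one handles by induction on the rank, leaving only the linear forms $\sum_{k} s_{k} + j$ supported at the cone point, whose multiplicity one would be forced using the $(n+1)$‑Cohen--Macaulayness of $\mathscr{D}_{X}[S]F^{S}/\mathscr{D}_{X}[S]F^{S+\textbf{a}}$ from Theorem~\ref{thm- F^S / F^S+1 CM} to preclude nilpotent behaviour along those hyperplanes — and that last step would be the genuine obstacle on that route.
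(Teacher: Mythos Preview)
Your proof is correct and uses the same two ingredients as the paper: Proposition~\ref{prop- nice element BS ideal} (a square-free element of $B_F^{\textbf{a}}$ when $F$ factors into linear forms) together with the purity machinery. Your packaging is marginally cleaner --- you cite principality (Corollary~\ref{cor- tame implies BS principal}) and note the generator must divide the square-free element; the paper instead takes the generator $\gamma$ of the radical and re-runs the purity argument from Theorem~\ref{thm- purity implies principality} to show that $\gamma$ already annihilates the quotient.

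One correction to your third paragraph: the construction of $b_0(S)$ (Proposition~\ref{prop- nice element BS ideal}) does \emph{not} require tameness --- it holds for any central arrangement, the functional equation being built by induction on rank using only the Euler operator and the lattice structure. Tameness enters solely through the $(n+1)$-Cohen--Macaulayness of $\mathscr{D}_{X,0}[S]F^S/\mathscr{D}_{X,0}[S]F^{S+\textbf{a}}$ (Theorem~\ref{thm- F^S / F^S+1 CM}), which yields purity and hence principality.
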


Finally we consider central, reduced generic arrangements $f$ of $d$ hyperplanes in $\mathbb{C}^{n}$. In \cite{WaltherGeneric}, Walther obtained a formula for the Bernstein--Sato polynomial for $f$. For $F$ a factorization into linear forms, Maisonobe found in \cite{MaisonobeGeneric} an element of the global Bernstein--Sato ideal $B_{F}^{\textbf{1}}$ for $d > n+1$ and computed this ideal for $d = n+1$. However, completely computing this ideal for $d > n+1$ remained open. Using Walther's formula, in Theorem \ref{thm- generic BS ideal formulae}, we consider any $d > n$, independently verify Maisonobe's analysis of the $d=n+1$ case, and explicitly determine this ideal for any $d > n$: 

\begin{theorem} \label{thm-intro my generic formulae}
Let $f$ be the reduced defining equation of a central, generic hyperplane arrangement in $\mathbb{C}^{n}$ with $d = \deg f > n$. If $F$ corresponds to a factorization of $f$ into irreducibles, then 
\begin{equation} \label{eqn-intro my generic BS generator}
B_{F}^{\textbf{1}} = \mathbb{C}[S] \cdot \prod_{k=1}^{d} (s_{k} + 1) \prod_{i=0}^{2d - n -2}(\sum_{k=1}^{d} s_{k} + i + n).
\end{equation}
If $F=(f_{1}, \dots, f_{r})$ corresponds to some other factorization, then $B_{F}^{\textbf{1}}$ is principal and
\begin{equation} \label{eqn-intro my generic BS zero locus}
Z(B_{F}^{\textbf{1}}) = \left(\bigcup_{k=1}^{r} \{ s_{k} + 1 = 0 \}\right) \bigcup \left( \bigcup_{i = 0}^{2d - n - 2} \{ \sum_{k=1}^{r} d_{k}s_{k} + i + n = 0 \} \right).
\end{equation}
\end{theorem}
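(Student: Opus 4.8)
The plan is to reduce to the case where $F$ is the factorization of $f$ into linear forms and then lift Walther's univariate formula for $b_f$ to the whole ideal using the structural results of this paper. Since $f$ is reduced its irreducible factors are the $d$ distinct linear forms $\ell_1,\dots,\ell_d$ cutting out the hyperplanes, so put $G=(\ell_1,\dots,\ell_d)$; any other $F=(f_1,\dots,f_r)$, with $f_k=\prod_{j\in I_k}\ell_j$ for a partition $\{1,\dots,d\}=\bigsqcup_kI_k$, is a coarsening of $G$. A central generic arrangement with $d>n$ is strongly Euler-homogeneous, Saito-holonomic, and tame (away from the origin it is locally a normal crossing divisor, since a point $\mathfrak{x}\neq 0$ lies on at most $n-1$ of the $\ell_k$ in general position; tameness at the origin is the known one for generic arrangements). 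Hence Theorem~\ref{thm-intro purely codim one} makes every $B_F^{\mathbf 1}$ principal and Theorem~\ref{thm-intro principal ideal} makes $B_G^{\mathbf 1}$ radical; also, the arrangement being central, $B_F^{\mathbf 1}$ agrees with its localization at the origin. Finally, by Theorem~\ref{thm-intersecting with diagonal} applied to the coarsening of $G$ to $F$, $Z(B_F^{\mathbf 1})$ is the intersection of $Z(B_G^{\mathbf 1})$ with $\{\,s_j=s_{j'}:j,j'\in I_k\,\}$; substituting $s_j=s_k$ for $j\in I_k$ turns formula~\eqref{eqn-intro my generic BS zero locus} for $G$ into formula~\eqref{eqn-intro my generic BS zero locus} for $F$ (the partial sums become $d_ks_k$). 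Thus it suffices to prove~\eqref{eqn-intro my generic BS generator}, i.e.\ to compute $B_G^{\mathbf 1}$.

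Write $B_G^{\mathbf 1}=(h)$ with $h$ squarefree. Then $Z(B_G^{\mathbf 1})=\Variety(h)$ is a hypersurface whose components are rational hyperplanes by \cite{SabbahI,Gyoja}, and $h$ is the product of the corresponding linear forms. To control which hyperplanes occur I use the estimates for $Z(B_F^{\mathbf a})$ of tame arrangements established earlier (in particular Corollary~\ref{cor- multivariate root bounds}) together with the observation that the only dense edges of a central generic arrangement with $d>n$ are the $d$ hyperplanes and the origin --- each flat of codimension $2\le c\le n-1$ is an intersection of $c$ of the $\ell_k$ in general position, hence a decomposable localized arrangement. Consequently every component of $\Variety(h)$ is either $\{s_k+1=0\}$, coming from $H_k$ (the only possible value being $1$ since a generic point of $H_k$ is a smooth point of $\Div(f)$), or $\{\,\sum_{k=1}^d s_k+c=0\,\}$ for an integer $c$, coming from the origin. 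So $h=\prod_{k\in A}(s_k+1)\cdot\prod_{c\in C}(\sum_k s_k+c)$ for some $A\subseteq\{1,\dots,d\}$ and finite $C\subset\mathbb{Z}$, and I must pin down $A$ and $C$.

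First, $A=\{1,\dots,d\}$: at a generic point $\mathfrak{x}$ of $H_k$ all $\ell_j$, $j\neq k$, are units, so $B_{G,\mathfrak{x}}^{\mathbf 1}=(s_k+1)$, and $B_G^{\mathbf 1}\subseteq B_{G,\mathfrak{x}}^{\mathbf 1}$ forces $(s_k+1)\mid h$. Next, for $C$ I combine the diagonal with Walther: Theorem~\ref{thm-intro intersect diagonal} gives $Z(B_G^{\mathbf 1})\cap\{s_1=\dots=s_d\}=Z(B_{f,0})$, and plugging in Walther's formula $b_f(s)=(s+1)^{n-1}\prod_{j=n}^{2d-2}(s+\tfrac jd)$ from \cite{WaltherGeneric} identifies the right-hand side as $\{-1\}\cup\{-\tfrac jd:n\le j\le 2d-2\}$; matching this against $\{-1\}\cup\{-\tfrac cd:c\in C\}$ forces $\{n,\dots,2d-2\}\setminus\{d\}\subseteq C\subseteq\{n,\dots,2d-2\}$. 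The one remaining question --- whether the hyperplane $\{\sum_k s_k+d=0\}=\{\sum_k(s_k+1)=0\}$ through $(-1,\dots,-1)$ is a component --- is exactly what the main diagonal cannot see, and I expect this to be the crux. Maisonobe's explicit element of $B_G^{\mathbf 1}$ from \cite{MaisonobeGeneric} is $\prod_k(s_k+1)\prod_{i=0}^{2d-n-2}(\sum_k s_k+i+n)$, which is divisible by $h$ and so confirms $C\subseteq\{n,\dots,2d-2\}$ from above; the problem is thus reduced to showing that this element is actually a generator, i.e.\ that no factor of it can be dropped. This last lower bound at the missing hyperplane will be obtained by a dedicated argument --- reconstructing the explicit structure of $\mathscr{D}_X[S]F^S/\mathscr{D}_X[S]F^{S+\mathbf 1}$ for generic arrangements in the spirit of Walther and Maisonobe, or comparing $\mathrm{Exp}(Z(B_G^{\mathbf 1}))$ with the rank-one characteristic variety of the complement via \cite{ZeroLociI} --- and for $d=n+1$ it is precisely Maisonobe's complete computation of the ideal, which is thereby recovered.

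With $A=\{1,\dots,d\}$ and $C=\{n,\dots,2d-2\}$ established, radicality of $B_G^{\mathbf 1}$ yields $B_G^{\mathbf 1}=\mathbb{C}[S]\cdot\prod_{k=1}^d(s_k+1)\prod_{i=0}^{2d-n-2}(\sum_k s_k+i+n)$, which is~\eqref{eqn-intro my generic BS generator}; the zero-locus formula~\eqref{eqn-intro my generic BS zero locus} for an arbitrary factorization then follows from the sub-diagonal reduction of the first paragraph, principality of $B_F^{\mathbf 1}$ being Theorem~\ref{thm-intro purely codim one}. The principal obstacle, as flagged, is verifying that the boundary hyperplane $\{\sum_k(s_k+1)=0\}$ genuinely occurs: it is invisible on the diagonal and must be produced directly rather than by specialization.
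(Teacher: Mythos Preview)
Your approach is essentially the paper's, and everything is fine up to the point you flag as the crux. But the ``missing'' hyperplane $\{\sum_k s_k + d = 0\}$ is not actually missing: it is already supplied by the \emph{lower} bound \eqref{eqn-multivariate root subset} in Corollary~\ref{cor- multivariate root bounds}, which you cite but seem to use only for the upper bound on the shape of the components. Applied to the unique rank-$n$ indecomposable edge (the origin, with $d_X=d$ and $r(X)=n$), that inclusion gives
\[
\{\textstyle\sum_k s_k + n + \ell = 0\}\subseteq Z(B_G^{\mathbf 1})\quad\text{for all }0\le \ell\le d-1,
\]
i.e.\ $\{n,n+1,\dots,n+d-1\}\subseteq C$. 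Since $n\le d\le n+d-1$, the value $c=d$ is already in this range, so the hyperplane through $(-1,\dots,-1)$ is forced without any dedicated argument, without Maisonobe's explicit element, and without appealing to the cohomology support locus via \cite{ZeroLociI}. The diagonal together with Walther's formula then pins down the remaining part $I\subseteq\{d,\dots,2d-n-2\}$ exactly as you say, because those values give roots $-\tfrac{i+n}{d}$ strictly less than $-1$ and hence distinct from everything already accounted for.

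This is precisely how the paper proceeds: Corollary~\ref{cor- multivariate root bounds} gives the expression \eqref{eqn-generic first estimate gamma} for $\gamma(S)$ with the factors for $0\le\ell\le d-1$ already present and only $I\subseteq\{d,\dots,2d-n-2\}$ unknown; then the diagonal plus \eqref{eqn-walther generic b-function} forces $I$ to be the whole interval. So your proof becomes complete (and matches the paper's) once you invoke both halves of Corollary~\ref{cor- multivariate root bounds}; the ``dedicated argument'' you anticipate is unnecessary.
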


In Appendix A we document some basic properties of the logarithmic differential forms for non-reduced divisors. This is our attempt to fill an apparent gap in the literature, as previous texts seem to focus only on the reduced case. 

We would like to thank Luis Narv{\'a}ez Macarro, Uli Walther, Harrison Wong, and Lei Wu for helpful comments and conversations, as well as the referee for his/her suggestions that helped clarify and improve the paper.

\section{Preliminaries}

Here we catalogue the various facts needed to prove the paper's first set of results. The three subsections are divided up as follows: first, we collect basic lemmas about Auslander regular rings from \cite{Bjork, ZeroLociI}; second, we review two different filtrations on $\mathscr{D}_{X}[S]$ as well as a fundamental structure theorem from \cite{ZeroLociI, ZeroLociII}; third, we introduce the geometric hypotheses on \text{Div}(f) we assume later in the paper. 

\subsection{Lemmas about Auslander Regular Rings} \text{ }

We consider (noncommutative) rings $A$ that are positively filtered such that $\gr A$ (the associated graded ring induced by the positive filtration) is a commutative, Auslander regular, Noetherian ring. By A.III.1.26 and A.IV.4.15 of \cite{Bjork} this implies $A$ is Zariskian filtered and Auslander regular. Let $M$ be a nonzero finitely generated left $A$-module. In this subsection we are mostly interested in the $A$-module $\Ext_{A}^{k}(M,A)$ as well as the interplay between $M$ and $\gr M$ (for a choice of good filtration on $M$). Note that because $A$ is Zariskian filtered, if $M$ is nonzero then $\gr M$ is also nonzero (for any good filtration on $M$).

Before embarking let us emphasize the utility: both the \emph{total order filtration} or \emph{relative order filtration} of the next subsection are positive filtrations on $\mathscr{D}_{X,\mathfrak{x}}[S]$ whose associated graded objects are commutative, Auslander regular, and Noetherian. So the facts below are germane.

\begin{define} \label{def-purity}
For $A$ an Auslander regular ring and $0\neq M$ a finitely generated $A$-module, the \emph{grade} of $M$ is the smallest integer $k$ such that $\Ext_{A}^{k}(M,A) \neq 0.$ We say $M$ is \emph{pure} of grade $j(M)$ if every nonzero submodule of $M$ has grade $j(M)$. 
\end{define}

The grade can be computed on the associated graded side:

\begin{proposition} \normalfont{(A.IV.4.15 \cite{Bjork})} \label{prop- M pure grade is associated graded grade}
Let $A$ be a positively filtered ring such that $\gr A$ is a commutative, Noetherian, and Auslander regular ring and further assume that $0 \neq M$ a finitely generated left $A$-module. Then, for any good filtration on $M$, $j(M) = j(\gr M).$
\end{proposition}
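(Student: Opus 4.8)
The plan is to compare the two $\Ext$-computations through a strict filtered free resolution of $M$ and the spectral sequence it produces. First I would fix a good filtration on $M$ and choose a resolution $\cdots \to L_{1} \to L_{0} \to M \to 0$ by finitely generated filtered-free $A$-modules whose boundary maps are strict; such a resolution exists because $A$ is Zariskian (good filtrations lift, and kernels and images of strict maps carry induced good filtrations), and applying $\gr$ to it yields a free resolution $\cdots \to \gr L_{1} \to \gr L_{0} \to \gr M \to 0$ over the commutative Noetherian ring $\gr A$. Dualizing into $A$ produces a cochain complex $C^{\bullet} = \Hom_{A}(L_{\bullet}, A)$ of filtered $A$-modules with $H^{k}(C^{\bullet}) = \Ext_{A}^{k}(M, A)$, and since the $L_{k}$ are finite free this complex is again filtered-free with $\gr C^{\bullet} = \Hom_{\gr A}(\gr L_{\bullet}, \gr A)$, hence $H^{k}(\gr C^{\bullet}) = \Ext_{\gr A}^{k}(\gr M, \gr A)$. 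Because each $L_{k}$ is finitely generated over the Zariskian ring $A$, the induced filtrations on the $C^{k}$ are separated, exhaustive, and bounded in each degree, so the spectral sequence of the filtered complex $C^{\bullet}$ is bounded, strongly convergent, and has $E_{1}$-term in cohomological degree $k$ equal to $\Ext_{\gr A}^{k}(\gr M, \gr A)$, abutting to $\Ext_{A}^{k}(M, A)$ with its induced (separated) filtration.

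Convergence then shows that $\gr \Ext_{A}^{k}(M, A)$ is, for each $k$, a subquotient of $\Ext_{\gr A}^{k}(\gr M, \gr A)$. Since $A$ is Zariskian, a finitely generated $A$-module with vanishing associated graded is itself zero, so $\Ext_{\gr A}^{k}(\gr M, \gr A) = 0$ forces $\Ext_{A}^{k}(M, A) = 0$; applying this for every $k < j(\gr M)$ yields $j(M) \geq j(\gr M)$. This direction is soft and uses only that $\gr A$ is Noetherian and that $A$ is Zariskian.

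For the reverse inequality I would set $j := j(\gr M)$ and examine total degree $j$. Since the $E_{1}$-term vanishes in all cohomological degrees below $j$, every differential whose target sits in total degree $j$ emanates from total degree $j - 1$ and so is zero; hence on every page the total-degree-$j$ part of the spectral sequence only shrinks by passing to kernels of outgoing differentials into total degree $j + 1$. The task is thus to see that the nonzero module $\Ext_{\gr A}^{j}(\gr M, \gr A)$ is not completely destroyed by these outgoing differentials, and this is precisely where Auslander regularity enters: the lowest nonvanishing $\Ext$-module over an Auslander regular ring is pure of its grade, and tracking this purity through the bounded spectral sequence shows $E_{\infty}$ is nonzero in total degree $j$. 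Therefore $\Ext_{A}^{j}(M, A) \neq 0$, i.e. $j(M) \leq j(\gr M)$, and combined with the previous paragraph, $j(M) = j(\gr M)$.

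I expect this last step to be the main obstacle. The formal machinery of a convergent filtered-complex spectral sequence gives only the subquotient bound of the second paragraph, hence only one of the two inequalities, and one truly needs an Auslander-type input to see that the minimal-degree class over $\gr A$ survives to the abutment. A cleaner way to package exactly this point is to work with the Rees ring $\widetilde{A} = \bigoplus_{i} F_{i} A \cdot t^{i}$ and the Rees module $\widetilde{M}$ of $(M, F)$: the Zariskian hypothesis makes the good filtration separated, so $t$ is a nonzerodivisor on both $\widetilde{A}$ and $\widetilde{M}$; reducing modulo $t$ via $0 \to \widetilde{M} \xrightarrow{t} \widetilde{M} \to \gr M \to 0$ and $0 \to \widetilde{A} \xrightarrow{t} \widetilde{A} \to \gr A \to 0$ relates $j(\gr M)$ to $j(\widetilde{M})$, while exactness of localization at $t$ together with $\widetilde{A}[t^{-1}] \cong A[t,t^{-1}]$ and $\widetilde{M}[t^{-1}] \cong M[t,t^{-1}]$ relates $j(\widetilde{M})$ to $j(M)$; here too the equality hinges on the lowest $\Ext$-module of the $t$-torsionfree module $\widetilde{M}$ over the Auslander regular ring $\widetilde{A}$ having no $t$-power-torsion, which is the same purity statement in disguise.
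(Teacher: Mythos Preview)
The paper does not prove this proposition at all: it is stated with the attribution ``(A.IV.4.15 \cite{Bjork})'' and used as a black box. So there is no proof in the paper to compare your attempt against; you are reconstructing what Bj\"ork does.

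Your outline is the right one, and the inequality $j(M) \geq j(\gr M)$ is handled correctly (indeed, the subquotient statement you use is precisely Lemma~\ref{lemma-good filtration on ext} in the paper). The weak point is your third paragraph. You say that purity of $\Ext_{\gr A}^{j}(\gr M, \gr A)$ plus ``tracking this purity through the bounded spectral sequence'' shows $E_{\infty}$ is nonzero in degree $j$, but this is the whole content of the argument and you have not actually given it. Purity alone tells you that every nonzero submodule of $\Ext_{\gr A}^{j}(\gr M, \gr A)$ has grade $j$; it does not by itself prevent an outgoing differential from being injective. What you need is the Auslander condition on $\gr A$ applied one degree higher: any subquotient of $\Ext_{\gr A}^{j+1}(\gr M, \gr A)$ has grade at least $j+1$. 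Now argue by induction on the page. Since all $E_{1}$-terms below degree $j$ vanish, each $E_{r}$ in degree $j$ is a submodule of $\Ext_{\gr A}^{j}(\gr M, \gr A)$ and hence, if nonzero, has grade exactly $j$. The outgoing $d_{r}$ lands in a subquotient of $\Ext_{\gr A}^{j+1}(\gr M, \gr A)$, so its image has grade at least $j+1$; were $d_{r}$ injective on the degree-$j$ piece, that piece would inherit grade $\geq j+1$, a contradiction. Thus the kernel is nonzero at every page, and by bounded convergence $E_{\infty}$ is nonzero in degree $j$. This is the missing sentence, and once you insert it the spectral-sequence proof is complete. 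Your Rees-ring reformulation in the last paragraph is a legitimate alternative and is in fact how some standard references organize the same idea; the $t$-torsionfreeness of the lowest $\Ext$ of $\widetilde{M}$ over $\widetilde{A}$ is again exactly the Auslander condition, as you correctly note.
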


The following lemma gives criteria to check purity:

\begin{lemma} \label{lemma-criterion for purity} \normalfont{ (A.IV.2.6, A.IV.4.11 \cite{Bjork})}
For $A$ an Auslander regular ring and $0 \neq M$ a finitely generated left $A$-module, then
\begin{enumerate}[(i)]
\item $\text{\normalfont Ext}_{A}^{j(M)}(M, A)$ is a pure $A$-module of grade $j(M)$;
\item $M$ is a pure $A$-module of grade $j(M)$ if and only if $\text{\normalfont Ext}_{A}^{k}(\text{\normalfont Ext}_{A}^{k}(M,A) = 0$ for all $k \neq j(M)$.
\item Suppose further that $A$ is positively filtered such that $\gr A$ is commutative, Noetherian, and Auslander regular. If $M$ is a pure $A$-module, then there exists a good filtration on $M$ such that $\gr M$ is a pure $\gr A$-module of grade $j(M)$.
\end{enumerate}
\end{lemma}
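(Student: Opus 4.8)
The plan is to derive all three parts from a single tool: the biduality spectral sequence for an Auslander regular ring. Since $A$ is Gorenstein of finite global dimension, it is its own dualizing complex, so $R\Hom_A(R\Hom_A(M,A),A)\simeq M$ for every finitely generated $M$, and the hyperext spectral sequence of this composite reads
\[
E_2^{p,q}=\Ext_A^{p}\bigl(\Ext_A^{-q}(M,A),A\bigr)\ \Longrightarrow\ \begin{cases}M,&p+q=0,\\ 0,&p+q\neq 0.\end{cases}
\]
The second ingredient is the Auslander condition itself: for every finitely generated $N$ and every $i$, every submodule of $\Ext_A^{i}(N,A)$ has grade $\ge i$; in particular $\Ext_A^{p}(\Ext_A^{m}(M,A),A)=0$ for $p<m$, and $\Ext_A^{m}(M,A)=0$ for $m<j:=j(M)$. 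Hence the $E_2$-page is supported in the region $\{p+q\ge 0,\ -q\ge j\}$, so no differential hits the anti-diagonal $p+q=0$; consequently each $E_\infty^{p,-p}$ is a \emph{submodule} of $E_2^{p,-p}=\Ext_A^{p}(\Ext_A^{p}(M,A),A)$, and the abutment filtration exhibits $M$ as built from these pieces, the bottom one being $E_\infty^{j,-j}\hookrightarrow \Ext_A^{j}(\Ext_A^{j}(M,A),A)$ and all higher ones of grade $>j$. Passing through the truncation triangle $\Ext_A^{j}(M,A)[-j]\to R\Hom_A(M,A)\to\tau_{>j}R\Hom_A(M,A)$ and applying $R\Hom_A(-,A)$ refines this into the statement I will use repeatedly: there is a natural biduality map $\theta_M\colon M\to \Ext_A^{j}(\Ext_A^{j}(M,A),A)$ whose kernel and cokernel have grade $>j$.

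For (i), set $P=\Ext_A^{j}(M,A)$. The Auslander condition gives $j(P)\ge j$, while $\theta_M$ forces $\Ext_A^{j}(P,A)\neq 0$ (else $M=\ker\theta_M$ would have grade $>j$, contradicting $j(M)=j$), so $j(P)=j$. It remains to rule out a nonzero submodule $L\subseteq P$ with $j(L)>j$: here I would run the abutment analysis with $P$ in place of $M$, noting that the tail $P_{j+1}$ of the resulting filtration is a canonical submodule containing every grade-$>j$ submodule of $P$, and that $\theta_P$ together with the vanishing $\Ext_A^{<j}(\Ext_A^{j}(P,A),A)=0$ forces $P_{j+1}=0$, so no such $L$ exists. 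Part (ii) is then formal bookkeeping with the same spectral sequence and (i): if $\Ext_A^{k}(\Ext_A^{k}(M,A),A)=0$ for all $k\ne j$, the anti-diagonal collapses and $M\cong E_\infty^{j,-j}$ embeds in $\Ext_A^{j}(\Ext_A^{j}(M,A),A)=\Ext_A^{j}(P,A)$, which is pure of grade $j$ by (i), hence $M$ is; conversely, if $M$ is pure, then for $k<j$ the vanishing is trivial, while for $k>j$ a nonzero $\Ext_A^{k}(\Ext_A^{k}(M,A),A)$ would -- through the abutment, the relevant $E_\infty$-piece sitting inside it modulo a submodule of grade $>j$, and the tail of the filtration -- produce a nonzero submodule of $M$ of grade $>j$, which is impossible.

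Part (iii) is where the genuine work lies, and I expect it to be the main obstacle, precisely because graded submodules of $\gr_F M$ need \emph{not} be of the form $\gr_F N$ for $N\subseteq M$, so purity of $M$ cannot simply be transported to $\gr_F M$. The plan: start from any good filtration $F_\bullet M$; Proposition~\ref{prop- M pure grade is associated graded grade} already gives $j(\gr_F M)=j$. Let $\mathcal T\subseteq\gr_F M$ be the largest submodule of grade $>j$ -- it exists by the grade-filtration theory of \cite{Bjork}, and it is automatically graded -- so that $\gr_F M/\mathcal T$ is pure of grade $j$ (since $\gr A$ is commutative regular, grade is codimension of support, and $\gr_F M/\mathcal T$ has no submodule of smaller-dimensional support than $\gr_F M$). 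One then argues that $\mathcal T$ can be removed by shifting to a new good filtration $F'_\bullet M$: purity of $M$ is exactly what guarantees that the symbols spanning $\mathcal T$ reflect no genuine grade-$>j$ behaviour in $M$, so a saturation construction (roughly $F'_iM=\{m\in F_iM:\ \In(m)\notin\mathcal T\}$, made into an honest exhaustive separated filtration) yields a good filtration whose associated graded has no grade-$>j$ submodule, hence is pure of grade $j$. The delicate points -- that $F'_\bullet M$ is again good, i.e.\ compatible with the $\gr A$-action, and that $\mathcal T$ does not regenerate in $\gr_{F'}M$, which is where the hypothesis on $M$ is consumed -- are exactly the content of \cite{Bjork} (A.IV.4.11), which I would cite rather than reprove.
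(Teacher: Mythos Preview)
The paper does not prove this lemma at all: it is stated as a citation of \cite{Bjork} (A.IV.2.6, A.IV.4.11) and used as a black box. So there is no ``paper's proof'' to compare against, and your biduality--spectral--sequence approach is indeed the standard route one finds in Bj\"ork.

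That said, your sketch has a real gap in (i). You correctly set up $P=\Ext_A^{j}(M,A)$, observe $j(P)=j$, and identify $\ker\theta_P=P_{j+1}$ via the abutment filtration. But then you assert that ``$\theta_P$ together with the vanishing $\Ext_A^{<j}(\Ext_A^{j}(P,A),A)=0$ forces $P_{j+1}=0$.'' That vanishing only says $j(\Ext^{j}(P,A))\ge j$, which is what makes $\theta_P$ well-defined; it does not by itself prove $\theta_P$ is injective. The missing ingredient is the triangle identity for biduality: the composite
\[
P \xrightarrow{\ \theta_P\ } \Ext_A^{j}\bigl(\Ext_A^{j}(P,A),A\bigr) \xrightarrow{\ \Ext^{j}(\theta_M)\ } \Ext_A^{j}(M,A)=P
\]
is the identity, so $\theta_P$ is split injective and hence $P_{j+1}=\ker\theta_P=0$. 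This is where the hypothesis $P=\Ext^{j}(M,A)$ (and not merely $j(P)=j$) is actually consumed. Your argument for the converse of (ii) is also too vague: from purity of $M$ you get $E_\infty^{k,-k}=0$ for $k>j$, but since $E_\infty^{k,-k}$ is only a \emph{sub}module of $E_2^{k,-k}$ (there are outgoing differentials to the line $p+q=1$), the vanishing of $E_2^{k,-k}=\Ext^{k}(\Ext^{k}(M,A),A)$ does not follow from the abutment alone and requires a further argument. Finally, in (iii) your proposed saturation $F'_iM=\{m\in F_iM:\In(m)\notin\mathcal T\}$ is not even closed under addition, so it is not a filtration; you are right to flag this step as delicate and defer to Bj\"ork, but the formula as written cannot be the construction.
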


In \cite{ZeroLociI} the authors introduce a noncommutative generalization of Cohen--Macaulay modules which will prove very useful for us throughout this paper.

\begin{define} \label{def- CM for Auslander regular}
For $A$ an Auslander regular ring and $0 \neq M$ a finitely generated left $A$-module, we say $M$ is \emph{$j$-Cohen--Macualay} if 
\[
\text{Ext}_{A}^{k}(M, A) = 0 \text{ precisely when } k \neq j.
\]
\end{define}

Purity and being $j$-Cohen--Macaulay are related by the following:

\begin{proposition} \label{prop-CM implies pure}
For $A$ an Auslander regular ring and $0 \neq M$ a finitely generated left $A$-module, if $M$ is $j$-Cohen--Macaulay then $M$ is pure of grade $j$.
\end{proposition}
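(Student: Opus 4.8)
The plan is to reduce the statement immediately to the purity criterion recorded in Lemma \ref{lemma-criterion for purity}(ii). The first step is to identify the grade of $M$. By hypothesis $M$ is $j$-Cohen--Macaulay, so $\Ext_A^k(M,A) = 0$ for every $k \neq j$ while $\Ext_A^j(M,A) \neq 0$; hence the least index in which an Ext-module against $A$ is nonzero is exactly $j$, i.e. $j(M) = j$. (Here $0 \neq M$ together with Auslander regularity guarantees that this grade is well defined and finite.)

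The second step is to check the double-Ext vanishing condition of Lemma \ref{lemma-criterion for purity}(ii) for the value $j(M) = j$. For any $k \neq j$ we already have $\Ext_A^k(M,A) = 0$ by the Cohen--Macaulay hypothesis, and therefore the iterated module $\Ext_A^k\big(\Ext_A^k(M,A),A\big) = \Ext_A^k(0,A) = 0$ vanishes trivially. This is precisely the hypothesis of part (ii) of the lemma, so we conclude that $M$ is pure of grade $j(M) = j$, which is the claim.

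Since the argument is a one-line consequence of the cited lemma once the grade has been pinned down, I do not anticipate any genuine obstacle; the only care needed is to read off $j(M) = j$ from the definition of $j$-Cohen--Macaulay and to phrase the vanishing of $\Ext_A^k(M,A)$ ($k \neq j$) in exactly the form that feeds Lemma \ref{lemma-criterion for purity}(ii). One could alternatively argue directly from the definition of purity, using that over an Auslander regular ring the grade is non-decreasing on submodules together with the long exact $\Ext$-sequence attached to $0 \to N \to M \to M/N \to 0$ and the vanishing of $\Ext_A^{>j}(M,A)$; but routing through the established criterion is cleaner, and that is the version I would present.
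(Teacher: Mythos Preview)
Your proof is correct and follows essentially the same route as the paper: both identify $j(M)=j$ from the Cohen--Macaulay hypothesis and then invoke the double-Ext criterion of Lemma~\ref{lemma-criterion for purity}(ii), which is satisfied trivially since $\Ext_A^k(M,A)=0$ for $k\neq j$. The paper additionally cites part (i) of that lemma, but your argument shows that (ii) alone already suffices.
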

\begin{proof}
This is immediate by (i) and (ii) of Lemma \ref{lemma-criterion for purity}.
\end{proof}

A crucial technique of this paper will be to transfer Cohen--Macaulay properties of $\gr M$ to Cohen--Macaulay properties of $M$. The next lemma will be helpful. 

\begin{lemma} \label{lemma-good filtration on ext} \normalfont{(A.IV.4.5 \cite{Bjork})}
Let $A$ be a positively filtered ring and $0 \neq M$ a finitely generated left $A$-module. Then for a good filtration on $M$, there is a good filtration on $\Ext_{A}^{k}(M,A)$ so that $\gr \Ext_{A}^{k}(M,A)$ a subquotient of $\Ext_{\gr A}^{k}(\gr M, \gr A).$
\end{lemma}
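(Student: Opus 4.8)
The plan is the classical ``spectral sequence of a filtered complex'' argument: compute $\Ext_A^k(M,A)$ from a free resolution of $M$ that is strictly compatible with the filtrations, dualize, and read off the comparison with $\Ext_{\gr A}^k(\gr M,\gr A)$ from the associated graded complex. Throughout I may assume $\gr A$ is Noetherian, as holds in every application of this lemma, which keeps the good-filtration bookkeeping routine.

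First I would build a \emph{strict} filtered free resolution of $M$. Starting from the given good filtration, choose finitely many homogeneous generators of $\gr M$, lift them to $M$, and form the induced filtered map $\pi_0\colon L_0:=\bigoplus_i A[m_i]\to M$ from a finite sum of shifted copies of $A$. Since the filtration on $A$ is positive (hence bounded below) and the good filtration on $M$ is bounded below, a step-by-step lifting argument shows $\pi_0$ is surjective and strict, so the short exact sequence $0\to\ker\pi_0\to L_0\to M\to 0$ is strict exact and $\gr\ker\pi_0=\ker\gr\pi_0$. As $\gr A$ is Noetherian this kernel inherits a good filtration, so iterating produces a resolution $L_\bullet\to M$ by finite filtered-free $A$-modules whose associated graded $\gr L_\bullet\to\gr M$ is a free resolution of $\gr M$ over $\gr A$.

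Next I would dualize. Set $C^\bullet:=\Hom_A(L_\bullet,A)$, a cochain complex of filtered $A$-modules; because each $L_i$ is free of finite rank, $\Hom_A(L_i,A)$ is again filtered-free of finite rank and the canonical map gives $\gr\Hom_A(L_i,A)\cong\Hom_{\gr A}(\gr L_i,\gr A)$ compatibly with differentials, so $\gr C^\bullet\cong\Hom_{\gr A}(\gr L_\bullet,\gr A)$. Taking cohomology, $H^k(C^\bullet)=\Ext_A^k(M,A)$ while $H^k(\gr C^\bullet)=\Ext_{\gr A}^k(\gr M,\gr A)$ by the previous step. Now equip $\Ext_A^k(M,A)$ with the filtration induced from $C^\bullet$ (filtered cocycles modulo filtered coboundaries). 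All filtrations in sight being exhaustive and bounded below, the spectral sequence of the filtered complex $C^\bullet$ converges, with $E_1$-page $H^\bullet(\gr C^\bullet)$ abutting to $\gr H^\bullet(C^\bullet)$; convergence exhibits $\gr\Ext_A^k(M,A)$ as a subquotient of $\Ext_{\gr A}^k(\gr M,\gr A)$. That the induced filtration is good follows since it is a subquotient of the good filtration on the finite filtered-free module $\Hom_A(L_k,A)$ and $\gr A$ is Noetherian. The one genuine obstacle is the first step — verifying that the free resolution can be chosen strict, so that passing to $\gr$ is exact along it; this is exactly where positivity of the filtration on $A$ is used, and everything downstream is formal.
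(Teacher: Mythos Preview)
The paper does not give its own proof of this lemma; it simply cites it as A.IV.4.5 of \cite{Bjork}. Your argument is essentially the standard one found there: build a strict filtered free resolution using positivity, dualize, and read off the subquotient statement from the $E_1$-page of the spectral sequence of the filtered dual complex.
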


In certain commutative settings, grade of $M$ and the purity of $M$ have geometric interpretations:

\begin{proposition} \label{prop- pure iff associated prime same heights} (Proposition 4.5.1 \cite{ZeroLociI})
Suppose $A$ is a commutative, regular, Noetherian domain and $0 \neq M$ a finitely generated $A$-module. Then 
\begin{enumerate}
    \item If $\dim A_{\mathfrak{m}}$ is the same for all $\mathfrak{m} \in \mSpec A$, then $j(M) + \dim M = \dim A$.
    \item $M$ is a pure $A$-module if and only if every associated prime of $M$ is minimal and $\dim A_{\mathfrak{p}} = j(M)$ for all minimal primes $\mathfrak{p}$ of $M$.
\end{enumerate}
\end{proposition}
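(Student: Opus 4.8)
The plan is to reduce both statements to standard commutative algebra facts about regular local rings, using the Auslander--Buchsbaum formula and localization of Ext. For part (1), I would first recall that over an Auslander regular ring the grade $j(M)$ equals $\min_{\mathfrak{p} \in \Spec A} \operatorname{depth} A_{\mathfrak{p}}$ taken over primes in $\supp M$; more precisely, since $A$ is a regular Noetherian domain, every localization $A_{\mathfrak{p}}$ is a regular local ring, hence Cohen--Macaulay, so $\operatorname{depth} A_{\mathfrak{p}} = \dim A_{\mathfrak{p}} = \operatorname{ht} \mathfrak{p}$. The key identity is that $\Ext_A^k(M,A)_{\mathfrak{p}} = \Ext_{A_{\mathfrak{p}}}^k(M_{\mathfrak{p}}, A_{\mathfrak{p}})$, so $j(M) = \min\{\operatorname{ht}\mathfrak{p} : \mathfrak{p} \in \supp M\}$, and this minimum is attained at a minimal prime of $\supp M$, i.e. an associated prime or at least a minimal prime of $\ann M$. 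Under the hypothesis that $\dim A_{\mathfrak{m}}$ is constant over $\mSpec A$ — which makes $A$ equidimensional and catenary, so $\operatorname{ht}\mathfrak{p} + \dim(A/\mathfrak{p}) = \dim A$ for every prime $\mathfrak{p}$ — the minimal primes of $M$ of smallest height are exactly those with $\dim(A/\mathfrak{p})$ largest, namely $\dim(A/\mathfrak{p}) = \dim M$. Combining, $j(M) + \dim M = \operatorname{ht}\mathfrak{p} + \dim(A/\mathfrak{p}) = \dim A$ for such a prime, giving (1).

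For part (2), I would argue in both directions using the localization principle again. Suppose $M$ is pure of grade $j := j(M)$. First, every associated prime $\mathfrak{p}$ of $M$ satisfies $\operatorname{ht}\mathfrak{p} = j$: indeed $A/\mathfrak{p}$ embeds into $M$, so $j(A/\mathfrak{p}) = j$ by purity, and localizing at $\mathfrak{p}$ shows $\operatorname{depth}(A/\mathfrak{p})_{\mathfrak{p}} $-type computations give $j(A/\mathfrak{p}) = \operatorname{ht}\mathfrak{p}$ since $A_{\mathfrak{p}}$ is regular local and $A/\mathfrak{p}$ has a minimal free resolution over $A_{\mathfrak{p}}$ of length $\operatorname{ht}\mathfrak{p}$ (Auslander--Buchsbaum: $\operatorname{pd}_{A_{\mathfrak{p}}}(A/\mathfrak{p})_{\mathfrak{p}} = \dim A_{\mathfrak{p}} - \operatorname{depth}(A/\mathfrak{p})_{\mathfrak{p}} = \operatorname{ht}\mathfrak{p}$, and the top Ext is nonzero). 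Hence all associated primes have the same height $j$; in particular none can properly contain another, so every associated prime is minimal and $\dim A_{\mathfrak{p}} = j$. Conversely, if every associated prime is minimal of height $j$, then for any nonzero submodule $N \subseteq M$, the associated primes of $N$ form a subset of those of $M$, all of height $j$, so $\supp N$ has all its minimal primes of height $j$, whence $j(N) = \min_{\mathfrak{p} \in \supp N}\operatorname{ht}\mathfrak{p} = j$; thus $M$ is pure of grade $j$.

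The main obstacle, and the step requiring the most care, is the localization formula $\Ext_A^k(M,A)_{\mathfrak{p}} \cong \Ext_{A_{\mathfrak{p}}}^k(M_{\mathfrak{p}},A_{\mathfrak{p}})$ together with the translation of grade into heights: one must check that the global grade (smallest $k$ with $\Ext_A^k(M,A)\neq 0$) really is computed as the minimum over $\supp M$ of the local grades, which uses that $\Ext_A^k(M,A) \neq 0$ iff it is nonzero at some prime, and that over the regular local ring $A_{\mathfrak{p}}$ the grade of $M_{\mathfrak{p}}$ is $\dim A_{\mathfrak{p}} - \operatorname{depth} M_{\mathfrak{p}}$ by Auslander--Buchsbaum, which attains $\operatorname{ht}\mathfrak{p}$ exactly when $\mathfrak{p}$ is minimal in $\supp M$ (so that $M_{\mathfrak{p}}$ has finite length and depth $0$). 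Everything else is bookkeeping with the catenary/equidimensional hypothesis in (1) and with associated primes versus minimal primes in (2). I would cite Bruns--Herzog or Matsumura for the Auslander--Buchsbaum formula and for the behavior of Ext under localization, and otherwise keep the argument self-contained.
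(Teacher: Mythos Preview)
The paper does not prove this proposition at all; it simply cites Proposition~4.5.1 of \cite{ZeroLociI} and uses the result as a black box. So there is no ``paper's proof'' to compare against, and your task is really to supply a correct proof from scratch.

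Your overall approach is sound and is the standard one: reduce the grade to a height computation via the identity $j(M) = \operatorname{grade}(\ann M, A)$, then use that a regular Noetherian ring is Cohen--Macaulay so that $\operatorname{grade}(I,A) = \operatorname{ht}(I)$ for every ideal $I$. Together with the catenary/equidimensional hypothesis in (1), and the fact that associated primes of submodules are associated primes of $M$ in (2), everything goes through. One imprecision worth fixing: in your final ``obstacle'' paragraph you write that over a regular local ring $A_{\mathfrak{p}}$ one has $j(M_{\mathfrak{p}}) = \dim A_{\mathfrak{p}} - \operatorname{depth} M_{\mathfrak{p}}$ ``by Auslander--Buchsbaum.'' This is not what Auslander--Buchsbaum says, and the correct formula is $j(M_{\mathfrak{p}}) = \dim A_{\mathfrak{p}} - \dim M_{\mathfrak{p}}$ (this is the CM identity $\operatorname{grade} = \operatorname{ht}$, not AB). At minimal primes of $\supp M$ the two agree because $M_{\mathfrak{p}}$ has finite length, so your conclusion there is unaffected; but the general statement as written is wrong and should be corrected or removed. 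Similarly, the justification for $j(A/\mathfrak{p}) = \operatorname{ht}\mathfrak{p}$ in part (2) is cleaner if you just invoke $\operatorname{grade}(\mathfrak{p},A) = \operatorname{ht}\mathfrak{p}$ directly rather than routing through Auslander--Buchsbaum.
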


Finally, in our setting there is a notion of characteristic variety (defined entirely similarly to the $\mathscr{D}_{X}$-module version) that is integral to this paper. 
 
\begin{define} \label{def-characteristic variety}
Let $A$ be a positively filtered, Noetherian ring such that $\gr A$ is a commutative regular Noetherian ring. Let $0 \neq M$ be a finitely generated left $A$-module and $\Gamma$ a good filtration on $M$. Define the \emph{characteristic variety} $\Ch (M)$ of $M$ by
\[ 
\Ch (M) := \Variety (\rad ( \ann_{\gr A} \gr M))
\]
where $\gr M$ is defined with respect to $\Gamma$. In fact, $\Ch (M)$ does not depend on the choice of good filtration of $M$.
\end{define}

\subsection{Introduction to $\mathscr{D}_{X,\xpoint}[S] F^{S} / \mathscr{D}_{X,\xpoint}[S]F^{S+\textbf{a}}$} \text{ }

Here we introduce two filtrations on $\mathscr{D}_{X}[S]$ that both extend the standard order filtration on $\mathscr{D}_{X, \mathfrak{x}}[S]$. We also recall some of the fundamental results about $\mathscr{D}_{X}[S]F^{S} / \mathscr{D}_{X}[S] F^{S + \textbf{a}}$ proved in \cite{ZeroLociII} (see also \cite{Maisonobe}).

\begin{define} \label{def-total order filtration}
The \emph{total order filtration} on $\mathscr{D}_{X}[S]$ is defined by extending the order filtration on $\mathscr{D}_{X}$ to $\mathscr{D}_{X}[S]$ by giving each $s_{k}$ weight one. More precisely, in local coordinates $(x, \partial)$ near $\xpoint$ we can write any element of $\mathscr{D}_{X, \xpoint}[S]$ as $\sum_{\textbf{v}, \textbf{w}} g_{\textbf{v}, \textbf{w}} \partial^{\textbf{v}} s^{\textbf{w}}$ where $g_{\textbf{v}, \textbf{w}} \in \mathscr{O}_{X,\xpoint}$, $\textbf{v} \in \mathbb{Z}_{\geq 0}^{n}$, $\textbf{w} \in \mathbb{Z}_{\geq 0}^{r}$, $\partial^{\textbf{v}} = \partial_{1}^{u_{1}} \cdots \partial_{n}^{u_{n}}$, and $s^{\textbf{w}} = s_{1}^{w_{1}} \cdots s_{r}^{w_{r}}$. The elements of order at most $k$ are those that admit an expression above satisfying $\abs{\textbf{v}} + \abs{\textbf{w}} = v_{1} + \cdots + v_{n} + w_{1} + \cdots + w_{r} \leq k$ for each summand. The associated graded ring is $\gr^{\sharp}(\mathscr{D}_{X}[S]).$
\end{define}

\begin{define}
The \emph{relative order filtration} on $\mathscr{D}_{X}[S]$ is defined by extending the order filtration on $\mathscr{D}_{X}$ to $\mathscr{D}_{X}[S]$ by giving each $s_{k}$ weight zero. Using the expression defined above, elements of order at most $k$ satisfy $\abs{\textbf{v}} \leq k$. The associated graded ring is $\gr^{\rel}(\mathscr{D}_{X}[S])$. 
\end{define}

Under either the total order filtration or the relative order filtration, the associated graded rings satisfy the following isomorphisms locally:
\[
\gr^{\sharp}(\mathscr{D}_{X, \mathfrak{x}}[S]) \simeq \mathscr{O}_{X,\mathfrak{x}}[y_{1}, \dots, y_{n}][s_{1}, \dots, s_{r}] \simeq \gr^{\rel}( \mathscr{D}_{X, \mathfrak{x}}[S]).
\]
Here, $y_{i}$ corresponds to the principal symbol of $\partial_{i}$ (after picking coordinates) under the appropriate grading. Since either the relative or total order filtration is a positive filtration, this identification means we may apply the techniques from the previous subsection to $\mathscr{D}_{X, \xpoint}[S]$. (See \cite{Bjork} A.IV.3.6.) Now let us solidify some notation related to $\mathscr{D}_{X}[S]F^{S}$: 

\begin{convention} \label{convention} Given a factorization $F = (f_{1}, \dots, f_{r})$ of $f$ and $\textbf{a} \in \mathbb{N}^{r}$, we denote by $f^{\textbf{a}}$ the product $f_{1}^{a_{1}} \cdots f_{r}^{a_{r}}$. The multivariate Bernstein--Sato ideal of $F$ associated to $\textbf{a}$ at $\xpoint$ is denoted by $B_{F,\xpoint}^{\textbf{a}}$; the ideal generated by the \emph{Bernstein--Sato polynomial} (or b-function) of $f$ associated to $a \in \mathbb{N}$ at $\xpoint$ is $B_{f,\xpoint}^{a}$. When $a = 1$ we just write $B_{f,\xpoint}$. When $F$ (or $f$) is global algebraic, we reserve $B_{F}^{\textbf{a}}$ and $B_{f}^{a}$ for the global algebraic versions of these objects, see subsection 3.3. By ``multivariate'' we mean $r > 1$; by ``univariate'' we mean $r = 1.$ The reader should note that in \cite{Bath1, Bath2} we used $\gr_{(0,1,1)}(-)$ and $\gr_{(0,1,0)}(-)$ instead of $\gr^{\sharp}(-)$ and $\gr^{\rel}(-)$ for the associated graded objects attached to the total order and relative order filtrations. In those papers we use $\Variety(-)$ with respect to Bernstein--Sato ideals in the same way as we use $Z(-)$ here: to denote the zero locus. In this paper we use $\Variety(-)$ for elements of $\mathscr{O}_{X}$ or when the input is reduced.
\end{convention}

\begin{define}
Consider a finitely generated $\mathscr{D}_{X}[S]$-module $M$. We denote its characteristic variety, cf. Definition \ref{def-characteristic variety}, with respect to the relative order filtration by $\Ch^{\text{rel}}(M)$; with respect to the total order filtration by $\Ch^{\sharp}(M)$.
\end{define}

In \cite{Maisonobe}, Maisonobe introduce a criterion on a $\mathscr{D}_{X}[S]$-module he calls \emph{major\'e par une lagrangienne}. This guarantees the relative characteristic variety has a very nice product structure. We adopt the verbiage of \cite{ZeroLociI} where this is generalized slightly:

\begin{define} \label{def- relative holonomic} \normalfont{(Definition 3.2.3, \cite{ZeroLociI})}
A finitely generated $\mathscr{D}_{X}[S]$-module $M$ is \emph{relative holonomic} if its relative characteristic variety satisfies
\[
\text{Ch}^{\text{rel}}(M) = \bigcup\limits_{\alpha} \Lambda_{\alpha} \times S_{\alpha} \subseteq T^{\star}X \times \mathbb{C}^{r},
\]
where the $\Lambda_{\alpha}$ are irreducible, conical Lagrangian varieties in the cotangent bundle $T^{\star}X$ and the $S_{\alpha}$ are irreducible algebraic varieties in $\mathbb{C}^{r}$. We denote by $p_{2}(\text{Ch}^{\text{rel}}(M))$ the image of the relative characterstic variety under the canonical projection $p_{2}: T^{\star}X \times \mathbb{C}^{r} \to \mathbb{C}^{r}.$
\end{define}

Maisonobe demonstrated in \cite{Maisonobe} that $\mathscr{D}_{X, \xpoint}[S]F^{S} / \mathscr{D}_{X, \xpoint}[S]F^{S+\textbf{1}}$ has similar properties as in classical $F = (f)$ setting, cf. Resultat 2, Resultat 3, and Proposition 9 of loc. cit. Most of these properties were generalized in \cite{ZeroLociI, ZeroLociII}. We summarize the ones we need below. Note that (iv) requires interpreting $B_{F, \mathfrak{x}}^{\textbf{a}}$ as the $\mathbb{C}[S]$-annihilator of $\mathscr{D}_{X, \xpoint}[S] F^{S} / \mathscr{D}_{X, \xpoint}[S]F^{S+\textbf{a}}$. 

\begin{theorem} \text{ \normalfont(Theorem 3.2.1 \cite{ZeroLociII}; Lemma 3.4.1 \cite{ZeroLociI})} \label{thm-ZeroLociII results}
Suppose that $f^{\textbf{a}}$ is not a unit. Then the following are true for the $\mathscr{D}_{X, \xpoint}[S]$-module $\mathscr{D}_{X, \xpoint}[S] F^{S} / \mathscr{D}_{X, \xpoint}[S]F^{S+\textbf{a}}$:
\begin{enumerate}[(i)]
    \item it is relative holonomic;
    \item it has grade $n+1$; 
    \item $\dim \text{\normalfont Ch}^{\text{\normalfont rel}}(\mathscr{D}_{X, \xpoint}[S] F^{S} / \mathscr{D}_{X, \xpoint}[S] F^{S+1}) = n + r - 1$; 
    \item $Z(B_{F, \xpoint}^{\textbf{a}}) = p_{2} (\Ch^{\rel}(\mathscr{D}_{X, \xpoint}[S] F^{S} / \mathscr{D}_{X, \xpoint}[S] F^{S+\textbf{a}})). $
\end{enumerate}
\end{theorem}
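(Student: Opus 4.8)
The plan is to reduce all four assertions to an explicit computation on a normal crossings model obtained by resolution of singularities, and then to extract (ii) and (iv) from formal properties of good filtrations, grade, and the (commutative, regular) associated graded ring. Throughout I would write $N := \mathscr{D}_{X,\xpoint}[S]F^{S} \subseteq \mathscr{O}_{X,\xpoint}[f^{-1}][S]F^{S}$ for the cyclic module, $N' := \mathscr{D}_{X,\xpoint}[S]F^{S+\textbf{a}}$ for its submodule, and $M := N/N'$ for the module in the statement; the hypothesis that $f^{\textbf{a}}$ is not a unit forces $N' \subsetneq N$, so $M \neq 0$.

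I would establish (i) and (iii) together via resolution. Choose a log resolution $\pi\colon Y \to X$ of $\Variety(f)$, so that near any point of $Y$ each $f_{k}\circ\pi$ is, in suitable local coordinates, a unit times a monomial $\prod_{j} x_{j}^{m_{kj}}$. On such a normal crossings model the relative characteristic varieties can be written down by hand: the annihilator of $(f\circ\pi)^{S}$ contains the operators $x_{j}\partial_{j} - \sum_{k} m_{kj}s_{k}$ for each coordinate $x_{j}$ dividing the divisor and $\partial_{j}$ for the remaining coordinates, and---since the $s_{k}$ have relative order $0$---their relative symbols are $x_{j}y_{j}$ and $y_{j}$; hence $\Ch^{\rel}$ of $\mathscr{D}_{Y}[S](f\circ\pi)^{S}$ lies in a finite union of products $\Lambda_{\beta}\times\mathbb{C}^{r}$ with the $\Lambda_{\beta}$ conormals to coordinate strata (conical Lagrangian in $T^{\star}Y$). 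Passing to the quotient by $(f\circ\pi)^{\textbf{a}}$ replaces each $\mathbb{C}^{r}$ factor by the affine hyperplanes $\{\sum_{k} m_{kj}s_{k} + c = 0\}$ read off from the one-variable functional equations for the powers $x_{j}^{N_{j}}$, $N_{j}=\sum_{k}a_{k}m_{kj}$. This is Maisonobe's ``major\'e par une lagrangienne'': it shows the normal crossings model is relative holonomic, with $\Ch^{\rel}$ of the $M$-analogue of pure dimension $n+r-1$. I would then transfer this to $X$: $N$ and $M$ are subquotients of the cohomology sheaves of $\pi_{+}$ applied to the corresponding $\mathscr{D}_{Y}[S]$-modules, and proper direct image and subquotients preserve relative holonomicity (the Kashiwara estimate on characteristic varieties, carried out relative to $S$), which gives (i). For (iii), the same pushforward estimate never manufactures a component with $S_{\alpha}=\mathbb{C}^{r}$, so $\dim\Ch^{\rel}(M)\le n+r-1$; the matching lower bound---that some $S_{\alpha}$ is a genuine hypersurface---is the hyperplane structure of the codimension-one part of the Bernstein--Sato locus, due to Sabbah and Gyoja for $\textbf{a}=\textbf{1}$ and extended in \cite{Maisonobe, ZeroLociII}.

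Now I would deduce (ii) formally. The relative order filtration is positive with $\gr^{\rel}(\mathscr{D}_{X,\xpoint}[S]) \cong \mathscr{O}_{X,\xpoint}[y_{1},\dots,y_{n}][s_{1},\dots,s_{r}]$, a commutative regular---hence Auslander regular and Cohen--Macaulay---Noetherian ring of dimension $2n+r$. By Proposition \ref{prop- M pure grade is associated graded grade} one has $j(M)=j(\gr^{\rel}M)$, and over a Cohen--Macaulay ring the grade of a nonzero finitely generated module equals the codimension of its support; hence $j(M)=(2n+r)-\dim\Ch^{\rel}(M)=(2n+r)-(n+r-1)=n+1$. (Alternatively one may apply the grade-jump lemma of \cite{Bjork} to $0 \to N' \to N \to M \to 0$ once the normal crossings computation is used to see that $N$ and $N'$ are pure of grade $n$.) For (iv) set $B := B_{F,\xpoint}^{\textbf{a}} = \ann_{\mathbb{C}[S]}M$. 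Since $S$ is central of relative order $0$, multiplication by $b(S)\in\mathbb{C}[S]$ preserves a good filtration on $M$ step by step, so $b(S)M=0$ implies $b(S)\gr^{\rel}M=0$; conversely, if $b(S)^{k}$ annihilates $\gr^{\rel}M$ then, the good filtration on the finitely generated module $M$ being bounded below, a single power $b(S)^{N}$ maps every filtration step to $0$, and centrality of $b(S)^{N}$ then gives $b(S)^{N}M=0$. Hence $\rad B=\rad\ann_{\mathbb{C}[S]}\gr^{\rel}M=\rad\big(\ann_{\gr^{\rel}(\mathscr{D}_{X,\xpoint}[S])}\gr^{\rel}M\cap\mathbb{C}[S]\big)$, which is the radical ideal of the Zariski closure of $p_{2}(\Ch^{\rel}M)$; by (i) that closure is $p_{2}(\Ch^{\rel}M)=\bigcup_{\alpha}S_{\alpha}$ itself, so $Z(B)=\Variety(\rad B)=p_{2}(\Ch^{\rel}M)$, which is (iv).

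The main obstacle is the resolution input behind (i) and (iii): one must genuinely control the relative characteristic variety under the direct image $\pi_{+}$---that is, prove that relative holonomicity, and the property that the parameter loci $S_{\alpha}$ stay proper algebraic subsets of $\mathbb{C}^{r}$, are stable under proper pushforward of $\mathscr{D}[S]$-modules. This is exactly Maisonobe's theorem that $N$ is majorized by a Lagrangian in the relative sense; once it is available, the rest---the normal crossings model, the codimension/grade identity, and the support-theoretic argument for (iv)---is bookkeeping with the facts recorded in Section 2.
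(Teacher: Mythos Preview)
The paper does not prove this theorem at all: it is stated as a quotation of external results (Theorem 3.2.1 of \cite{ZeroLociII} and Lemma 3.4.1 of \cite{ZeroLociI}, building on Maisonobe's \cite{Maisonobe}), and is used as a black box thereafter. So there is no ``paper's own proof'' to compare against; your proposal is really a sketch of the arguments in those cited sources. In broad strokes your outline---resolution to a normal crossings model, explicit relative characteristic variety there, stability of relative holonomicity under proper direct image, then formal extraction of grade and the projection identity---does track Maisonobe's strategy as summarized in \cite{ZeroLociI, ZeroLociII}.

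There is one genuine soft spot in your deduction of (ii). You write that ``over a Cohen--Macaulay ring the grade of a nonzero finitely generated module equals the codimension of its support,'' and then compute $j(M)=(2n+r)-\dim\Ch^{\rel}(M)$. But the relevant ring $\gr^{\rel}(\mathscr{D}_{X,\xpoint}[S])\cong\mathscr{O}_{X,\xpoint}[y_{1},\dots,y_{n}][s_{1},\dots,s_{r}]$ is a polynomial ring over an analytic local ring, and it has maximal ideals of different heights; the paper itself flags exactly this point in the paragraph surrounding equation~\eqref{eqn- local analytic dimension grade fml}, noting that Proposition~\ref{prop- pure iff associated prime same heights}(1) does \emph{not} apply directly and that Maisonobe had to supply a separate argument to get the formula $j(M)+\dim\Ch^{\rel}(M)=2n+r$ in this local analytic setting. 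Your sketch treats this as routine commutative algebra, which it is not; you would need to either reproduce Maisonobe's argument (or the account in section 3.6 of \cite{ZeroLociI}) or at least acknowledge that this step requires the specific structure of the relative characteristic variety, not just abstract Cohen--Macaulayness. The rest of your outline---(i) via resolution and Kashiwara-type estimates, (iii) from the Lagrangian bound plus the known hyperplane structure of the codimension-one part, and (iv) via the central, weight-zero action of $\mathbb{C}[S]$ on good filtrations---is the standard route and is correct in spirit.
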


\subsection{Hypotheses on the Logarithmic Data of $\text{Div}(f)$} \text{ }

Here we introduce geometric conditions on the $\text{Div}(f)$ that were first considered in \cite{uli} and later in \cite{Bath1}. All of them involve the logarithmic information of $f$ and do not depend on the choice of defining equation of $f$. We generally do not assume $\text{Div}(f)$ is reduced and, as such (and following \cite{uli}), differentiate between the logarithmic data of $\text{Div}(f)$ and of $\text{Div}(f_{\text{red}})$. Most of our hypotheses hold irrespective of a reduced assumption, cf. Proposition \ref{prop - appendix relating log forms along D to reduced D}, Corollary \ref{cor - appendix, listing similar properties for non-reduced} as well as Appendix A at large. Nevertheless one should take heed of the terminal item of Remark \ref{rmk-strongly Euler}. 

\begin{define}

Let $\mathscr{I}_{\text{Div}(f)} \subseteq \mathscr{O}_{X}$ be the ideal sheaf of $\text{Div}(f)$ and $\Der_{X}$ the sheaf of derivations on $X$. The \emph{logarithmic derivations} are the subsheaf $\Der_{X}(-\log f)$ of $\Der_{X}$ locally generated by the derivations $\delta$ such that $\delta \bullet (\mathscr{I}_{\text{Div}(f)}) \subseteq \mathscr{I}_{\text{Div}(f)}$.

\end{define}

Straightforward computations with the product rule imply that $\Der_{X}(-\log f) = \Der_{X}(-\log f_{\text{red}})$ and that the logarithmic derivations do not depend on the choice of defining equation. However, direct sum decompositions of the logarithmic derivations ala Remark 2.10 of \cite{uli} may depend on reducedness and/or the choice of defining equation. The following hypotheses helps to alleviate some of the issues involving choice of equation, cf. Remark 3.2 of loc. cit. and Remark 2.15 of \cite{Bath1}.

\begin{define} \label{def- strongly Euler-homogeneous}
We say $f$ is strongly Euler-homogeneous at $\xpoint$ if there is a derivation $E_{\xpoint} \in \Der_{X,\xpoint}(-\log f)$ such that (1) $E_{\xpoint} \bullet f = f$ and (2) $E_{x}$ vanishes at $\xpoint$. We say $f$ is strongly Euler-homogeneous if it is strongly Euler-homogeneous for all $\mathfrak{x} \in \text{V}(f)$.
\end{define}

\begin{remark} \label{rmk-strongly Euler}
\begin{enumerate}[(a)]
    \item The condition ``strongly Euler-homogeneous'' does not depend on the choice of local defining equation of $\text{Div}(f)$: if $E_{\xpoint} \bullet f = f$, then $\frac{u E_{\xpoint}}{u + E_{\xpoint} \bullet u} \bullet u f = uf.$ If you remove condition (2), the choice may matter.
    \item Hyperplane arrangements are strongly Euler-homogeneous. At the origin, the Euler derivation $\sum x_{i} \partial_{i}$ is a strong Euler-homogeneity; an appropriate coordinate change deals with the other points. Divisors that locally everywhere admit a choice of coordinates so that they can be defined by a ``homogeneous polynomial'' (with respect to an appropriate weight system) are known as \emph{quasi-homogeneous divisors} and are also strongly Euler-homogeneous.
     \item If $m \in \mathbb{Z}_{\geq 1}$, then $f$ is strongly Euler-homogeneous at $\mathfrak{x}$ if and only if $f^{m}$ is strongly Euler-homogeneous at $\mathfrak{x}$.
    \item Whether or not ``$f^{\textbf{a}}$ is strongly Euler-homogeneous at $\mathfrak{x}$'' implies ``$f^{\textbf{b}}$ is strongly Euler-homogeneous at $\mathfrak{x}$'' for arbitrary $\textbf{a}, \textbf{b} \in \mathbb{Z}_{\geq 1}^{r}$  seems quite subtle. It is not known to us, even for quasi-homogeneous divisors. When $f$ is a hyperplane arrangement this is true: up to coordinate change and a scaling, the Euler derivation is the requisite homogeneity. Essentially, this is because all the linear factors are homogeneous with respect to the same weight system. However, it seems possible that $f$ may be quasi-homogeneous without all of its factors $f_{k}$ being quasi-homogeneous; or, each $f_{k}$ may be quasi-homogeneous with respect to different weight systems that somehow assemble into a quasi-homogeneous $f$ with respect to a new weight system. Geometrically one might expect the aforementioned implication to hold at least for quasi-homogeneous $f$ since it can be visualized as a $\mathbb{C}^{\star}$-equivariant action on $\Variety(f)$.
\end{enumerate}
\end{remark}

In \cite{SaitoLogarithmicForms}, Saito used the logarithmic derivations to stratify $X$. Indeed, define an equivalence relation on $X$ as follows: two points $\mathfrak{x}$ and $\mathfrak{y}$ are related if there is an open set $U \subseteq X$ and a logarithmic derivation $\delta \in \Der_{U}(-\log f)$ that (1) does not vanish on $U$ and (2) the integral curve of $\delta$ passes through $\mathfrak{x}$ and $\mathfrak{y}$. The resulting stratification is the \emph{logarithmic stratification} and we call the strata the \emph{logarithmic strata}. 

\begin{define} \label{def-Saito holonomic}
We say $f$ is \emph{Saito-holonomic} if the logarithmic stratification of $X$ by $\Der_{X}(-\log f)$ is locally finite. 
\end{define}

\begin{remark} \label{rmk-Saito-holonomic}
\begin{enumerate}[(a)]
    \item As $\Der_{X}(-\log f) = \Der_{X}(-\log f_{\red})$, the logarithmic stratification depends only on the reduced structure of $f$.
    \item The $n$-dimensional logarithmic strata are the connected components of $X \setminus \Variety(f)$ and the $(n-1)$-dimensional strata are the smooth connected components of $\Variety(f_{\red}).$
    \item Hyperplane arrangements are Saito-holonomic, cf. Example 3.14 of \cite{SaitoLogarithmicForms}.
    \item Consider $f = z(x^{5} + y^{5} + zx^{2}y^{3})$ from Section 4.2 of \cite{BahloulOakuLocalBSIdeals}. This is not Saito-holonomic: every point on the $z$-axis is a zero dimensional logarithmic stratum.
    
\end{enumerate}
\end{remark}

In \cite{SaitoLogarithmicForms}, Saito also introduced (originally under a reducedness hypothesis) a sort of dual object to the logarithmic derivations:

\begin{define} \label{def-log forms}
Consider the classical sheaf of differential forms on $X$, $\Omega_{X}^{\bullet}$, whose differential $d$ is exterior differentiation. The sub-sheaf of \emph{logarithmic $k$-forms} $\Omega_{X}^{k}(\log f)$ satisfy
\[
\Omega_{X}^{k}(\log f) := \{ \omega \in \frac{1}{f} \Omega_{X}^{k} \mid d(\omega) \in \frac{1}{f} \Omega_{X}^{k+1}. \}
\]
We say $f$ is \emph{tame} at $\xpoint$ if the projective dimension of the $\mathscr{O}_{X,\xpoint}$-module $\Omega_{X, \xpoint}^{k}(\log f)$ is at most  $k$ for all $0 \leq k \leq n$; moreover, $f$ is simply tame if it so at each $\xpoint \in X.$ We say $f$ is \emph{free} at $\xpoint$ if $\Omega_{X,\xpoint}^{1}(\log f)$ is a free $\mathscr{O}_{X,\xpoint}$-module; $f$ is simply free if it is so at each $\xpoint \in X.$ 
\end{define}

While Saito (and others) originally considered the logarithmic $k$-forms, freeness, and tameness only for reduced divisors, we do not make this restriction. To our knowledge, \cite{uli} is one of the first places where these objects were defined in the non-reduced setting. In Appendix A we catalogue the basic properties of $\Omega_{X}^{k}(\log f)$ for non-reduced $f$ and show that, up to $\mathscr{O}_{X}$-module isomorphisms, the non-reduced logarithmic $k$-forms can be identified with the reduced logarithmic $k$-forms, cf. Proposition \ref{prop - appendix relating log forms along D to reduced D}.

\begin{remark} \label{rmk-log ddifferential forms}
\begin{enumerate}[(a)]
    \item The condition of freeness is stronger than tameness. By 1.7, 1.8 of \cite{SaitoLogarithmicForms}, if $f_{\red}$ is free, then $\Omega_{X,\xpoint}^{k}(\log f_{\red}) \simeq \bigwedge^{k} \Omega_{X,\xpoint}^{1}(\log f_{\red}).$ (Saito's originally argument assumes reducedness, but his argument for this item does not depend on this point. Also see item (c).)
    \item If $\dim X \leq 3$ then any $f \in \mathscr{O}_{X}$ is automatically tame. If $\dim X = 4$ then tameness is equivalent to $\text{proj dim } \Omega_{X,\xpoint}^{1}(\log f) \leq 1.$ These facts follow from reflexivity of the logarithmic differential forms, cf. 1.7 of \cite{SaitoLogarithmicForms} for one case. Tameness and freeness only depend on the reduced structure of the divisor cut out by $f$, cf. Corollary \ref{cor - appendix, listing similar properties for non-reduced}.
\end{enumerate}
\end{remark}

\section{Results on Multivariate Bernstein--Sato Ideals}

We will now assume $\text{Div}(f)$ is strongly Euler-homogeneous, Saito-holonomic and tame. In this section we prove the first three theorems from the Introduction. The general strategy is to pass the properties proved for $\gr^{\sharp}(\mathscr{D}_{X,\xpoint}[S]F^{S})$ in \cite{Bath1} to $\mathscr{D}_{X,\xpoint}[S]F^{S} / \mathscr{D}_{X,\xpoint}[S] F^{S+\textbf{a}}$ and then to use the definition of relative holonomic and Theorem \ref{thm-ZeroLociII results} to pass these properties to $\gr^{\rel}(\mathscr{D}_{X,\xpoint}[S]F^{S} / \mathscr{D}_{X,\xpoint}[S]F^{S+\textbf{a}})$. 

\subsection{Criteria for $\mathscr{D}_{X,\xpoint}[S]F^{S} / \mathscr{D}_{X,\xpoint}[S]F^{S+\textbf{a}}$ to be $(n+1)$-Cohen--Macaulay} \text{ }

In general, elements of $\mathscr{D}_{X}[S]$ that annihilate $F^{S}$ can be very complicated. The only such elements that are easy to identify are essentially encoded by the logarithmic derivations:

\begin{define} \label{def-gen by derivations}
There is an $\mathscr{O}_{X}[S]$-linear map $\psi_{F}: \Der_{X}(-\log f) \to \ann_{\mathscr{D}_{X}[S]} F^{S}$
\[
\psi_{F}(\delta) = \delta - \sum s_{k} \frac{\delta \bullet f_{k}}{f_{k}}.
\]
We say $\ann_{\mathscr{D}_{X}[S]}F^{S}$ is \emph{generated by derivations} if
\[
\ann_{\mathscr{D}_{X}[S]} F^{S} = \mathscr{D}_{X}[S] \cdot \psi_{F}(\Der_{X}(-\log f))
\]
\end{define}

By Theorem 2.29 of \cite{Bath1}, if $f$ is strongly Euler-homogeneous, Saito-holonomic, and tame, then $\ann_{\mathscr{D}_{X,\xpoint}}[S] F^{S}$ is generated by derivations. Moreover, by loc. cit., the associated graded object attached to the total order filtration has good properties:

\begin{theorem} \text{\normalfont (Theorem 2.23, Corollary 2.28, Theorem 2.29 \cite{Bath1})} \label{thm- gen by derivations}
Suppose that $f$ is strongly Euler-homogenous, Saito-holonomic, and tame and $F$ corresponds to any factorization $f = f_{1} \cdots f_{r}$. Then $\ann_{\mathscr{D}_{X}[S]} F^{S}$ is generated by derivations and $\gr^{\sharp}(\ann_{\mathscr{D}_{X}[S]} F^{S})$ is locally a Cohen--Macaulay, prime ideal of dimension $n+r.$
\end{theorem}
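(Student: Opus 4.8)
The plan is to prove both assertions simultaneously by showing that a \emph{logarithmic Spencer complex} $\mathscr{C}^{\bullet}_{F}$ is a free resolution of $\mathscr{D}_{X}[S]F^{S}$ over $\mathscr{D}_{X}[S]$, and then reading everything off its associated graded. Work locally at $\xpoint$. The case $\xpoint \notin \Variety(f)$ is immediate: there $\Der_{X,\xpoint}(-\log f) = \Der_{X,\xpoint}$, $\psi_{F}$ is the identity on derivations, $\mathscr{D}_{X,\xpoint}[S]F^{S} = \mathscr{O}_{X,\xpoint}[S]$, so $\ann_{\mathscr{D}_{X,\xpoint}[S]}F^{S} = \mathscr{D}_{X,\xpoint}[S]\cdot(\partial_{1},\dots,\partial_{n})$ is generated by derivations and $\gr^{\sharp}$ of it is $(y_{1},\dots,y_{n})$, which is prime, Cohen--Macaulay, of dimension $n+r$. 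So assume $\xpoint \in \Variety(f)$. Fix an $\mathscr{O}_{X,\xpoint}$-presentation $\mathscr{O}_{X,\xpoint}^{m} \twoheadrightarrow \Der_{X,\xpoint}(-\log f)$ with generators $\theta_{1},\dots,\theta_{m}$, set $\eta_{i} := \psi_{F}(\theta_{i}) \in \ann_{\mathscr{D}_{X,\xpoint}[S]}F^{S}$, and assemble the $\eta_{i}$ together with the higher syzygy data of $\Der_{X,\xpoint}(-\log f)$ (entering via the Lie bracket twisted by $\psi_{F}$) into a complex $\mathscr{C}^{\bullet}_{F}$ of free $\mathscr{D}_{X,\xpoint}[S]$-modules augmented onto $\mathscr{D}_{X,\xpoint}[S]F^{S}$; this is the multivariate analogue of Walther's twisted logarithmic de Rham/Spencer complex from \cite{uli} (and of the free-divisor construction of Calder\'on-Moreno and Narv\'aez-Macarro). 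Exactness of $\mathscr{C}^{\bullet}_{F} \to \mathscr{D}_{X,\xpoint}[S]F^{S}$ at the spot $\mathscr{C}_{0} = \mathscr{D}_{X,\xpoint}[S]$ says exactly that $\ann_{\mathscr{D}_{X}[S]}F^{S} = \mathscr{D}_{X}[S]\cdot\psi_{F}(\Der_{X}(-\log f))$, that is, that it is generated by derivations, and the $\gr^{\sharp}$ statement will come out of the same acyclicity.

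Everything rests on one fact: \emph{$\gr^{\sharp}\mathscr{C}^{\bullet}_{F}$ is acyclic in positive homological degrees}. Since the total order filtration is Zariskian, this forces (by the usual strictness argument for Zariskian filtrations) $\mathscr{C}^{\bullet}_{F}$ itself to be acyclic there --- hence $\ann F^{S}$ is generated by derivations --- and it forces $\gr^{\sharp}(\ann_{\mathscr{D}_{X}[S]}F^{S})$ to coincide with the symbol ideal $\mathfrak{b} := (\sigma_{1},\dots,\sigma_{m}) \subseteq R := \gr^{\sharp}\mathscr{D}_{X,\xpoint}[S] \simeq \mathscr{O}_{X,\xpoint}[y_{1},\dots,y_{n},s_{1},\dots,s_{r}]$, where $\sigma_{i}$ is the total-order principal symbol of $\eta_{i}$, a form linear in $(y,s)$, namely $\sigma_{i} = \sum_{j} a_{ij}y_{j} - \sum_{k} \tfrac{\theta_{i}\bullet f_{k}}{f_{k}}s_{k}$ with $\theta_{i} = \sum_{j} a_{ij}\partial_{j}$; and $\gr^{\sharp}\mathscr{C}^{\bullet}_{F}$ then furnishes a finite free $R$-resolution of $R/\mathfrak{b}$. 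So the theorem reduces to three properties of $R/\mathfrak{b}$: it has dimension $n+r$; it is Cohen--Macaulay; and $\mathfrak{b}$ is prime. The last is formal once the first two hold: over $X \setminus \Variety(f)$ the locus $\Variety(\mathfrak{b})$ is the graph $\{\, y = -\sum_{k} s_{k}\, d\log f_{k} \,\} \cong (X \setminus \Variety(f)) \times \mathbb{C}^{r}$, irreducible of dimension $n+r$, so if $\dim R/\mathfrak{b} = n+r$ then $\Variety(\mathfrak{b})$ is irreducible of codimension $n$, and a Cohen--Macaulay ring that is generically reduced (it is, being regular at that generic point, where the $\sigma_{i}$ restrict to a regular sequence) is reduced, so $\mathfrak{b}$ is the corresponding prime.

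The inequality $\dim R/\mathfrak{b} \le n+r$ is where \emph{Saito-holonomicity} is used: stratifying $\Variety(\mathfrak{b})$ over the (locally finitely many) logarithmic strata $W$, the $y$-parts of the $\sigma_{i}$ cut the cotangent fibre down to a piece of the conormal $\overline{T^{\star}_{W}X}$, which contributes dimension at most $n$, leaving at most $r$ further directions; \emph{strong Euler-homogeneity} enters here too, guaranteeing that the $s$-directions contribute no more, via the identity $\sum_{k}\tfrac{E\bullet f_{k}}{f_{k}} = 1$ for the Euler field $E$ (so $\mathfrak{b}$ contains a symbol with nonzero $s$-part). The Cohen--Macaulay property of $R/\mathfrak{b}$ is the crux, and the place where \emph{tameness} is indispensable. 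If $f$ is free, $\Der_{X,\xpoint}(-\log f)$ is free of rank $n$, so $\mathfrak{b}$ is generated by $n$ elements; by the dimension bound they form an $R$-regular sequence, the Koszul complex on them is the required length-$n$ resolution, and Cohen--Macaulayness is immediate --- no syzygy data is needed and $\mathscr{C}^{\bullet}_{F}$ is the usual logarithmic Spencer complex. In the tame but non-free case one must feed a minimal free $\mathscr{O}_{X,\xpoint}[S]$-resolution of $\Der_{X,\xpoint}(-\log f)$ --- whose length is controlled by the tameness bound $\mathrm{pd}\,\Omega^{k}_{X,\xpoint}(\log f) \le k$ through the duality between $\Omega^{\bullet}(\log f)$ and $\bigwedge^{\bullet}\Der_{X}(-\log f)$ --- into the Spencer construction, and then show by a mapping-cone or spectral-sequence argument that $\gr^{\sharp}\mathscr{C}^{\bullet}_{F}$ is still acyclic in positive degrees and still has projective length exactly $n = \codim\mathfrak{b}$, whence $R/\mathfrak{b}$ is Cohen--Macaulay by Auslander--Buchsbaum. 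I expect this last homological step to be the main obstacle: for free divisors it collapses to the one-line regular-sequence argument above, whereas in general it requires the full strength of the tameness hypothesis and a careful interplay between the higher syzygies of $\Der_{X}(-\log f)$ and the twisted de Rham differential so that no spurious homology survives and the length does not grow. Once it is in place, $\gr^{\sharp}\mathscr{C}^{\bullet}_{F}$ is acyclic in positive degrees, hence so is $\mathscr{C}^{\bullet}_{F}$, and both conclusions follow: $\ann_{\mathscr{D}_{X}[S]}F^{S}$ is generated by derivations, and $\gr^{\sharp}(\ann_{\mathscr{D}_{X}[S]}F^{S}) = \mathfrak{b}$ is a prime, Cohen--Macaulay ideal with $\dim R/\mathfrak{b} = n+r$.
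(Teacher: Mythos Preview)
The paper does not prove this theorem; it is imported from \cite{Bath1} (which in turn follows the univariate template of \cite{uli}). Your outline identifies the correct ingredients --- Saito-holonomicity for the dimension bound on $\Variety(\mathfrak{b})$, strong Euler-homogeneity for non-degeneracy in the $s$-directions, tameness for Cohen--Macaulayness --- and your derivation of primeness from Cohen--Macaulayness plus generic reducedness is exactly how it is done there. But the packaging via a ``Spencer complex'' over $\mathscr{D}_{X}[S]$ is not how the argument actually runs, and the step you yourself flag as ``the main obstacle'' is a genuine gap, not a routine lemma.

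In \cite{uli} and \cite{Bath1} the logic is the reverse of yours: one works first entirely on the commutative side and proves the properties of the symbol ideal $\mathfrak{b}\subseteq R:=\gr^{\sharp}\mathscr{D}_{X,\xpoint}[S]$ directly. The dimension bound is an induction over the logarithmic strata (Saito-holonomicity gives local finiteness; strong Euler-homogeneity lets one, at a point of each stratum, split off the Euler direction and reduce to a transverse slice of lower dimension). Cohen--Macaulayness of $R/\mathfrak{b}$ is obtained not from any Spencer-type complex over $\mathscr{D}_{X}[S]$ but from the \emph{Liouville complex}: a complex of $R$-modules built from $\Omega_{X,\xpoint}^{\bullet}(\log f)$ (not from syzygies of $\Der_{X}(-\log f)$), whose acyclicity is an approximation-complex/depth argument that consumes the tameness bound $\mathrm{pd}\,\Omega_{X,\xpoint}^{k}(\log f)\le k$ precisely. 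Only after $\mathfrak{b}$ is known to be prime does one show $\gr^{\sharp}(\ann F^{S})=\mathfrak{b}$: the containment $\mathfrak{b}\subseteq\gr^{\sharp}(\ann F^{S})$ is obvious, and for the reverse one checks both ideals cut out the same variety and invokes primeness of $\mathfrak{b}$. That $\ann F^{S}$ is generated by derivations then follows from this graded equality by a filtered Nakayama-type argument. Your route instead tries to build a filtered free $\mathscr{D}_{X,\xpoint}[S]$-resolution of $\mathscr{D}_{X,\xpoint}[S]F^{S}$ a priori; in the non-free case there is no canonical such complex --- the Lie bracket does not propagate through higher $\mathscr{O}_{X}$-syzygies of $\Der_{X}(-\log f)$, so ``feed the syzygy data into the Spencer construction'' hides rather than solves the problem --- and even if one were assembled, proving acyclicity of its associated graded would force you back to the Liouville-complex analysis of $\mathfrak{b}$ anyway.
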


The Cohen--Macaulay properties for $\gr^{\sharp}(\mathscr{D}_{X,\xpoint}[S]F^{S})$ descend to $\mathscr{D}_{X,\xpoint}[S]F^{S}:$

\begin{proposition} \label{prop- F^S CM}
Suppose that $f$ is strongly Euler-homogenous, Saito-holonomic, and tame and $F$ corresponds to any factorization $f = f_{1} \cdots f_{r}$. Then $\mathscr{D}_{X, \xpoint}[S] F^{S}$ is $n$-Cohen--Macaulay.
\end{proposition}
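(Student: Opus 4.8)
The plan is to descend, via the total order filtration, the commutative statement of Theorem~\ref{thm- gen by derivations} to the cyclic module $M := \mathscr{D}_{X,\xpoint}[S]F^{S} \simeq \mathscr{D}_{X,\xpoint}[S]/\ann_{\mathscr{D}_{X,\xpoint}[S]}F^{S}$. Writing $A := \mathscr{D}_{X,\xpoint}[S]$, I would equip $M$ with the good filtration induced from the total order filtration on $A$; since $M$ is cyclic, generated by the class of $F^{S}$, this filtration is good and
\[
\gr^{\sharp} M \simeq R/I, \qquad R := \gr^{\sharp} A \simeq \mathscr{O}_{X,\xpoint}[y_{1},\dots,y_{n},s_{1},\dots,s_{r}], \qquad I := \gr^{\sharp}(\ann_{A}F^{S}),
\]
where $R$ is a commutative Noetherian regular ring, naturally graded with $\deg y_{i}=\deg s_{k}=1$ and $\deg \mathscr{O}_{X,\xpoint}=0$, and $I$ is a homogeneous ideal. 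By Theorem~\ref{thm- gen by derivations}, $I$ is a Cohen--Macaulay prime ideal of $R$ with $\dim R/I = n+r$.

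The heart of the matter is then the purely commutative claim that $R/I$ is an $n$-Cohen--Macaulay $R$-module in the sense of Definition~\ref{def- CM for Auslander regular}, i.e. $\Ext^{k}_{R}(R/I,R)=0$ exactly when $k\neq n$. First I would verify $\operatorname{grade}(I)=n$: being homogeneous and proper, $I$ lies in the graded maximal ideal $\mathfrak{m}:=\mathfrak{m}_{\mathscr{O}_{X,\xpoint}}+(y,s)$, and since $R$ is Cohen--Macaulay while the minimal primes of $I$ are homogeneous, $\operatorname{grade}(I)=\operatorname{height}(I)=\operatorname{height}(I_{\mathfrak{m}})$; localizing at $\mathfrak{m}$ produces a surjection $R_{\mathfrak{m}}\twoheadrightarrow (R/I)_{\mathfrak{m}}$ of Cohen--Macaulay local rings with $\dim R_{\mathfrak{m}}=2n+r$ and $\dim (R/I)_{\mathfrak{m}}=\dim (R/I)=n+r$, so $\operatorname{height}(I_{\mathfrak{m}})=n$. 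Then, because $R/I$ is Cohen--Macaulay over the regular ring $R$ and $I$ is prime (hence $\operatorname{height}(IR_{\mathfrak{p}})=n$ for every $\mathfrak{p}\supseteq I$), localizing at each $\mathfrak{p}$ and invoking Auslander--Buchsbaum gives $\operatorname{pd}_{R_{\mathfrak{p}}}(R/I)_{\mathfrak{p}}=n$; thus $I$ is perfect and $\Ext^{k}_{R}(R/I,R)=0$ for $k\neq n$, while $\Ext^{n}_{R}(R/I,R)\neq 0$ (seen at the generic point $R_{I}$, regular local of dimension $n$).

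Finally I would pull this back to $M$. By Lemma~\ref{lemma-good filtration on ext} there is, for each $k$, a good filtration on $\Ext^{k}_{A}(M,A)$ with $\gr\Ext^{k}_{A}(M,A)$ a subquotient of $\Ext^{k}_{R}(\gr^{\sharp}M,R)=\Ext^{k}_{R}(R/I,R)$; for $k\neq n$ this target is $0$, and since $A$ is Zariskian filtered a finitely generated module with vanishing associated graded is itself zero, so $\Ext^{k}_{A}(M,A)=0$ for all $k\neq n$. Non-vanishing in degree $n$ follows from Proposition~\ref{prop- M pure grade is associated graded grade}: $j(M)=j(\gr^{\sharp}M)=\operatorname{grade}(R/I)=n$, whence $\Ext^{n}_{A}(M,A)\neq 0$. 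Therefore $M$ is $n$-Cohen--Macaulay.

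I expect the main obstacle to be bookkeeping rather than anything conceptual: the geometry (strong Euler-homogeneity, Saito-holonomicity, tameness) has already been consumed in Theorem~\ref{thm- gen by derivations}, and what is left is the descent through the filtration. The one genuine subtlety is that $R=\mathscr{O}_{X,\xpoint}[y,s]$ is \emph{not} equidimensional as a ring, so the naive identity ``$\operatorname{codim} I = \dim R - \dim R/I$'' is not available verbatim; this is precisely why the homogeneity of $I$ is load-bearing — it forces $I\subseteq\mathfrak{m}$ and lets the dimension count be performed in the well-behaved local ring $R_{\mathfrak{m}}$ — and why ``$\dim R/I = n+r$'' must be read as a statement that persists after localizing at the graded maximal ideal (which it does, $R/I$ being graded over a local base).
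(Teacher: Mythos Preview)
Your proposal is correct and follows essentially the same route as the paper: both arguments use Theorem~\ref{thm- gen by derivations} to obtain the Cohen--Macaulay property of $\gr^{\sharp}M \simeq R/I$, reduce the commutative Ext-vanishing to a computation after localizing at the graded maximal ideal (where Auslander--Buchsbaum applies), then transfer back via Lemma~\ref{lemma-good filtration on ext} and Proposition~\ref{prop- M pure grade is associated graded grade}. Your explicit flagging of the non-equidimensionality of $R=\mathscr{O}_{X,\xpoint}[y,s]$ and the load-bearing role of homogeneity is exactly the point the paper handles by citing the graded-local reduction (Proposition~1.5.15(c) of \cite{BrunsHerzog}); the remaining differences---your use of ``perfect ideal'' language versus the paper's explicit depth-via-regular-sequence computation of $j(M)\geq \operatorname{height}\ann M$---are cosmetic.
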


\begin{proof}
We first show that the hypotheses imply $\gr^{\sharp}(\mathscr{D}_{X, \xpoint}[S]) / \gr^{\sharp}(\ann_{\mathscr{D}_{X, \xpoint}[S]} F^{S})$ is a $n$-Cohen--Macaulay $\gr^{\sharp}(\mathscr{D}_{X, \xpoint}[S])$-module. Since this module is a graded local $\gr^{\sharp}(\mathscr{D}_{X,\xpoint}[S])$-module (with respect to the grading induced by the total order filtration), the vanishing of $\Ext$-modules is a graded local condition. (See Proposition 1.5.15(c) of \cite{BrunsHerzog} where faithful exactness of localizations at graded maximal ideals is proved.) Thus, it suffices to show $\gr^{\sharp}(\mathscr{D}_{X, \xpoint}[S]) / \gr^{\sharp}(\ann_{\mathscr{D}_{X, \xpoint}[S]} F^{S})$ is a $n$-Cohen--Macaulay module after localizing at the graded maximal ideal. This follows from Theorem \ref{thm- gen by derivations} and a routine commutative algebra argument we summarize below.

Suppose $R$ is a commutative, Cohen--Macaulay, Noetherian, local ring and $M$ a finitely generated Cohen--Macaulay $R$-module of dimension $\dim M$. We claim $M$ is $(\text{codim } M)$-Cohen--Macaualy. By Auslander-Buchsbaum and since $M$ is Cohen--Macaulay, we know that $\text{dim} M = \text{depth} M$ = $\text{depth } R - \text{proj dim } M.$ So $\text{proj dim} M = \text{codim } M. $ Since $R$ is local, Cohen-Macaulay and $M$ is finitely generated, $\text{height} \ann M + \dim M = \dim R.$ Thus $\text{proj dim} M = \text{height} \ann M = \text{codim } M$. Let $j(M)$ be the grade of $M$. Then to show $M$ is $(\text{codim} M)$--Cohen--Macaulay it suffices to show $j(M) = \text{proj dim } M$. Certainly $j(M) \leq \text{proj dim } M$.  On the other hand, let $\ell$ be the length of a maximal regular $R$-sequence in $\ann M$. Then $0 \to R \to R \to R / R \cdot x \to 0$ induces the short exact sequence $0 \to \Ext_{R}^{i-1}(M, R) \to \Ext_{R}^{i-1}(M, R / R \cdot x) \to \Ext_{R}^{i}(M, R) \to 0.$ So if $\Ext_{R}^{i}(M, R) \neq 0$, then $\Ext_{R}^{i-1}(M, R / R \cdot x) \neq 0.$ Iterating this procedure, we see that $j(M) \geq \ell. $ Since $R$ is local and Cohen--Macaulay, $\ell = \text{height} \ann M$ and the claim is proved. 

So the above argument and Theorem \ref{thm- gen by derivations} imply $\gr^{\sharp}(\mathscr{D}_{X, \xpoint}[S]) / \gr^{\sharp}(\ann_{\mathscr{D}_{X,\xpoint}}[S]F^{S})$ is $n$-Cohen--Macaulay. Since $\mathscr{D}_{X,\xpoint}[S]F^{S}$ is cyclically generated by $F^{S}$, we have the obvious isomorphism $\gr^{\sharp}(\mathscr{D}_{X,\xpoint}[S] F^{S}) \simeq \gr^{\sharp}(\mathscr{D}_{X, \xpoint}[S]) / \gr^{\sharp}(\ann_{\mathscr{D}_{X,\xpoint}}[S]F^{S})$. Recall that we can apply the lemmas in subsection 2.1 to $\mathscr{D}_{X,\xpoint}[S]$ and the total order filtration. By Lemma \ref{lemma-good filtration on ext} we see that there, for each $k$, there is a good filtration $\Gamma$ on $\Ext_{\mathscr{D}_{X,\xpoint}[S]}^{k}(\mathscr{D}_{X,\xpoint}[S]F^{S}, \mathscr{D}_{X,\xpoint}[S])$ such that 
\[
\gr^{\Gamma}( \Ext_{\mathscr{D}_{X,\xpoint}[S]}^{k}(\mathscr{D}_{X,\xpoint}[S]F^{S}, \mathscr{D}_{X,\xpoint}[S]))
\]
is a subquotient of
\[
\Ext_{\gr^{\sharp}(\mathscr{D}_{X,\xpoint}[S])}^{k}(\gr^{\sharp}(\mathscr{D}_{X,\xpoint}[S]F^{S}), \gr^{\sharp}(\mathscr{D}_{X,\xpoint}[S])).
\]
Since $\mathscr{D}_{X,\xpoint}[S]$ is positively filtered by the total order filtration, for any good filtration on a finitely generated, left $\mathscr{D}_{X,\xpoint}[S]$-module $M$, we know that if $M$ is nonzero then $\gr M$ is nonzero. Therefore
\[
\Ext_{\mathscr{D}_{X,\xpoint}[S]}^{k}(\mathscr{D}_{X,\xpoint}[S]F^{S}, \mathscr{D}_{X,\xpoint}[S]) = 0 \text{ for } k \neq n.
\]
That 
\[
\Ext_{\mathscr{D}_{X,\xpoint}[S]}^{n}(\mathscr{D}_{X,\xpoint}[S]F^{S}, \mathscr{D}_{X,\xpoint}[S]) \neq 0
\]
follows from, for instance, Proposition \ref{prop- M pure grade is associated graded grade} and our computation of the grade of $\gr^{\sharp}(\mathscr{D}_{X,\xpoint}[S] F^{S})$ above. 
\end{proof}

The Cohen--Macaulay property descends further to $\mathscr{D}_{X}[S]F^{S} / \mathscr{D}_{X}[S] F^{S+\textbf{a}}$:

\begin{theorem} \label{thm- F^S / F^S+1 CM}
Suppose that $f$ is strongly Euler-homogenous, Saito-holonomic, and tame, $F$ corresponds to any factorization $f = f_{1} \cdots f_{r}$, and $f^{\textbf{a}}$ is not a unit. Then 
\[
\frac{\mathscr{D}_{X, \xpoint}[S] F^{S}}{\mathscr{D}_{X, \xpoint}[S]F^{S+\textbf{a}}} \text{ is $(n+1)$-Cohen--Macaulay.}
\]
\end{theorem}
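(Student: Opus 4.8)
The plan is to leverage Proposition \ref{prop- F^S CM}, which gives that both $\mathscr{D}_{X,\xpoint}[S]F^{S}$ and $\mathscr{D}_{X,\xpoint}[S]F^{S+\textbf{a}}$ are $n$-Cohen--Macaulay, and then run a short exact sequence argument. The key input I expect to need is that $\gr^{\sharp}(\ann_{\mathscr{D}_{X,\xpoint}[S]} F^{S}) = \gr^{\sharp}(\ann_{\mathscr{D}_{X,\xpoint}[S]} F^{S+\textbf{a}})$ with respect to the total order filtration, so that $\mathscr{D}_{X,\xpoint}[S]F^{S+\textbf{a}}$ is also $n$-Cohen--Macaulay (this is the analogue of Remark \ref{rmk- F^S+1 CM} in the $\textbf{a}=\textbf{1}$ case; the same proof from Theorem 2.20 of \cite{Bath2} should carry over, or one argues directly that multiplication by $f^{\textbf{a}}$ does not change the symbol ideal since the $s_k$-linear correction terms $s_k (\delta \bullet f_k)/f_k$ have strictly lower total order than the top-order symbol of $\delta$). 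Both modules are cyclic, with isomorphic associated graded quotients of $\gr^{\sharp}(\mathscr{D}_{X,\xpoint}[S])$, hence each is $n$-Cohen--Macaulay by exactly the argument of Proposition \ref{prop- F^S CM}.

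Next I would consider the short exact sequence
\[
0 \to \mathscr{D}_{X,\xpoint}[S]F^{S+\textbf{a}} \xrightarrow{\iota} \mathscr{D}_{X,\xpoint}[S]F^{S} \to \frac{\mathscr{D}_{X,\xpoint}[S]F^{S}}{\mathscr{D}_{X,\xpoint}[S]F^{S+\textbf{a}}} \to 0,
\]
which is genuinely short exact provided the inclusion is strict, i.e. provided $f^{\textbf{a}}$ is not a unit (this is where that hypothesis enters). Applying $\Ext_{\mathscr{D}_{X,\xpoint}[S]}^{\bullet}(-, \mathscr{D}_{X,\xpoint}[S])$ and using that the outer two modules have $\Ext^{k}$ vanishing for all $k \neq n$, the long exact sequence immediately forces $\Ext^{k}_{\mathscr{D}_{X,\xpoint}[S]}(\mathscr{D}_{X,\xpoint}[S]F^{S}/\mathscr{D}_{X,\xpoint}[S]F^{S+\textbf{a}}, \mathscr{D}_{X,\xpoint}[S]) = 0$ for every $k \neq n, n+1$. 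So the only remaining task is to show $\Ext^{n}$ of the quotient vanishes while $\Ext^{n+1}$ does not; equivalently, that the grade of the quotient is exactly $n+1$.

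For the grade statement I would invoke Theorem \ref{thm-ZeroLociII results}(ii), which already asserts that $\mathscr{D}_{X,\xpoint}[S]F^{S}/\mathscr{D}_{X,\xpoint}[S]F^{S+\textbf{a}}$ has grade $n+1$ (this holds whenever $f^{\textbf{a}}$ is not a unit, independent of the tameness hypotheses). This gives $\Ext^{k}=0$ for $k < n+1$, in particular for $k=n$, and $\Ext^{n+1} \neq 0$. Combining with the previous paragraph's vanishing for $k > n+1$, we get $\Ext^{k} = 0$ precisely when $k \neq n+1$, which is exactly the definition of $(n+1)$-Cohen--Macaulay. Alternatively, if one wants a self-contained argument not citing \cite{ZeroLociII} for the grade, one can extract $\Ext^{n}(\text{quotient}) = 0$ from the long exact sequence by identifying the connecting map $\Ext^{n}(\mathscr{D}_{X,\xpoint}[S]F^{S+\textbf{a}}, \mathscr{D}_{X,\xpoint}[S]) \to \Ext^{n+1}(\text{quotient}, \mathscr{D}_{X,\xpoint}[S])$ — one needs the induced map $\Ext^n(\iota)$ on the top $\Ext$-modules to be injective, which follows from a purity/torsion-freeness argument: $\Ext^n$ of an $n$-pure module is again well-behaved, and $\iota$ being injective with nontrivial cokernel should make $\Ext^n(\iota)$ injective by an Auslander-regularity argument in the spirit of Theorem A.IV.2.12 of \cite{Bjork}.

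The main obstacle, I expect, is the symbol-ideal equality $\gr^{\sharp}(\ann F^{S}) = \gr^{\sharp}(\ann F^{S+\textbf{a}})$ for general $\textbf{a}$: while for $\textbf{a}=\textbf{1}$ it is recorded in \cite{Bath2}, for arbitrary $\textbf{a}$ one must check that multiplying the generating logarithmic operators $\psi_F(\delta)$ by $f^{\textbf{a}}$ (and clearing denominators appropriately) does not enlarge or shrink the total-order symbol ideal — this uses strong Euler-homogeneity (to produce the Euler operator adjusting exponents) together with the "generated by derivations" conclusion of Theorem \ref{thm- gen by derivations}. Everything downstream of that is a formal homological-algebra consequence of Auslander regularity.
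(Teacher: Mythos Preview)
Your proposal is correct and follows essentially the same route as the paper: establish $\gr^{\sharp}(\ann F^{S}) = \gr^{\sharp}(\ann F^{S+\textbf{a}})$ via Theorem 2.20 and Remark 2.19 of \cite{Bath2} (the ``compatible'' framework there covers general $\textbf{a}$, not just $\textbf{a}=\textbf{1}$), deduce both cyclic modules are $n$-Cohen--Macaulay by the argument of Proposition \ref{prop- F^S CM}, run the long exact sequence to kill $\Ext^{k}$ for $k\neq n,n+1$, and then invoke Theorem \ref{thm-ZeroLociII results}(ii) for the grade. The paper does not pursue your alternative self-contained route via injectivity of $\Ext^{n}(\iota)$; it simply cites the grade result from \cite{ZeroLociII}.
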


\begin{proof} 
We can generalize Proposition \ref{prop- F^S CM} and show that $\mathscr{D}_{X,\xpoint}[S]F^{S+\textbf{a}}$ is $n$-Cohen--Macaulay even if $\textbf{a} \neq \textbf{0}$. In the language of \cite{Bath2}, $\mathscr{D}_{X,\xpoint}[S]F^{S+\textbf{a}} = \mathscr{D}_{X,\xpoint}[S] f^{\textbf{a}} F^{S}$ where $f^{\textbf{a}} = f_{1}^{a_{1}} \cdots f_{r}^{a_{r}}$ is certainly compatible with respect to $f$, cf. Definition 2.14 of loc. cit. Thus we may use the results of Section 2 of \cite{Bath2}. In particular, combining Theorem 2.20 and Remark 2.19 of \cite{Bath2} along with Corollary 2.28 of \cite{Bath1} we deduce
\[
\gr^{\sharp} (\ann_{\mathscr{D}_{X,\xpoint}[S]} F^{S}) = \gr^{\sharp} (\ann_{\mathscr{D}_{X,\xpoint}[S]} F^{S + \textbf{a}}).
\]
So we may repeat the argument of Proposition \ref{prop- F^S CM} for $\mathscr{D}_{X,\xpoint}[S]F^{S + \textbf{a}}$ and deduce this module is $n$-Cohen--Macaulay. (Note that Proposition \ref{prop- M pure grade is associated graded grade} also shows the grade of $\mathscr{D}_{X,\mathfrak{x}}[S] F^{S+\textbf{a}}$ equals the grade of $\gr^{\sharp}(\mathscr{D}_{X,\xpoint}[S] F^{S+\textbf{a}})$ so the final sentence of the proof of Proposition \ref{prop- F^S CM} also holds in this setting.)

Now consider short exact sequence of $\mathscr{D}_{X, \xpoint}[S]$-modules:
\[
0 \xrightarrow{} \mathscr{D}_{X,\xpoint}[S]F^{S+\textbf{a}} \xrightarrow{} \mathscr{D}_{X,\xpoint}[S]F^{S} \xrightarrow{} \frac{\mathscr{D}_{X, \xpoint}[S] F^{S}}{\mathscr{D}_{X, \xpoint}[S]F^{S+\textbf{a}}} \xrightarrow{} 0.
\]
Apply $\Hom_{\mathscr{D}_{X,\xpoint}[S]}(-, \mathscr{D}_{X,\xpoint}[S])$ and consider the canonical long exact sequence of $\Ext$ modules. Because $\mathscr{D}_{X,\xpoint}[S]F^{S}$ and $\mathscr{D}_{X,\xpoint}[S]F^{S+\textbf{a}}$ are both $n$-Cohen--Macaulay, clearly
\[
\Ext_{\mathscr{D}_{X,\xpoint}[S]}^{k}(\mathscr{D}_{X, \xpoint}[S]F^{S} / \mathscr{D}_{X,\xpoint}[S]F^{S+\textbf{a}}, \mathscr{D}_{X,\xpoint}[S]) = 0 \text{ for all } k \neq n, n+1.
\]
By Theorem \ref{thm-ZeroLociII results}, the grade of $\mathscr{D}_{X, \xpoint}[S]F^{S} / \mathscr{D}_{X, \xpoint}[S]F^{S+\textbf{a}}$ is $n+1$.
\end{proof}

\subsection{Applications to the Bernstein--Sato Ideal} \text{ }

It behooves us to study a larger class of $\mathbb{C}[S]$-annihilators of finite $\mathscr{D}_{X,\mathfrak{x}}[S]$-modules.

\begin{define} \label{def-BS ideal general}
For a finite $\mathscr{D}_{X,\mathfrak{x}}[S]$-module $M$, its \emph{Bernstein--Sato ideal} is
\[
B_{M} = \ann_{\mathbb{C}[S]} M.
\]
Note that, as one would hope, $B_{F,\mathfrak{x}}^{\textbf{a}}$ is shorthand for the Bernstein--Sato ideal of $\mathscr{D}_{X,\mathfrak{x}}[S]F^{S} / \mathscr{D}_{X,\mathfrak{x}}[S]F^{S+\textbf{a}}$.
\end{define}

While our primary interest is $\mathscr{D}_{X,\mathfrak{x}}[S]F^{S} / \mathscr{D}_{X,\mathfrak{x}}[S]F^{S+\textbf{a}}$, the crucial property Theorem \ref{thm-ZeroLociII results}.(iv) about Bernstein--Sato ideals applies more generally: if $M$ is relative holonomic, and with $p_{2}: T^{\star}X \times \Spec \mathbb{C}[S] \to \Spec \mathbb{C}[S]$ the canonical projection, then $Z(B_{M}) = p_{2}(\Ch^{\rel} M)$, cf. Lemma 3.4.1 of \cite{ZeroLociI}. Also note that if $N \subseteq M$ are finite $\mathscr{D}_{X,\mathfrak{x}}[S]$-modules such that $M$ is relative holonomic, then relative holonomicity descends to $N$, cf. Lemma 3.2.2 and Proposition 3.2.5 of \cite{ZeroLociI}. (The essence of the latter proposition originally appears in \cite{Maisonobe} using Maisonobe's terminology \emph{major\'e par une lagrangienne}). 

One difficulty is that the first item of Proposition \ref{prop- pure iff associated prime same heights} does not apply for a polynomial ring over the stalk of the analytic structure sheaf $\mathscr{O}_{X,\mathfrak{x}}$: there are maximal ideals of different heights. Nevertheless, Maisonobe showed the desired form of Proposition \ref{prop- pure iff associated prime same heights}.(1) still does hold. We state a useful version combining $\mathscr{D}_{X,\mathfrak{x}}[S]$-data and associated graded data--recall grade can be computed on either side, cf. Proposition \ref{prop- M pure grade is associated graded grade}. Namely:
if $M$ is a finite $\mathscr{D}_{X,\mathfrak{x}}[S]$-module then
\begin{equation} \label{eqn- local analytic dimension grade fml}
j(M) + \dim (\Ch^{\rel} M) = 2n + r. 
\end{equation}
(Recall that $\mathscr{D}_{X,\mathfrak{x}}[S] = \mathscr{D}_{X,\mathfrak{x}}[s_{1}, \dots, s_{r}]$.) Maisonobe's reasoning is succinctly described in more generality, and in English, in section 3.6 of \cite{ZeroLociI} where this formula, as well as the philosophy for turning algebraic results into local analytic ones, is detailed.

Armed with these facts we can give criterion for a Bernstein--Sato ideal to be principal. The proof non-trivially builds on (and reveals the versatility of) an idea of Maisonobe's originating in Theorem 2 of \cite{MaisonobeFreeHyperplanes}, that we also used in Proposition 3.13 of \cite{Bath2}.

\begin{theorem} \label{thm- purity implies principality} Suppose that $M$ is a finite $\mathscr{D}_{X,\mathfrak{x}}[S]$-module that is relative holonomic and $(n+1)$-pure. Then its Bernstein--Sato ideal $B_{M}$ is principal.
\end{theorem}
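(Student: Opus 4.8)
The plan is to work directly with the ideal $B_{M} = \ann_{\mathbb{C}[S]}M \subseteq \mathbb{C}[S]$ and to show that it is a nonzero proper ideal all of whose associated primes are minimal of height one; since $\mathbb{C}[S] = \mathbb{C}[s_{1},\dots,s_{r}]$ is a unique factorization domain this forces $B_{M}$ to be principal, by the routine argument recorded below. One uses repeatedly that the facts recalled just after Definition \ref{def-BS ideal general} — that $Z(B_{N}) = p_{2}(\Ch^{\rel}(N))$ for relative holonomic $N$ (Lemma 3.4.1 of \cite{ZeroLociI}), the relation \eqref{eqn- local analytic dimension grade fml}, and the fact that relative holonomicity descends to submodules — hold for \emph{any} finite $\mathscr{D}_{X,\mathfrak{x}}[S]$-module, not just for $\mathscr{D}_{X,\mathfrak{x}}[S]F^{S}/\mathscr{D}_{X,\mathfrak{x}}[S]F^{S+\textbf{a}}$.

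\textbf{Step 1 (minimal primes have height one).} First I would prove: if $M'$ is a finite $\mathscr{D}_{X,\mathfrak{x}}[S]$-module that is relative holonomic and $(n+1)$-pure, then $B_{M'} := \ann_{\mathbb{C}[S]}M'$ is a nonzero proper ideal whose minimal primes all have height one. By Lemma \ref{lemma-criterion for purity}(iii) there is a good filtration on $M'$ for which $\gr^{\rel}M'$ is a pure $\gr^{\rel}(\mathscr{D}_{X,\mathfrak{x}}[S])$-module, of grade $n+1$ by Proposition \ref{prop- M pure grade is associated graded grade}; since $\gr^{\rel}(\mathscr{D}_{X,\mathfrak{x}}[S]) \cong \mathscr{O}_{X,\mathfrak{x}}[y_{1},\dots,y_{n},s_{1},\dots,s_{r}]$ is a regular Noetherian domain, Proposition \ref{prop- pure iff associated prime same heights}(2) shows every irreducible component of $\Ch^{\rel}(M')$ has codimension $n+1$ in $\gr^{\rel}(\mathscr{D}_{X,\mathfrak{x}}[S])$. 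Because $M'$ is relative holonomic (Definition \ref{def- relative holonomic}), each such component has the form $\Lambda_{\alpha}\times S_{\alpha}$ with $\Lambda_{\alpha}$ a conical Lagrangian in $T^{\star}X$, hence of codimension $n$, and $S_{\alpha}$ an irreducible subvariety of $\mathbb{C}^{r}$; since codimension is additive for products, $\codim_{\mathbb{C}^{r}}S_{\alpha} = 1$. Therefore $Z(B_{M'}) = p_{2}(\Ch^{\rel}(M')) = \bigcup_{\alpha}S_{\alpha}$ is a nonempty closed subset of $\mathbb{C}^{r}$ of pure codimension one; equivalently $B_{M'}$ is nonzero and proper and all its minimal primes have height one. (That $\dim \Ch^{\rel}(M') = n+r-1$ by \eqref{eqn- local analytic dimension grade fml} is a consistency check.)

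\textbf{Step 2 (no embedded primes).} This is the heart of the matter; the mechanism — replacing an associated prime by the annihilator of a submodule — is the idea of Maisonobe from Theorem 2 of \cite{MaisonobeFreeHyperplanes}, also used in Proposition 3.13 of \cite{Bath2}. Let $\mathfrak{q}$ be any associated prime of $\mathbb{C}[S]/B_{M}$, say $\mathfrak{q} = \ann_{\mathbb{C}[S]}(\bar{b})$ with $b \in \mathbb{C}[S]\setminus B_{M}$, so $\mathfrak{q}b \subseteq B_{M}$. Since $b$ is central in $\mathscr{D}_{X,\mathfrak{x}}[S]$, the set $bM$ is a nonzero $\mathscr{D}_{X,\mathfrak{x}}[S]$-submodule of $M$, and $\mathfrak{q}\cdot bM = (\mathfrak{q}b)M \subseteq B_{M}M = 0$, hence $\mathfrak{q} \subseteq \ann_{\mathbb{C}[S]}(bM) = B_{bM}$. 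Now $bM$ is finite, relative holonomic (relative holonomicity passes to submodules), and $(n+1)$-pure (any nonzero submodule of $bM$ is a nonzero submodule of $M$, hence of grade $n+1$), so Step 1 applies to it: $\sqrt{B_{bM}}$ is an intersection of height-one primes, and $\mathfrak{q} \subseteq B_{bM} \subseteq \sqrt{B_{bM}}$ is contained in a height-one prime, so $\mathfrak{q}$ has height at most one. On the other hand $\mathfrak{q}$ contains a minimal prime of $B_{M}$, which has height one by Step 1 applied to $M$ itself. Hence $\mathfrak{q}$ has height one and coincides with that minimal prime, so $B_{M}$ has no embedded primes.

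\textbf{Conclusion, and the main obstacle.} It remains to observe that a nonzero proper ideal $I \subseteq \mathbb{C}[S]$ that is unmixed of pure height one is principal: its minimal primes $(p_{1}),\dots,(p_{m})$ are distinct principal primes (as $\mathbb{C}[S]$ is a UFD), each $\mathbb{C}[S]_{(p_{i})}$ is a discrete valuation ring so the $(p_{i})$-primary component of $I$ is $(p_{i}^{a_{i}})$ for some $a_{i}\geq 1$, and the absence of embedded components gives $I = \bigcap_{i}(p_{i}^{a_{i}}) = (p_{1}^{a_{1}}\cdots p_{m}^{a_{m}})$. By Steps 1 and 2 the ideal $B_{M}$ is of this type, hence principal. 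The hard part is Step 2, excluding embedded primes. A subordinate technical nuisance, flagged in the text before the statement, is that $\gr^{\rel}(\mathscr{D}_{X,\mathfrak{x}}[S]) \cong \mathscr{O}_{X,\mathfrak{x}}[y_{1},\dots,y_{n},s_{1},\dots,s_{r}]$ is not equicodimensional (it has maximal ideals of differing heights), so the codimension-$(n+1)$ information coming from purity cannot be naively turned into a dimension count; one sidesteps this by exploiting the product shape $\Lambda_{\alpha}\times S_{\alpha}$ of the components of a relative holonomic module together with the fact that conical Lagrangians in $T^{\star}X$ have codimension exactly $n$.
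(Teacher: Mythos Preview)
Your proof is correct and uses the same core engine as the paper (Step 1 is essentially identical, and the key move of passing to the submodule $bM$ and invoking Step 1 again is exactly Maisonobe's idea), but your Step 2 is organized differently from the paper's. The paper argues constructively: given generators $b_1,\dots,b_t$ of $B_M$, it factors each $b_i=\beta_i\alpha_i$ with $\rad(\beta_i)=\rad(B_M)$ and $Z(\alpha_i)\cap Z(B_M)$ of codimension $\geq 2$, then uses purity of $\beta_i M$ to force $\beta_i M=0$; after that it sets $d=\gcd(b_1,\dots,b_t)$ and, writing $B_M=(d)\cdot I$ with $Z(I)$ of codimension $\geq 2$, repeats the trick with $dM$ to conclude $d\in B_M$. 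You instead show directly that $B_M$ has no embedded primes by taking $N=bM$ for $b$ witnessing an associated prime $\mathfrak q$, deducing $\mathfrak q\subseteq B_{bM}$, and applying Step 1 to $bM$ to bound $\mathrm{ht}(\mathfrak q)\leq 1$. Your route is arguably cleaner and more structural: it isolates the commutative-algebra endgame (unmixed height-one ideals in a UFD are principal) from the $\mathscr{D}$-module input, and it avoids the two-pass gcd manipulation. The paper's route has the minor advantage of exhibiting the generator explicitly as the gcd of any chosen generating set after the first reduction.

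One small remark on your Step 1: your phrase ``codimension is additive for products'' is exactly the point where the non-equicodimensionality of $\mathscr{O}_{X,\mathfrak{x}}[y,s]$ could bite, and the paper handles this by appealing to \eqref{eqn- local analytic dimension grade fml} together with Proposition \ref{prop- pure iff associated prime same heights}(2) to get equidimensionality of $\Ch^{\rel}$ of dimension $n+r-1$, then using $\dim\Lambda_\alpha=n$. Your final paragraph already flags this, so there is no gap, but when writing it up you should lean on \eqref{eqn- local analytic dimension grade fml} rather than on a bare product-codimension claim.
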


\begin{proof}
First we prove that $Z(B_{M})$ is purely codimension one. For this argument we only consider the relative order filtration on $\mathscr{D}_{X,\mathfrak{x}}[S]$. By Lemma \ref{lemma-criterion for purity}, there is a good filtration on $M$ such that the associated graded module is a pure $\gr^{\rel}(\mathscr{D}_{X,\xpoint}[S])$-module of grade $n+1$. Since the relative characteristic variety of any finitely generated $\mathscr{D}_{X,\xpoint}[S]$ module does not depend on the choice of good filtration, Proposition \ref{prop- pure iff associated prime same heights}.(2) and equation \eqref{eqn- local analytic dimension grade fml} imply $\text{Ch}^{\text{rel}}(\mathscr{D}_{X,\xpoint}[S] F^{S} / \mathscr{D}_{X,\xpoint}[S]F^{S+1})$ is equidimensional of dimension $n+r-1$. By assumption $M$ is relative holonomic, so in the product structure of Definition \ref{def- relative holonomic} all the irreducible varieties $S_{\alpha} \subseteq \mathbb{C}^{r}$ have dimension $r-1$. And by the properties of the projection $p_{2}: T^{\star}X \times \Spec \mathbb{C}[S] \to \Spec \mathbb{C}[S]$ outlined above (compare to Theorem \ref{thm-ZeroLociII results})
\[
Z(B_{M}) = p_{2}(\Ch^{\rel}(B_{M}) = \bigcup_{\alpha} S_{\alpha}.
\]
So $Z(B_{M})$ is purely codimension one. 

Now we show the principality of $\rad (B_{M})$ implies the principality of $B_{M}$. Pick a generating set $(b_{1}, \dots, b_{t})$ of $B_{F,\mathfrak{x}}^{\textbf{a}}$. As $\rad (B_{F,\mathfrak{x}})$ is principal, we may write each $b_{i} = \beta_{i} \alpha_{i}$ so that (1) $\rad (\mathbb{C}[S] \cdot \beta_{i}) = \rad (B_{F,\mathfrak{x}}^{a})$ and (2) $Z(\alpha_{i}) \cap Z(B_{F,\mathfrak{x}}^{\textbf{a}})$ has codimension at least $2$. (This is possible since $\mathbb{C}[S]$ is a UFD: factor $b_{i}$ into irreducibles and group all the irreducible factors (and their powers) into those whose reduced form divide $\rad (B_{M})$ and those who do not: the former constitute $\beta_{i}$; the latter $\alpha_{i}$.) Now consider
\[
N_{i} = \beta_{i} \frac{\mathscr{D}_{X,\mathfrak{x}}[S]F^{S}}{ \mathscr{D}_{X,\mathfrak{x}}[S]F^{S+\textbf{a}}} \subseteq \frac{\mathscr{D}_{X,\mathfrak{x}}[S]F^{S}}{ \mathscr{D}_{X,\mathfrak{x}}[S]F^{S+\textbf{a}}}.
\]
As $b_{i} \in B_{F,\mathfrak{x}}^{\textbf{a}}$, certainly $\alpha_{i}$ kills $N_{i}$. On the other hand, since $N_{i}$ is a submodule of $\mathscr{D}_{X,\mathfrak{x}}[S]F^{S} / \mathscr{D}_{X,\mathfrak{x}}[S]F^{S+\textbf{a}}$, we also have the containment $Z(\ann_{\mathbb{C}[S]}N_{i}) \subseteq Z(B_{F,\mathfrak{x}})$. Together, $Z(\ann_{\mathbb{C}[S]}N_{i}) \subseteq Z(B_{F,\mathfrak{x}}) \cap Z(\alpha_{i})$, which has codimension at least $2$. But $N_{i}$ inherits $(n+1)$-purity from $\mathscr{D}_{X,\mathfrak{x}}[S]F^{S} / \mathscr{D}_{X,\mathfrak{x}}[S]F^{S+\textbf{a}}$, which, from the prelude, implies that: if $N_{i}$ is nonzero, $Z(\ann_{\mathbb{C}[S]} N_{i})$ is purely codimension one. So we deduce $N_{i} = 0$. 

The upshot is that the generators $(b_{1}, \dots, b_{t})$ of $B_{F,\mathfrak{x}}$ satisfies $\rad (\mathbb{C}[S] \cdot b_{i}) = \rad (B_{F,\mathfrak{x}}^{\textbf{a}})$ for all $i$. Now write $d = \text{gcd}(b_{1}, \dots, b_{t})$ and pick the ideal $I \subseteq \mathbb{C}[S]$ so that $(\mathbb{C}[S] \cdot d) \cdot I = B_{F,\mathfrak{x}}^{\textbf{a}}$. That is, $I = (B_{F,\mathfrak{x}}^{\textbf{a}} :_{\mathbb{C}[S]} \mathbb{C}[S] \cdot d).$ We will be done if we show $d$ kills $\mathscr{D}_{X,\mathfrak{x}}[S]F^{S} / \mathscr{D}_{X,\mathfrak{x}}[S]F^{S+\textbf{a}}$, so consider 
\[
N = d \cdot \frac{\mathscr{D}_{X,\mathfrak{x}}[S]F^{S}}{ \mathscr{D}_{X,\mathfrak{x}}[S]F^{S+\textbf{a}}} \subseteq \frac{\mathscr{D}_{X,\mathfrak{x}}[S]F^{S}}{ \mathscr{D}_{X,\mathfrak{x}}[S]F^{S+\textbf{a}}}.
\]
Like before, $N$ inherits $(n+1)$-purity. By construction $I$ kills $N$, so $Z(\ann_{\mathbb{C}[S]} N) \subseteq Z(I)$. And by construction $Z(I)$ is at least codimension $2$ (a codimension one component implies the gcd is too small). Again by the prelude, the $(n+1)$-purity of $N$ implies that if $N$ is nonzero, then $Z(\ann_{\mathbb{C}[S]} N)$ is purely codimension one. So we conclude $N$ must be zero. Thus $\mathbb{C}[S] \cdot d = B_{F,\mathfrak{x}}^{\textbf{a}}$ and we are done.
\end{proof}

Under our main working hypotheses we quickly obtain:

\begin{corollary} \label{cor- tame implies BS principal}
Suppose that $f$ is strongly Euler-homogenous, Saito-holonomic, and tame, $F$ corresponds to any factorization $f = f_{1} \cdots f_{r}$, and $f^{\textbf{a}}$ is not a unit. Then $B_{F,\xpoint}^{\textbf{a}}$ is principal.
\end{corollary}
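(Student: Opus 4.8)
The plan is to deduce Corollary~\ref{cor- tame implies BS principal} immediately from the machinery already assembled in the excerpt, by verifying that the module $\mathscr{D}_{X,\xpoint}[S]F^{S} / \mathscr{D}_{X,\xpoint}[S]F^{S+\textbf{a}}$ satisfies the two hypotheses of Theorem~\ref{thm- purity implies principality}: relative holonomicity and $(n+1)$-purity. Granting that, the conclusion that $B_{F,\xpoint}^{\textbf{a}}$ is principal is exactly the statement of Theorem~\ref{thm- purity implies principality} applied to $M = \mathscr{D}_{X,\xpoint}[S]F^{S} / \mathscr{D}_{X,\xpoint}[S]F^{S+\textbf{a}}$, since by Definition~\ref{def-BS ideal general} (and the note therein) $B_{M} = B_{F,\xpoint}^{\textbf{a}}$.

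First I would observe that relative holonomicity is free: since $f^{\textbf{a}}$ is not a unit, Theorem~\ref{thm-ZeroLociII results}.(i) tells us precisely that $\mathscr{D}_{X,\xpoint}[S]F^{S} / \mathscr{D}_{X,\xpoint}[S]F^{S+\textbf{a}}$ is relative holonomic. Next, to get $(n+1)$-purity, I would invoke Theorem~\ref{thm- F^S / F^S+1 CM}: under the hypotheses that $f$ is strongly Euler-homogeneous, Saito-holonomic, and tame, with $f^{\textbf{a}}$ not a unit, that theorem says the module is $(n+1)$-Cohen--Macaulay. Then Proposition~\ref{prop-CM implies pure} upgrades ``$(n+1)$-Cohen--Macaulay'' to ``pure of grade $n+1$''. (One should note that $\mathscr{D}_{X,\xpoint}[S]$ is Auslander regular under the total or relative order filtration, as recorded in the excerpt, so Proposition~\ref{prop-CM implies pure} does apply.) With both hypotheses of Theorem~\ref{thm- purity implies principality} in hand, the corollary follows at once.

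There is essentially no obstacle remaining at this stage, since all the substantive work---the Cohen--Macaulay property coming from the logarithmic-data hypotheses via Theorem~\ref{thm- gen by derivations} and Proposition~\ref{prop- F^S CM}, and the principality criterion built on Maisonobe's idea---has already been carried out earlier in the paper. The only thing to be careful about is bookkeeping: confirming that the hypotheses of Theorem~\ref{thm- F^S / F^S+1 CM} match verbatim the hypotheses of the corollary (they do: strongly Euler-homogeneous, Saito-holonomic, tame, any factorization $f = f_{1}\cdots f_{r}$, and $f^{\textbf{a}}$ not a unit), and that the non-unit hypothesis on $f^{\textbf{a}}$ is what legitimizes invoking Theorem~\ref{thm-ZeroLociII results}.(i). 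The proof is therefore a two-line citation chain: Theorem~\ref{thm- F^S / F^S+1 CM} plus Proposition~\ref{prop-CM implies pure} gives $(n+1)$-purity, Theorem~\ref{thm-ZeroLociII results}.(i) gives relative holonomicity, and Theorem~\ref{thm- purity implies principality} then yields principality of $B_{F,\xpoint}^{\textbf{a}}$.
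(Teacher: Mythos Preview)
Your proposal is correct and follows essentially the same route as the paper's proof: invoke Theorem~\ref{thm- F^S / F^S+1 CM} to get $(n+1)$-Cohen--Macaulay (hence $(n+1)$-pure), then apply Theorem~\ref{thm- purity implies principality}. You are slightly more explicit than the paper in separately citing Theorem~\ref{thm-ZeroLociII results}.(i) for relative holonomicity, which the paper leaves implicit.
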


\begin{proof}
Once we show $\mathscr{D}_{X,\mathfrak{x}}[S]F^{S} / \mathscr{D}_{X,\mathfrak{x}}[S]F^{S+\textbf{a}}$ is $(n+1)$-pure this follows from Theorem \ref{thm- purity implies principality}. But we know something stronger: Theorem \ref{thm- F^S / F^S+1 CM} says it is $(n+1)$-Cohen--Macaulay.
\end{proof}

\begin{remark} \label{rmk- extending pure implies principal}
\begin{enumerate}[(a)]
    \item Theorem \ref{thm- purity implies principality} only requires that $\mathbb{C}[S]$ is a UFD, so a similar, and more general, result holds for Bernstein--Sato ideals over $\mathscr{D}_{X} \otimes_{\mathbb{C}} R$ instead of $\mathscr{D}_{X}[S]$ provided $R$ is assumed to be a UFD in addition to the standard assumptions of: $R$ is a commutative, finitely generated $\mathbb{C}$-algebra that is an integral domain, cf. \cite{ZeroLociI}. That is, under these assumptions, the $R$-annihilator of a finite $(n+1)$-pure module over $\mathscr{D}_{X} \otimes_{\mathbb{C}} R$ will be principal. Working with $\mathscr{D}_{X} \otimes_{\mathbb{C}} R$, where $R$ is a localization of $\mathbb{C}[S]$ at an ideal cutting out a closed variety of $\Spec \mathbb{C}[S]$, was used very effectively in loc. cit.; the above results thus apply to that setting.
    \item One can also obtain similar results in the algebraic $D_{X}$-module setting as there Proposition \ref{prop- pure iff associated prime same heights}.(1) applies directly.
\end{enumerate}
\end{remark}

The module $\mathscr{D}_{X,\mathfrak{x}}[S]F^{S} / \mathscr{D}_{X,\mathfrak{x}}[S]F^{S+\textbf{a}}$ may not be $(n+1)$-pure and its corresponding Bernstein--Sato ideal may not be principal, as the following example from Bahloul and Oaku demonstrates (see also \cite{BrianconMaynadierPrincipality}).

\begin{example} \label{ex-not codim one}
Consider the non-Saito-holonomic example (Example \ref{rmk-Saito-holonomic}) of $f = z(x^{5} + y^{5} + zx^{2}y^{3})$ and $F=(z, (x^{5} + y^{5} + zx^{2}y^{3}))$ from Section 4.2 of of \cite{BahloulOakuLocalBSIdeals}. By loc. cit. $B_{F,0}$ has the zero dimensional components $\{-i, -\frac{7}{5} \}_{i=2}^{5} \cup \{-2, -\frac{8}{5} \}$ and $B_{F,0}^{\textbf{1}}$ is non-principal with three generators.
\end{example}

Now we turn to the second result of the introduction, that is, to comparing $Z(B_{F,\xpoint}^{\textbf{1}})$ and $Z(B_{H,\xpoint}^{\textbf{1}})$ where $F$ and $H$ correspond to different factorizations of $f$. We write elements of $\mathbb{C}^{r}$ as $A=(A_{1}, \dots, A_{r}) \in \mathbb{C}^{r}$ and reserve $\textbf{a}$ for elements of $\mathbb{N}^{r}$. We require the following linchpin result from \cite{ZeroLociI}:

\begin{proposition}
\label{prop-ZeroLoci nabla result} \text{\normalfont (Proposition 3.4.3 \cite{ZeroLociI})}
Suppose that $\mathscr{D}_{X,\xpoint}[S]F^{S} / \mathscr{D}_{X,\xpoint}[S]F^{S+\textbf{a}}$ is $(n+1)$-Cohen--Macaulay. Then 
\[
A \in Z(B_{F,\xpoint}^{\textbf{a}}) \text{ if and only if } \frac{\mathscr{D}_{X,\xpoint}[S]F^{S}}{\mathscr{D}_{X,\xpoint}[S]F^{S+\textbf{a}}} \otimes_{\mathbb{C}[S]} \frac{\mathbb{C}[S]}{\mathbb{C}[S] \cdot (s_{1}-A_{1}, \dots, s_{r} - A_{r})} \neq 0.
\]
\end{proposition}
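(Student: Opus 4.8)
Throughout write $M := \mathscr{D}_{X,\mathfrak{x}}[S]F^{S}/\mathscr{D}_{X,\mathfrak{x}}[S]F^{S+\textbf{a}}$, $R := \mathbb{C}[S]$, and $\mathfrak{m}_{A} := (s_{1}-A_{1},\dots,s_{r}-A_{r}) \in \mSpec R$, so that $B_{F,\mathfrak{x}}^{\textbf{a}} = \ann_{R}M$ and the displayed tensor product is $M/\mathfrak{m}_{A}M$; recall from Theorem \ref{thm-ZeroLociII results} that $M$ is relative holonomic of grade $n+1$ with $Z(B_{F,\mathfrak{x}}^{\textbf{a}}) = p_{2}(\Ch^{\rel}M)$. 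The plan is to treat the two implications separately, the Cohen--Macaulay hypothesis entering only in the forward one. For the ``if'' direction, suppose $M/\mathfrak{m}_{A}M \neq 0$; every $b \in B_{F,\mathfrak{x}}^{\textbf{a}}$ annihilates $M$, hence $M/\mathfrak{m}_{A}M$, but on the $R/\mathfrak{m}_{A}=\mathbb{C}$-module $M/\mathfrak{m}_{A}M$ it acts as multiplication by the scalar $b(A)$, so $b(A)=0$; thus $B_{F,\mathfrak{x}}^{\textbf{a}} \subseteq \mathfrak{m}_{A}$, i.e. $A \in \Variety(\rad B_{F,\mathfrak{x}}^{\textbf{a}}) = Z(B_{F,\mathfrak{x}}^{\textbf{a}})$, and this direction uses no hypothesis on $M$.

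For the ``only if'' direction I would first unpack the hypothesis geometrically. By Proposition \ref{prop-CM implies pure}, $M$ is $(n+1)$-pure, so Proposition \ref{prop- pure iff associated prime same heights} together with \eqref{eqn- local analytic dimension grade fml} makes $\Ch^{\rel}(M)$ equidimensional of dimension $n+r-1$; by relative holonomicity its components have the form $\Lambda_{\alpha}\times S_{\alpha}$ of Definition \ref{def- relative holonomic} with $\dim\Lambda_{\alpha}=n$, so each $S_{\alpha}$ is a hypersurface in $\mathbb{C}^{r}$, and $Z(B_{F,\mathfrak{x}}^{\textbf{a}}) = \bigcup_{\alpha}S_{\alpha}$ by Theorem \ref{thm-ZeroLociII results}. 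Hence $A \in Z(B_{F,\mathfrak{x}}^{\textbf{a}})$ means $A \in S_{\alpha_{0}}$ for some $\alpha_{0}$. Choosing a good relative-order filtration on $M$ makes $\gr M$ a finitely generated module over the regular ring $G := \gr^{\rel}(\mathscr{D}_{X,\mathfrak{x}}[S]) \cong \mathscr{O}_{X,\mathfrak{x}}[y_{1},\dots,y_{n}][S]$ with $\Support_{G}\gr M = \Ch^{\rel}(M)$; writing $\mathfrak{m}_{A}^{G}$ for the ideal of $G$ generated by $s_{1}-A_{1},\dots,s_{r}-A_{r}$, finite generation of $\gr M$ yields $\Support_{G}(\gr M/\mathfrak{m}_{A}^{G}\gr M) = \Ch^{\rel}(M)\cap\Variety(\mathfrak{m}_{A}^{G}) \supseteq \Lambda_{\alpha_{0}}\times\{A\} \neq \emptyset$, so that $\gr M/\mathfrak{m}_{A}^{G}\gr M \neq 0$.

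It then remains to descend this to $M/\mathfrak{m}_{A}M \neq 0$, equivalently (since the filtration is Zariskian) to $\gr(M/\mathfrak{m}_{A}M)\neq 0$; there is a canonical surjection $\gr M/\mathfrak{m}_{A}^{G}\gr M \twoheadrightarrow \gr(M/\mathfrak{m}_{A}M)$, but a priori it has a kernel, and controlling it is exactly where $(n+1)$-Cohen--Macaulayness — not merely purity — must be invoked. The plan is to show that a good filtration can be chosen ``strict'' for $\mathfrak{m}_{A}$ — equivalently that $s_{1}-A_{1},\dots,s_{r}-A_{r}$ behave like part of a regular sequence on $\gr M$ up to the one-dimensional excess coming from $A$ lying in codimension $r-1$, not $r$, inside the hypersurface $S_{\alpha_{0}}$ — so that the induced filtration on $M/\mathfrak{m}_{A}M$ is the quotient filtration and $\gr(M/\mathfrak{m}_{A}M) = \gr M/\mathfrak{m}_{A}^{G}\gr M \neq 0$; equivalently one runs the Koszul complex $K_{\bullet}(s_{1}-A_{1},\dots,s_{r}-A_{r};M)$, a complex of finitely generated $\mathscr{D}_{X,\mathfrak{x}}[S]$-modules, and uses Cohen--Macaulayness to concentrate its homology (modulo the excess) in degree zero, so $H_{0}=M/\mathfrak{m}_{A}M\neq 0$. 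I expect this strictness/Koszul-concentration statement to be the main obstacle: the two cleanest routes to it seem to be a commutative-algebra lemma on Cohen--Macaulay modules over the regular ring $G$ (arranging $s_{1}-A_{1},\dots,s_{r}-A_{r}$ into a maximal $G/\ann_{G}\gr M$-regular sequence up to the excess, which is the heart of Maisonobe's method in \cite{Maisonobe}), or bidualization — using $M\cong\Ext^{n+1}_{\mathscr{D}_{X,\mathfrak{x}}[S]}\!\big(\Ext^{n+1}_{\mathscr{D}_{X,\mathfrak{x}}[S]}(M,\mathscr{D}_{X,\mathfrak{x}}[S]),\mathscr{D}_{X,\mathfrak{x}}[S]\big)$ to trade nonvanishing of the zeroth cohomology of the derived restriction of $M$ for nonvanishing of the top cohomology of the derived restriction of the dual $E := \Ext^{n+1}_{\mathscr{D}_{X,\mathfrak{x}}[S]}(M,\mathscr{D}_{X,\mathfrak{x}}[S])$, which is itself $(n+1)$-Cohen--Macaulay with $\ann_{R}E = \ann_{R}M$. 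This is the circle of ideas behind Proposition 3.4.3 (and Proposition 4.3.4) of \cite{ZeroLociI}.
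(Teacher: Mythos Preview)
The paper does not give its own proof of this proposition: it is quoted verbatim as Proposition 3.4.3 of \cite{ZeroLociI} and used as a black box (``We require the following linchpin result from \cite{ZeroLociI}''). So there is nothing to compare against directly.

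That said, your write-up is a plan rather than a proof. The ``if'' direction is complete and correct. For the ``only if'' direction you correctly isolate the issue --- passing from $\gr M/\mathfrak{m}_{A}^{G}\gr M\neq 0$ to $M/\mathfrak{m}_{A}M\neq 0$ --- and you name two plausible strategies (Koszul regularity on $\gr M$, or bidualization via the double $\Ext^{n+1}$), but you do not carry either out. In particular, the sentence ``a good filtration can be chosen `strict' for $\mathfrak{m}_{A}$'' is exactly the claim to be proved, and your parenthetical about $s_{1}-A_{1},\dots,s_{r}-A_{r}$ forming a regular sequence ``up to the one-dimensional excess'' is suggestive but not an argument: you would need to explain precisely which good filtration you take (e.g.\ the one from Lemma \ref{lemma-criterion for purity}(iii) making $\gr M$ pure, or better), why Cohen--Macaulayness of $M$ forces the relevant Koszul homology of $\gr M$ to vanish in the right range, and how that vanishing translates back through Lemma \ref{lemma-good filtration on ext}-type arguments to the filtered level. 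The bidualization route is closer to how \cite{ZeroLociI} actually proceeds, but again you only gesture at it. As written, the forward implication has a genuine gap: the core lemma is stated but not established.
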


This proposition will let us equate membership in the zero locus of the Bernstein--Sato ideal with behavior of a certain $\mathscr{D}_{X}$-linear map.

\begin{define} (cf. Definition 3.1 \cite{Bath1})
Let $\nabla^{\textbf{a}}: \mathscr{D}_{X, \xpoint}[S] F^{S} \to \mathscr{D}_{X, \xpoint}[S] F^{S}$ be the $\mathscr{D}_{X, \xpoint}$-linear map induced by sending each $s_{k}$ to $s_{k} + a_{k}$ (recall $\textbf{a} = (a_{1}, \dots, a_{r}))$. So $P(S) F^{S} \mapsto P(S+\textbf{a}) F^{s+\textbf{a}}$ where $P(s) \in \mathscr{D}_{X, \xpoint}[S]$. Let $A = (A_{1}, \dots, A_{r}) \in \mathbb{C}^{r}$ and $(S-A) \mathscr{D}_{X, \xpoint}[S] F^{S}$ correspond to the submodule in $\mathscr{D}_{X, \xpoint}[S]F^{S}$ generated by $s_{1} - A_{1}, \dots, s_{r} - A_{r}$. Since $\nabla^{\textbf{a}}$ sends $s_{k} - A_{k}$ to $s_{k} - (A_{k} - a_{k})$, it induces a $\mathscr{D}_{X, \xpoint}$-linear map
\[
\nabla_{A}^{\textbf{a}}: \frac{\mathscr{D}_{X, \xpoint}[S] F^{S}}{(S-A) \mathscr{D}_{X, \xpoint}[S]F^{S}} \to \frac{\mathscr{D}_{X, \xpoint}[S] F^{S}}{(S-(A-\textbf{a})) \mathscr{D}_{X, \xpoint}[S]F^{S}}.
\]
\end{define}

When $\textbf{a} = \textbf{1}$, this map was considered extensively in Sections 3, 4 of \cite{Bath1}. There it was denoted simply by $\nabla_{A}$. The next corollary answers positively a question raised in Section 3 of loc. cit.:

\begin{corollary} \label{cor-three equivalent things nabla}
Suppose that $f$ is strongly Euler-homogeneous, Saito-holonomic, and tame and $F$ corresponds to any factorization $f = f_{1} \cdots f_{r}$. Then the following are equivalent:
\begin{enumerate}[(i)]
\item $A-1 \notin Z(B_{F,\xpoint})$;
\item $\nabla_{A}^{\textbf{1}}$ is injective at $\xpoint$;
\item $\nabla_{A}^{\textbf{1}}$ is surjective at $\xpoint$.
\end{enumerate}
\end{corollary}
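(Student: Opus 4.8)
The plan is to reduce all three conditions to a single specialization of the module $Q:=\mathscr{D}_{X,\xpoint}[S]F^{S}/\mathscr{D}_{X,\xpoint}[S]F^{S+\textbf{1}}$. By Theorem \ref{thm- F^S / F^S+1 CM} this $Q$ is $(n+1)$-Cohen--Macaulay, so Proposition \ref{prop-ZeroLoci nabla result} applies to it. Writing $\mathfrak{m}_{A-\textbf{1}}=(s_{1}-(A_{1}-1),\dots,s_{r}-(A_{r}-1))\subseteq\mathbb{C}[S]$, I would express both $\ker(\nabla_{A}^{\textbf{1}})$ and $\mathrm{coker}(\nabla_{A}^{\textbf{1}})$ in terms of $Q\otimes_{\mathbb{C}[S]}\mathbb{C}[S]/\mathfrak{m}_{A-\textbf{1}}$ and $\mathrm{Tor}^{\mathbb{C}[S]}_{\bullet}(Q,\mathbb{C}[S]/\mathfrak{m}_{A-\textbf{1}})$, using the long exact $\mathrm{Tor}^{\mathbb{C}[S]}_{\bullet}(-,\mathbb{C}[S]/\mathfrak{m}_{A-\textbf{1}})$-sequence of $0\to\mathscr{D}_{X,\xpoint}[S]F^{S+\textbf{1}}\to\mathscr{D}_{X,\xpoint}[S]F^{S}\to Q\to 0$.

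The first step is a structural observation: $\nabla^{\textbf{1}}\colon\mathscr{D}_{X,\xpoint}[S]F^{S}\to\mathscr{D}_{X,\xpoint}[S]F^{S}$ is $\mathscr{D}_{X,\xpoint}$-linear, injective, with image $\mathscr{D}_{X,\xpoint}[S]F^{S+\textbf{1}}$, and it sends $s_{k}-A_{k}$ to $s_{k}-(A_{k}-1)$. Hence it carries $(S-A)\mathscr{D}_{X,\xpoint}[S]F^{S}$ onto $(S-(A-\textbf{1}))\mathscr{D}_{X,\xpoint}[S]F^{S+\textbf{1}}$, and therefore induces a $\mathscr{D}_{X,\xpoint}$-isomorphism from the source of $\nabla_{A}^{\textbf{1}}$ onto $\mathscr{D}_{X,\xpoint}[S]F^{S+\textbf{1}}\otimes_{\mathbb{C}[S]}\mathbb{C}[S]/\mathfrak{m}_{A-\textbf{1}}$, under which $\nabla_{A}^{\textbf{1}}$ becomes the map to $\mathscr{D}_{X,\xpoint}[S]F^{S}\otimes_{\mathbb{C}[S]}\mathbb{C}[S]/\mathfrak{m}_{A-\textbf{1}}$ induced by the inclusion. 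Right exactness of $-\otimes_{\mathbb{C}[S]}\mathbb{C}[S]/\mathfrak{m}_{A-\textbf{1}}$ then yields $\mathrm{coker}(\nabla_{A}^{\textbf{1}})\simeq Q\otimes_{\mathbb{C}[S]}\mathbb{C}[S]/\mathfrak{m}_{A-\textbf{1}}$, and the preceding term of the $\mathrm{Tor}$-sequence shows $\ker(\nabla_{A}^{\textbf{1}})$ is a quotient of $\mathrm{Tor}_{1}^{\mathbb{C}[S]}(Q,\mathbb{C}[S]/\mathfrak{m}_{A-\textbf{1}})$. Since Proposition \ref{prop-ZeroLoci nabla result} says $Q\otimes_{\mathbb{C}[S]}\mathbb{C}[S]/\mathfrak{m}_{A-\textbf{1}}\neq 0$ precisely when $A-\textbf{1}\in Z(B_{F,\xpoint})$, the cokernel computation already gives (i) $\Leftrightarrow$ (iii). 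Moreover (i) $\Rightarrow$ (ii) is then immediate: if $A-\textbf{1}\notin Z(B_{F,\xpoint})=\Variety(B_{F,\xpoint})$ then $B_{F,\xpoint}=\ann_{\mathbb{C}[S]}Q\not\subseteq\mathfrak{m}_{A-\textbf{1}}$, so $Q_{\mathfrak{m}_{A-\textbf{1}}}=0$; as $\mathrm{Tor}_{i}^{\mathbb{C}[S]}(Q,\mathbb{C}[S]/\mathfrak{m}_{A-\textbf{1}})$ is annihilated by $\mathfrak{m}_{A-\textbf{1}}$ it equals its localization $\mathrm{Tor}_{i}^{\mathbb{C}[S]_{\mathfrak{m}_{A-\textbf{1}}}}(Q_{\mathfrak{m}_{A-\textbf{1}}},\kappa)=0$, and hence $\ker(\nabla_{A}^{\textbf{1}})=0$.

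The delicate implication — the main obstacle — is (ii) $\Rightarrow$ (i), i.e. ``$\nabla_{A}^{\textbf{1}}$ injective $\Rightarrow$ $\nabla_{A}^{\textbf{1}}$ surjective.'' For this I would exploit that both $\mathscr{D}_{X,\xpoint}[S]F^{S}$ and $\mathscr{D}_{X,\xpoint}[S]F^{S+\textbf{1}}$ are $n$-Cohen--Macaulay (Proposition \ref{prop- F^S CM} and the proof of Theorem \ref{thm- F^S / F^S+1 CM}) with relative characteristic varieties of product form $\Lambda\times\mathbb{C}^{r}$, from which $s_{1}-(A_{1}-1),\dots,s_{r}-(A_{r}-1)$ is a regular sequence on each of these modules. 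This is the technical heart and the one place where Cohen--Macaulayness is genuinely used; it is in the spirit of Proposition 4.3.4 of \cite{ZeroLociI} and can alternatively be read off from the associated graded analysis in \cite{Bath1}. Granting it, $\mathrm{Tor}_{i}^{\mathbb{C}[S]}(\mathscr{D}_{X,\xpoint}[S]F^{S},\mathbb{C}[S]/\mathfrak{m}_{A-\textbf{1}})=0=\mathrm{Tor}_{i}^{\mathbb{C}[S]}(\mathscr{D}_{X,\xpoint}[S]F^{S+\textbf{1}},\mathbb{C}[S]/\mathfrak{m}_{A-\textbf{1}})$ for all $i\geq 1$, so the $\mathrm{Tor}$-sequence collapses to $\mathrm{Tor}_{i}^{\mathbb{C}[S]}(Q,\mathbb{C}[S]/\mathfrak{m}_{A-\textbf{1}})=0$ for $i\geq 2$ and $\mathrm{Tor}_{1}^{\mathbb{C}[S]}(Q,\mathbb{C}[S]/\mathfrak{m}_{A-\textbf{1}})\simeq\ker(\nabla_{A}^{\textbf{1}})$. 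If $\nabla_{A}^{\textbf{1}}$ is injective, then $\mathrm{Tor}_{i}^{\mathbb{C}[S]}(Q,\mathbb{C}[S]/\mathfrak{m}_{A-\textbf{1}})=0$ for every $i\geq 1$; but $\Support_{\mathbb{C}[S]}Q=Z(B_{F,\xpoint})$ has codimension one (by the proof of Theorem \ref{thm- purity implies principality}), so $\dim Q_{\mathfrak{m}_{A-\textbf{1}}}\leq r-1<r$ over $\mathbb{C}[S]_{\mathfrak{m}_{A-\textbf{1}}}$, and a Koszul-complex depth count forces $Q_{\mathfrak{m}_{A-\textbf{1}}}=0$, hence $A-\textbf{1}\notin Z(B_{F,\xpoint})$. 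Once the regular-sequence claim is secured, everything else is bookkeeping with the $\mathrm{Tor}$-sequence and Proposition \ref{prop-ZeroLoci nabla result}.
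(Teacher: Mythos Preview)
Your identification of $\mathrm{coker}(\nabla_A^{\textbf{1}})$ with $Q\otimes_{\mathbb{C}[S]}\mathbb{C}[S]/\mathfrak{m}_{A-\textbf{1}}$ and the resulting equivalence (i) $\Leftrightarrow$ (iii) via Proposition~\ref{prop-ZeroLoci nabla result} are correct; this is exactly the paper's argument for (iii) $\Rightarrow$ (i), and it already subsumes the direction (i) $\Rightarrow$ (iii) that the paper instead pulls from Proposition~3.2 of \cite{Bath1}. Your (i) $\Rightarrow$ (ii) via vanishing of $\mathrm{Tor}_1$ after localization is also fine and is essentially what Proposition~3.2 of \cite{Bath1} does. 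Where your argument diverges from the paper is the implication (ii) $\Rightarrow$ (i): the paper does not attempt this directly but instead invokes Theorem~3.11 of \cite{Bath1} for (ii) $\Rightarrow$ (iii), whose proof exploits the explicit description of the source and target of $\nabla_A^{\textbf{1}}$ available under the generation-by-derivations hypothesis.

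Your proposed route for (ii) $\Rightarrow$ (i) has a gap at the final step. Granting your regular-sequence claim for $\mathscr{D}_{X,\xpoint}[S]F^{S}$ and $\mathscr{D}_{X,\xpoint}[S]F^{S+\textbf{1}}$ (which is indeed extractable from \cite{Bath1}), you correctly obtain $\mathrm{Tor}_i^{\mathbb{C}[S]}(Q,\mathbb{C}[S]/\mathfrak{m}_{A-\textbf{1}})=0$ for all $i\ge 1$ whenever $\nabla_A^{\textbf{1}}$ is injective. But the inference ``$\dim Q_{\mathfrak{m}}\le r-1$ and all positive Koszul homology vanishes, hence $Q_{\mathfrak{m}}=0$'' is a depth--dimension argument that requires $Q$ to be finitely generated over $\mathbb{C}[S]$, and it is not: $Q$ is only finitely generated over $\mathscr{D}_{X,\xpoint}[S]$. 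For non-finitely-generated modules the vanishing of positive Koszul homology neither forces flatness nor bounds depth by dimension in the needed sense (for instance $M=\mathbb{C}[y,y^{-1}]$ over $\mathbb{C}[x,y]$ with $x$ acting as zero has $\ann M=(x)$, $M_{(x,y)}\ne 0$, and all Koszul homology $H_i(x,y;M)=0$). To salvage the step one would have to feed in more structure---e.g.\ the finite filtration of a relative holonomic module with subquotients of the form $L\otimes_{\mathbb{C}}N$ for $L$ a holonomic $\mathscr{D}_{X,\xpoint}$-module and $N$ a finitely generated $\mathbb{C}[S]$-module---or simply appeal to Theorem~3.11 of \cite{Bath1} as the paper does; either way it is not the bare ``Koszul-complex depth count'' you wrote.
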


\begin{proof}
That (i) implies (ii) and (iii) was shown in Proposition 3.2 of \cite{Bath1}; that (ii) implies (iii) is the content of Theorem 3.11 of \cite{Bath1}. We must show (iii) implies (i). But this is immediate from Theorem \ref{thm- F^S / F^S+1 CM}, Proposition \ref{prop-ZeroLoci nabla result}, and the fact that the cokernel of $\nabla_{A}$ at $\xpoint$ is exactly 
\[
\mathscr{D}_{X,\xpoint}[S]F^{S} / \mathscr{D}_{X,\xpoint}[S]F^{S+1} \otimes_{\mathbb{C}[S]} \mathbb{C}[S] / \mathbb{C}[S] \cdot (s_{1}-(a_{1}-1), \dots, s_{r}-(a_{r}-1)).
\]
\end{proof}

Now we can begin to compare different factorizations of $f$.

\begin{define} \label{def-coarser}
Suppose $F$ corresponds to the factorization $f = f_{1} \cdots f_{r}$. For a decomposition of $[r]$ as a disjoint union $\sqcup_{1 \leq t \leq m} I_{t}$, write $h_{t} = \prod_{j \in I_{t}} f_{j}$.  If $H$ corresponds to the factorization $f = h_{1} \cdots h_{m}$ of $f$, then we say $H$ is a \emph{coarser} factorization of $F$. Now define $S_{H}$ as the ideal in $\mathbb{C}[S] = \mathbb{C}[s_{1}, \dots, s_{r}]$ generated by $s_{u} - s_{v}$ for $u, v \in I_{t}$ and all choices of $1 \leq t \leq m$. And, finally, let $\Delta_{H}: \mathbb{C}^{m} \to \mathbb{C}^{r}$ be the embedding determined by $\mathbb{C}[S] \to \mathbb{C}[S] / S_{H}:$
\[
\Delta_{H}(a_{1}, \dots, a_{m}) = (b_{1}, \dots, b_{r}) \text{ where } b_{u} = a_{t} \text{ for all } u \in I_{t} \text{ and for all } t.
\]
If $H$ corresponds to the trivial factorization $f = f$, then $\Delta_{H}$ is the diagonal embedding $a \mapsto (a, \dots, a).$
\end{define}

The following results from the fact both $\ann_{\mathscr{D}_{X,\xpoint}[S]} F^{S}$ and $\ann_{\mathscr{D}_{X,\xpoint}[S]}H^{S}$ are generated by derivations, see Theorem 2.29 of \cite{Bath1}:

\begin{proposition} \text{\normalfont (cf. Proposition 2.33, Remark 3.3 \cite{Bath1})} \label{prop-nabla commutative diagram} Suppose $f$ is strongly Euler-homogeneous, Saito-holonomic, and tame and $F$ corresponds to any factorization $f = f_{1} \cdots f_{r}$. If $H$ corresponds to a coarser factorization $f = h_{1} \cdots h_{m}$ and $A \in \mathbb{C}^{m}$ then we have a commutative diagram of $\mathscr{D}_{X,\xpoint}$-modules and maps:
    \begin{equation} \label{diagram- nabla comm diagram}
    \begin{tikzcd}
        \frac{\mathscr{D}_{X,\xpoint}[S]F^{S}}{(S-\Delta_{H}(A))\mathscr{D}_{X,\xpoint}[S]F^{S}} \arrow{r}{\simeq} \arrow{d}{\nabla_{\Delta_{H}(A)}^{\Delta_{H}(\textbf{a})}} 
            & \frac{\mathscr{D}_{X,\xpoint}[S] H^{s}}{(S-A)\mathscr{D}_{X,\xpoint}[S]H^{s}} \arrow{d}{\nabla_{A}^{\textbf{a}}} \\
        \frac{\mathscr{D}_{X,\xpoint}[S]F^{S}}{(S-(\Delta_{H}(A-1)))\mathscr{D}_{X,\xpoint}[S]F^{S}} \arrow{r}{\simeq}
            & \frac{\mathscr{D}_{X,\xpoint}[S] H^{S}}{(S-(A-1))\mathscr{D}_{X,\xpoint}[S]H^{S}}.
    \end{tikzcd}
    \end{equation}
\end{proposition}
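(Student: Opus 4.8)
The plan is to reduce everything to the cyclic presentations of $\mathscr{D}_{X,\xpoint}[S]F^{S}$ and $\mathscr{D}_{X,\xpoint}[S]H^{S}$ furnished by Theorem \ref{thm- gen by derivations}, and then to check that the ``coarsening'' substitution on the $s$-variables intertwines all of the relevant structure. Write $t(u)$ for the index of the block $I_{t(u)}\ni u$ in the decomposition $[r]=\sqcup_{t}I_{t}$ of Definition \ref{def-coarser}, and let
\[
\sigma\colon \mathscr{D}_{X,\xpoint}[s_{1},\dots,s_{r}]\longrightarrow \mathscr{D}_{X,\xpoint}[s_{1},\dots,s_{m}]
\]
be the $\mathbb{C}$-algebra homomorphism that is the identity on $\mathscr{D}_{X,\xpoint}$ and sends $s_{u}\mapsto s_{t(u)}$; its kernel is the (two-sided) ideal generated by $S_{H}$. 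First I would record three compatibilities of $\sigma$, each a formal consequence of the product rule and carried out, if one wishes, inside the localization $\mathscr{D}_{X,\xpoint}[f^{-1}][S]$ where $\delta\bullet f_{k}/f_{k}$ becomes an honest element: (1) since $\delta\bullet h_{t}/h_{t}=\sum_{k\in I_{t}}\delta\bullet f_{k}/f_{k}$, one has $\sigma(\psi_{F}(\delta))=\psi_{H}(\delta)$ for every $\delta\in\Der_{X}(-\log f)=\Der_{X}(-\log h)$; (2) $\sigma\circ\tau_{\Delta_{H}(\textbf{a})}=\tau_{\textbf{a}}\circ\sigma$, where $\tau_{\textbf{v}}$ denotes the automorphism shifting the $s$-variables by $\textbf{v}$, and $\sigma(f^{\Delta_{H}(\textbf{a})})=h^{\textbf{a}}$ (so that $F^{S+\Delta_{H}(\textbf{a})}\leftrightarrow H^{S+\textbf{a}}$ under $F^{S}\leftrightarrow H^{S}$); (3) $(S-\Delta_{H}(A))=S_{H}+I'$ where $I'$ is generated by lifts of $s_{1}-A_{1},\dots,s_{m}-A_{m}$, so that $\sigma$ identifies $\mathbb{C}[S]/(S-\Delta_{H}(A))$ with $\mathbb{C}[s_{1},\dots,s_{m}]/(s_{1}-A_{1},\dots,s_{m}-A_{m})$, and likewise with $A$ replaced by $A-\textbf{a}$ (recall $\Delta_{H}(A-\textbf{a})=\Delta_{H}(A)-\Delta_{H}(\textbf{a})$).

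Next I would build the two horizontal isomorphisms. Theorem \ref{thm- gen by derivations}, applied both to the factorization $F$ and to the coarser factorization $H$ (which still factors the strongly Euler-homogeneous, Saito-holonomic, tame $\text{Div}(f)$), gives cyclic presentations
\[
\mathscr{D}_{X,\xpoint}[S]F^{S}\simeq \mathscr{D}_{X,\xpoint}[s_{1},\dots,s_{r}]\big/\mathscr{D}_{X,\xpoint}[s_{1},\dots,s_{r}]\cdot\psi_{F}(\Der_{X}(-\log f)),
\]
\[
\mathscr{D}_{X,\xpoint}[S]H^{S}\simeq \mathscr{D}_{X,\xpoint}[s_{1},\dots,s_{m}]\big/\mathscr{D}_{X,\xpoint}[s_{1},\dots,s_{m}]\cdot\psi_{H}(\Der_{X}(-\log f)).
\]
Applying $-\otimes_{\mathbb{C}[S]}\mathbb{C}[S]/S_{H}$ to the first presentation kills $S_{H}$ in the numerator, and by compatibility (1) the map $\sigma$ carries $\mathscr{D}_{X,\xpoint}[S]\cdot\psi_{F}(\Der_{X}(-\log f))$ onto $\mathscr{D}_{X,\xpoint}[s_{1},\dots,s_{m}]\cdot\psi_{H}(\Der_{X}(-\log f))$; hence the result is precisely the presentation of $\mathscr{D}_{X,\xpoint}[S]H^{S}$, and we obtain a $\mathscr{D}_{X,\xpoint}[s_{1},\dots,s_{m}]$-isomorphism
\[
\mathscr{D}_{X,\xpoint}[S]F^{S}\big/S_{H}\mathscr{D}_{X,\xpoint}[S]F^{S}\ \xrightarrow{\ \simeq\ }\ \mathscr{D}_{X,\xpoint}[S]H^{S}
\]
carrying the class of $F^{S}$ to $H^{S}$. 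Quotienting this isomorphism further by the image of $I'$ and invoking compatibility (3) yields the top horizontal isomorphism of \eqref{diagram- nabla comm diagram}; the identical argument with $A$ replaced by $A-\textbf{a}$ gives the bottom one.

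Finally I would check commutativity. Recall that $\nabla^{\textbf{a}}$ on $\mathscr{D}_{X,\xpoint}[S]F^{S}$ is the restriction of the $\mathscr{D}_{X,\xpoint}$-linear endomorphism of the ambient free $\mathscr{O}_{X,\xpoint}[f_{1}^{-1},\dots,f_{r}^{-1},S]$-module $N_{F}$ on $F^{S}$ given by $\tau_{\textbf{a}}$ followed by multiplication by $f^{\textbf{a}}$ (since $F^{S+\textbf{a}}=f^{\textbf{a}}F^{S}$), and similarly $\nabla^{\textbf{a}}$ on $\mathscr{D}_{X,\xpoint}[S]H^{S}$ is $\tau_{\textbf{a}}$ followed by multiplication by $h^{\textbf{a}}$. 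Compatibility (2) is exactly the statement that these two endomorphisms are intertwined by the isomorphism $\mathscr{D}_{X,\xpoint}[S]F^{S}/S_{H}\mathscr{D}_{X,\xpoint}[S]F^{S}\simeq \mathscr{D}_{X,\xpoint}[S]H^{S}$ of the previous paragraph; since all of these constructions are natural in the $\mathbb{C}[S]$-module by which we further quotient, the induced square on the quotients by $(S-\Delta_{H}(A))$ and $(S-A)$, which is the square \eqref{diagram- nabla comm diagram}, commutes.

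I expect the main obstacle to be the rigour behind compatibility (1): because the $\delta\bullet f_{k}/f_{k}$ are not regular functions, one cannot literally apply $\sigma$ term by term to the defining formula for $\psi_{F}(\delta)$ in Definition \ref{def-gen by derivations} and must instead work in $\mathscr{D}_{X,\xpoint}[f^{-1}][S]$ (or argue via the induced action on $N_{F}$, where $\sigma(\psi_{F}(\delta))\bullet H^{S}=0$ follows at once from $\psi_{F}(\delta)\bullet F^{S}=0$, and then identify $\sigma(\psi_{F}(\delta))$ with $\psi_{H}(\delta)$). Relatedly, in the third paragraph one must be careful that the passage from a mere surjection to an isomorphism genuinely uses the derivation-presentations: tensoring the inclusion $\mathscr{D}_{X,\xpoint}[S]F^{S}\hookrightarrow N_{F}$ with $\mathbb{C}[S]/S_{H}$ is only right exact, so without the explicit presentations one would only know that $\mathscr{D}_{X,\xpoint}[S]F^{S}/S_{H}\mathscr{D}_{X,\xpoint}[S]F^{S}$ surjects onto $\mathscr{D}_{X,\xpoint}[S]H^{S}$. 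These are precisely the points handled, for $\textbf{a}=\textbf{1}$, in Proposition 2.33 and Remark 3.3 of \cite{Bath1}.
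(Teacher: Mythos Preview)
Your proposal is correct and follows essentially the same approach as the paper's proof: both use that $\ann_{\mathscr{D}_{X,\xpoint}[S]}F^{S}$ and $\ann_{\mathscr{D}_{X,\xpoint}[S]}H^{S}$ are generated by derivations (Theorem \ref{thm- gen by derivations}), observe via the product rule that $\psi_{F}(\delta)$ reduces to $\psi_{H}(\delta)$ modulo $S_{H}$, and deduce commutativity from the very definition of $\nabla$ as induced by $F^{S}\mapsto F^{S+\Delta_{H}(\textbf{a})}$ (resp.\ $H^{S}\mapsto H^{S+\textbf{a}}$). Your treatment is simply more explicit---you name the surjection $\sigma$, isolate the three compatibilities, and flag the subtlety that $\delta\bullet f_{k}/f_{k}$ must be interpreted correctly---whereas the paper compresses all of this into a few lines and treats only the case where $H$ is the trivial factorization, remarking that the general case is similar.
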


\begin{proof}
We first show the horizontal maps are isomorphisms. We will deal with the case $H$ is the trivial factorization $f = f$--the more general case follows similarly. So $A = (a_{1}).$ Note that $\mathscr{D}_{X, \xpoint}[S] \cdot (S - \Delta_{H}(A)) = \mathscr{D}_{X, \xpoint}[S] \cdot (s_{1} - s_{2}, \dots, s_{r-1} - s_{r}, s_{1} - a_{1})$, cf. Definition \ref{def-coarser}. By the product rule, for a logarithmic derivation $\delta$, the image of $\psi_{F,\xpoint}(\delta)$ modulo $\mathscr{D}_{X,\xpoint}[S] \cdot S_{H}$ is precisely $\psi_{H,\xpoint}(\delta).$ Since $\ann_{\mathscr{D}_{X,\xpoint}[S]}F^{S}$ and $\ann_{\mathscr{D}_{X,\xpoint}[S]}H^{S}$ are both generated by derivations by Theorem \ref{thm- gen by derivations}, the horizontal maps are isomorphisms.

That the diagram is commutative follows from the fact $\nabla_{A}^{\textbf{a}}$ (resp. $\nabla_{\Delta_{H}(A)}^{\Delta_{H}(\textbf{a})}$) is induced by sending $H^{S} \mapsto H^{S+\textbf{a}}$ (resp. $F^{S} \mapsto F^{S+\Delta_{H}(\textbf{a})}$).
\end{proof}

In general we only know that $\Delta_{H}(Z(B_{H,\mathfrak{x}}^{\textbf{1}})) \subseteq Z(B_{F,\mathfrak{x}}^{\textbf{1}}) \cap \Delta_{H}(\mathbb{C}^{m})$, cf. Lemma 4.20 of \cite{BudurLocalSystems}. In our setting, the commutative diagram \eqref{diagram- nabla comm diagram} gives us equality:

\begin{theorem} \label{thm-intersecting with diagonal}
Suppose that $f$ is strongly Euler-homogeneous, Saito-holonomic, and tame and $F$ corresponds to any factorization $f = f_{1} \cdots f_{r}$. Further, assume that $H = h_{1} \cdots h_{m}$ corresponds to a coarser factorization of $f$, $\textbf{a} \in \mathbb{N}^{m}$ such that $h_{1}^{a_{1}} \cdots h_{m}^{a_{m}}$ is not a unit, and $A \in \mathbb{C}^{m}$. Then
\begin{equation} \label{eqn-diagonal property}
A \in Z(B_{H,\xpoint}^{\textbf{a}}) \text{ if and only if } \Delta_{H}(A) \in Z(B_{F,\xpoint}^{\Delta_{H}(\textbf{a})}).
\end{equation}
Setting $H = (f)$ and $\textbf{a} = 1 \in \mathbb{N}$ we obtain
\[
Z(B_{f,\mathfrak{x}}) = Z(B_{F, \mathfrak{x}}^{\textbf{1}}) \cap \{s_{1} = \cdots = s_{r} \}.
\]
Here points on the diagonal of $\mathbb{C}^{r}$ are naturally identified with points of $\mathbb{C}$.
\end{theorem}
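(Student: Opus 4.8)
The plan is to turn the conditions ``$A\in Z(B_{H,\mathfrak{x}}^{\textbf{a}})$'' and ``$\Delta_{H}(A)\in Z(B_{F,\mathfrak{x}}^{\Delta_{H}(\textbf{a})})$'' into literally the same nonvanishing statement, by routing both through the cokernel of a $\nabla$-map and then invoking the commutative square of Proposition~\ref{prop-nabla commutative diagram}, whose horizontal arrows are isomorphisms. Put $\textbf{b}:=\Delta_{H}(\textbf{a})\in\mathbb{N}^{r}$, and observe that $f^{\textbf{b}}=h_{1}^{a_{1}}\cdots h_{m}^{a_{m}}=h^{\textbf{a}}$, which is not a unit by hypothesis. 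Since $f$ is strongly Euler-homogeneous, Saito-holonomic, and tame, Theorem~\ref{thm- F^S / F^S+1 CM} applies both to $H$ with exponent $\textbf{a}$ and to $F$ with exponent $\textbf{b}$: the modules $\mathscr{D}_{X,\mathfrak{x}}[S]H^{S}/\mathscr{D}_{X,\mathfrak{x}}[S]H^{S+\textbf{a}}$ and $\mathscr{D}_{X,\mathfrak{x}}[S]F^{S}/\mathscr{D}_{X,\mathfrak{x}}[S]F^{S+\textbf{b}}$ are both $(n+1)$-Cohen--Macaulay, so Proposition~\ref{prop-ZeroLoci nabla result} is available for each of them.

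First I would record the cokernel computation already implicit in the proof of Corollary~\ref{cor-three equivalent things nabla}, now for an arbitrary exponent. Since $\nabla^{\textbf{a}}$ sends $P(S)H^{S}\mapsto P(S+\textbf{a})H^{S+\textbf{a}}$ and the substitution $P(S)\mapsto P(S+\textbf{a})$ is a bijection of $\mathscr{D}_{X,\mathfrak{x}}[S]$, the image of $\nabla^{\textbf{a}}\colon\mathscr{D}_{X,\mathfrak{x}}[S]H^{S}\to\mathscr{D}_{X,\mathfrak{x}}[S]H^{S}$ is exactly $\mathscr{D}_{X,\mathfrak{x}}[S]H^{S+\textbf{a}}$. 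Passing to quotients, for every $A\in\mathbb{C}^{m}$ the cokernel of
\[
\nabla_{A+\textbf{a}}^{\textbf{a}}\colon\frac{\mathscr{D}_{X,\mathfrak{x}}[S]H^{S}}{(S-(A+\textbf{a}))\mathscr{D}_{X,\mathfrak{x}}[S]H^{S}}\longrightarrow\frac{\mathscr{D}_{X,\mathfrak{x}}[S]H^{S}}{(S-A)\mathscr{D}_{X,\mathfrak{x}}[S]H^{S}}
\]
is canonically isomorphic to
\[
\frac{\mathscr{D}_{X,\mathfrak{x}}[S]H^{S}}{\mathscr{D}_{X,\mathfrak{x}}[S]H^{S+\textbf{a}}}\otimes_{\mathbb{C}[S]}\frac{\mathbb{C}[S]}{(s_{1}-A_{1},\dots,s_{m}-A_{m})},
\]
and likewise with $H,\textbf{a},A$ replaced by $F,\textbf{b},B$ for any $B\in\mathbb{C}^{r}$.

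Next I would apply Proposition~\ref{prop-nabla commutative diagram} (with exponent $\textbf{a}$) at the point $A+\textbf{a}\in\mathbb{C}^{m}$; since $\Delta_{H}$ is additive, $\Delta_{H}(A+\textbf{a})=\Delta_{H}(A)+\textbf{b}$, so the left vertical arrow of that square is $\nabla_{\Delta_{H}(A)+\textbf{b}}^{\textbf{b}}$ and the right vertical arrow is $\nabla_{A+\textbf{a}}^{\textbf{a}}$. As the two horizontal arrows are isomorphisms, these two vertical maps have isomorphic cokernels, so the previous paragraph gives
\[
\frac{\mathscr{D}_{X,\mathfrak{x}}[S]H^{S}}{\mathscr{D}_{X,\mathfrak{x}}[S]H^{S+\textbf{a}}}\otimes_{\mathbb{C}[S]}\frac{\mathbb{C}[S]}{(s_{1}-A_{1},\dots,s_{m}-A_{m})}\;\cong\;\frac{\mathscr{D}_{X,\mathfrak{x}}[S]F^{S}}{\mathscr{D}_{X,\mathfrak{x}}[S]F^{S+\textbf{b}}}\otimes_{\mathbb{C}[S]}\frac{\mathbb{C}[S]}{(S-\Delta_{H}(A))}
\]
as $\mathscr{D}_{X,\mathfrak{x}}$-modules. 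By Proposition~\ref{prop-ZeroLoci nabla result}, which is legitimate by the Cohen--Macaulayness noted above, the left-hand side is nonzero precisely when $A\in Z(B_{H,\mathfrak{x}}^{\textbf{a}})$, while the right-hand side is nonzero precisely when $\Delta_{H}(A)\in Z(B_{F,\mathfrak{x}}^{\textbf{b}})=Z(B_{F,\mathfrak{x}}^{\Delta_{H}(\textbf{a})})$; this is exactly \eqref{eqn-diagonal property}. For the final assertion I would specialize to the trivial factorization $H=(f)$, so that $m=1$, $\Delta_{H}\colon\mathbb{C}\to\mathbb{C}^{r}$ is the diagonal embedding, $\textbf{a}=1\in\mathbb{N}$, $\Delta_{H}(1)=\textbf{1}$, and $h^{\textbf{a}}=f$ is not a unit; then \eqref{eqn-diagonal property} becomes ``$A\in Z(B_{f,\mathfrak{x}})$ if and only if $(A,\dots,A)\in Z(B_{F,\mathfrak{x}}^{\textbf{1}})$'', and since every point of $\{s_{1}=\cdots=s_{r}\}$ is of this form, this is precisely $Z(B_{f,\mathfrak{x}})=Z(B_{F,\mathfrak{x}}^{\textbf{1}})\cap\{s_{1}=\cdots=s_{r}\}$ under the identification of the diagonal with $\mathbb{C}$.

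All of the substantive content here is imported: the Cohen--Macaulayness of Theorem~\ref{thm- F^S / F^S+1 CM} (which is what unlocks Proposition~\ref{prop-ZeroLoci nabla result}) and the ``$\ann$ generated by derivations'' property underlying Proposition~\ref{prop-nabla commutative diagram}. The only delicate points, and the main obstacle in writing this out cleanly, are bookkeeping: keeping the $\textbf{a}$-shifts in the source and target of $\nabla_{A+\textbf{a}}^{\textbf{a}}$ consistent; verifying that $f^{\Delta_{H}(\textbf{a})}=h^{\textbf{a}}$ is genuinely not a unit so that Theorem~\ref{thm- F^S / F^S+1 CM} may be invoked on the $F$-side; and confirming that Proposition~\ref{prop-nabla commutative diagram} goes through for a general exponent $\textbf{a}$ rather than only $\textbf{a}=\textbf{1}$ --- which it does, because its horizontal isomorphisms come solely from $\ann_{\mathscr{D}_{X,\mathfrak{x}}[S]}F^{S}$ and $\ann_{\mathscr{D}_{X,\mathfrak{x}}[S]}H^{S}$ being generated by derivations (Theorem~\ref{thm- gen by derivations}), which is independent of the exponent, and its commutativity is built into the definition of the $\nabla$-maps.
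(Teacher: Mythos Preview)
Your proof is correct and follows essentially the same approach as the paper: both identify the cokernel of $\nabla^{\textbf{a}}$ with the relevant tensor product, invoke Proposition~\ref{prop-ZeroLoci nabla result} (licensed by the Cohen--Macaulayness of Theorem~\ref{thm- F^S / F^S+1 CM}), and then appeal to the commutative square of Proposition~\ref{prop-nabla commutative diagram} to transfer nonvanishing between the $H$-side and the $F$-side. Your write-up is somewhat more explicit about the bookkeeping --- the shift by $\textbf{a}$, the verification that $f^{\Delta_{H}(\textbf{a})}=h^{\textbf{a}}$ is not a unit, and the additivity of $\Delta_{H}$ --- but the substance is identical.
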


\begin{proof}
Just as in the proof of Corollary \ref{cor-three equivalent things nabla}, the cokernel of $\nabla_{A}^{\textbf{a}}$ is 
\[
\mathscr{D}_{X,\xpoint}[S]H^{S} / \mathscr{D}_{X,\xpoint}[S]H^{S+\textbf{a}} \otimes_{\mathbb{C}[S]} \mathbb{C}[S] / \mathbb{C}[S] \cdot (s_{1} - (A_{1} - a_{1})), \dots, (s_{r} - (A_{r} - a_{r})).
\]
So by Proposition \ref{prop-ZeroLoci nabla result}, the surjectivity of $\nabla_{A}^{\textbf{a}}$ precisely characterizes membership in $Z(B_{H,\xpoint}^{\textbf{a}})$. Now use the commutative diagram \eqref{diagram- nabla comm diagram}.
\end{proof}

\begin{example} \label{ex-diagonal property without good hypotheses}
Returning to our non-Saito-holonomic example $f = z(x^{5} + y^{5} + zx^{2}y^{3})$ and $F=(z, (x^{5} + y^{5} + zx^{2}y^{3}))$ from Section 4.2 of of \cite{BahloulOakuLocalBSIdeals}, Macaulay2 verifies this diagonal property still holds: $Z(B_{F,0}^{\textbf{1}}) \cap \{s_{1} = s_{2} \} = Z(B_{F,0}^{\textbf{1}}). $
\end{example}

\subsection{Hyperplane Arrangements} \text{ }

In this subsection we document some applications of the previous result for hyperplane arrangements. First, using the diagonal property \eqref{eqn-diagonal property} of Theorem \ref{thm-intersecting with diagonal} we will give formulae for the Bernstein--Sato ideals of any free, central, possibly non-reduced hyperplane arrangement (equipped with any factorization) and corresponding estimates when ``freeness'' is replaced with ``tameness.'' Second, we will build up the necessary notation and facts for our formulae for the Bernstein--Sato ideals attached to generic arrangements appearing in the next subsection. Additionally this will let us prove the factorization $F$ into linear forms has a very special property: under the assumption of tameness, the attached Bernstein--Sato ideal equals its radical.

To begin with the applications, suppose $f$ is a free, central hyperplane arrangement in $X = \mathbb{C}^{n}$, cf. Definition \ref{def-log forms}. In Theorem 1 of \cite{MaisonobeFreeHyperplanes}, Maisonobe showed $Z(B_{F,0}^{\textbf{1}})$ is combinatorially determined provided $f$ is reduced and $F$ is a factorization of $f$ into linear forms; in Theorem 1.4 of \cite{Bath2} we generalized this to arbitrary factorizations $F$ of a reduced $f$, certain factorizations $F$ of a non-reduced $f$, and we computed $Z(B_{f,0})$ if $f$ is a power of a reduced, central, free arrangement. We also outlined in Remark 4.28 of loc. cit. how to extend this result to any factorization $F$ of a possibly non-reduced $f$. Now we can do this:

\begin{corollary} \text{\normalfont (cf. Remark 4.28 \cite{Bath2})} \label{cor-free arrangements}
Suppose that $f$ is a free, central, and possibly non-reduced hyperplane arrangement and $F$ corresponds to any factorization $f = f_{1} \cdots f_{r}$. Then $Z(B_{F,0}^{\textbf{1}})$ is combinatorially determined and explicitly given by Theorem 1.4, equation (1.2) of \cite{Bath2}. In particular this gives a combinatorial formula for the roots of the Bernstein--Sato polynomial of $f$.
\end{corollary}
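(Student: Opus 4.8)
The plan is to reduce to the factorization of $f$ into linear forms, where the answer is supplied by Theorem~1.4 of \cite{Bath2}, and then propagate it to an arbitrary factorization via the diagonal property of Theorem~\ref{thm-intersecting with diagonal}; this carries out the outline of Remark~4.28 of \cite{Bath2}. First I would record the standing hypotheses of Theorem~\ref{thm-intersecting with diagonal}: a central hyperplane arrangement is strongly Euler-homogeneous (Remark~\ref{rmk-strongly Euler}) and Saito-holonomic (Remark~\ref{rmk-Saito-holonomic}), and freeness implies tameness and depends only on $f_{\mathrm{red}}$ (Remark~\ref{rmk-log ddifferential forms}). We may assume $f$ is not a unit, the statement being trivial otherwise, so $f^{\textbf{1}}=f$ is not a unit and Theorem~\ref{thm-intersecting with diagonal} is available for every factorization of $f$.

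Next, write $f=\ell_{1}^{m_{1}}\cdots\ell_{p}^{m_{p}}$ with the $\ell_{i}$ pairwise non-proportional linear forms, put $d=\sum_{i}m_{i}$, and let $G$ be the factorization of $f$ into these $d$ linear forms counted with multiplicity. Given an arbitrary factorization $F=(f_{1},\dots,f_{r})$ of $f$, each $f_{k}=\prod_{i}\ell_{i}^{e_{ik}}$ with $\sum_{k}e_{ik}=m_{i}$; assigning to $f_{k}$ a block consisting of $e_{ik}$ of the copies of $\ell_{i}$ (for each $i$) exhibits $F$ as a coarser factorization of $G$ in the sense of Definition~\ref{def-coarser}, with associated embedding $\Delta_{F}\colon\mathbb{C}^{r}\hookrightarrow\mathbb{C}^{d}$ satisfying $\Delta_{F}(\textbf{1})=\textbf{1}$. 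By Theorem~1.4 and equation~(1.2) of \cite{Bath2}, $Z(B_{G,0}^{\textbf{1}})$ is the combinatorially determined union of hyperplanes described there. Applying Theorem~\ref{thm-intersecting with diagonal} with its fine factorization taken to be $G$, its coarse factorization $H$ taken to be $F$, and $\textbf{a}=\textbf{1}$, one obtains
\[
Z(B_{F,0}^{\textbf{1}})=\Delta_{F}^{-1}\bigl(Z(B_{G,0}^{\textbf{1}})\bigr).
\]
Since $\Delta_{F}$ depends only on the exponents $e_{ik}$, i.e.\ on the combinatorics of the factorization, and $Z(B_{G,0}^{\textbf{1}})$ is combinatorial, so is $Z(B_{F,0}^{\textbf{1}})$.

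The remaining step is to check that $\Delta_{F}^{-1}$ applied to the explicit formula~(1.2) of \cite{Bath2} for $G$ reproduces that same formula for $F$, now with the degrees $d_{k}=\deg f_{k}$ as coefficients: pulling back along $\Delta_{F}$ substitutes the variable $s_{k}$ for every copy $s_{i,j}$ lying in the $k$-th block, turning a linear form of type $\sum s_{i,j}+c$ occurring in~(1.2) for $G$ into $\sum_{k}d_{k}^{(Y)}s_{k}+c$, where $d_{k}^{(Y)}$ counts the hyperplanes of $f_{k}$ through the relevant flat $Y$, and turning a form $s_{i,j}+1$ into $s_{k}+1$; this is exactly the shape of~(1.2) written for $F$, and no empty components arise since the constants are nonzero. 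Specializing $F$ to the trivial factorization $f=f$ gives $Z(B_{f,0})=Z(B_{G,0}^{\textbf{1}})\cap\{s_{1}=\cdots=s_{d}\}$, which is combinatorially determined; since $B_{f,0}$ is generated by the Bernstein--Sato polynomial of $f$, this exhibits its set of roots as combinatorial. The one point requiring care — the expected main obstacle — is confirming that the factorization into linear forms with multiplicity of a possibly non-reduced free arrangement is genuinely among the cases for which Theorem~1.4 of \cite{Bath2} establishes~(1.2), as Maisonobe's argument in \cite{MaisonobeFreeHyperplanes} is stated only for reduced arrangements; granting that, the rest is bookkeeping with the explicit formula.
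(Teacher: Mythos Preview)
Your proposal is correct and follows essentially the same route as the paper: take $H$ (your $G$) to be the factorization into linear forms, invoke Theorem~1.4 of \cite{Bath2} for $Z(B_{H,0}^{\textbf{1}})$, apply the diagonal property \eqref{eqn-diagonal property} of Theorem~\ref{thm-intersecting with diagonal} to pull back to an arbitrary coarser factorization $F$, and observe that $\Delta_{F}^{-1}$ of the explicit formula~(1.2) reproduces that formula for $F$. Your flagged concern is in fact settled: Theorem~1.4 of \cite{Bath2} does cover the factorization into linear forms of a possibly non-reduced free arrangement, so no gap remains.
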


\begin{proof}
By Theorem 1.4 of \cite{Bath2}, there is an explicit combinatorial formula for $B_{H,\mathfrak{x}}^{\textbf{1}}$ for a free, central, possibly non-reduced $f$ provided $H$ is corresponds to a factorization into linear forms. It is immediate (and part of the construction) that if $F$ is another factorization of $f$, then $\Delta_{F}^{-1}(Z(B_{H,0}^{\textbf{1}}))$ agrees with the formula (1.2) of Theorem 1.4 of loc. cit. for $V(B_{F})$ even without the assumption ``unmixed up to units.'' (To use the formula therein, set $f^{\prime} = 1$. And recall that, cf. Convention \ref{convention}, Remark 4.9 of \cite{Bath2}, $V(B_{F})$ in \cite{Bath2} agrees with $Z(B_{F,0}^{\textbf{1}})$ here since the global Bernstein--Sato ideal agrees with the one at the origin by centrality.) The claim follows by \eqref{eqn-diagonal property} of Theorem \ref{thm-intersecting with diagonal}.
\end{proof}

As this paper was being finished, Wu (independently) used in \cite{WuHyperplanes} the same diagonal procedure as in Corollary \ref{cor-free arrangements} to recover a special case of our formula for $Z(B_{f,0})$ from Theorem 1.4 of \cite{Bath2}. Specifically he considered the case of free, central, and reduced arrangements. He obtains the diagonal property \eqref{eqn-diagonal property} by different means, cf. Theorem 1.1 in \cite{WuHyperplanes}. To deal with the non-reduced case in full generality, i.e. to cover the cases not included in our \cite{Bath2}, one needs the duality formula Theorem 3.9 of \cite{Bath2}, and the consequent symmetry formulae Theorem 3.16 and Corollary 3.18 from loc. cit. This is because the duality computations differ slightly in the non-reduced setting.

In \cite{uli}, Walther demonstrated that the roots of the b-function of a hyperplane arrangement many not be combinatorially determined. Theorem \ref{thm-intersecting with diagonal} lets us extend this demonstration to multivariate Bernstein--Sato ideals:

\begin{example} \label{ex-not combinatorial}
Consider Walther's hyperplane arrangements (Example 5.10 \cite{uli}):
\begin{align*}
f = xyz(x + 3z)(x + y + z)(x + 2y + 3z)(2x + y + z)(2x + 3y + z)(2x + 3y + 4z); \\
g = xyz(x + 5z)(x + y + z)(x + 3y + 5z)(2x + y + z)(2x + 3y + z)(2x + 3y + 4z).
\end{align*}
These have the same intersection lattice but different b-functions: $f$ has $-\frac{16}{9}$ as a root; $g$ does not. See also Remark 4.14.(iv) of \cite{SaitoArrangements}. Since $n = 3$, Remark \ref{rmk-log ddifferential forms} shows $f$ and $g$ are tame. Since a factorization $F$ of $f$ (resp. $G$ of $g$) amounts to grouping hyperplanes of $\Variety(f) = \Variety(g)$ into sets, we can regard $F$ and $G$ as equivalent if they correspond to the same choices of hyperplanes. Thus if $Z(B_{F,0}^{\textbf{1}}) = Z(B_{G, 0}^{\textbf{1}})$ for equivalent $F$ and $G$, then $Z(B_{f,0})$ would equal $Z(B_{g,0})$ by \eqref{eqn-diagonal property} of Theorem \ref{thm-intersecting with diagonal}. This is impossible by construction.

\end{example}

It will hereafter be easier to work in the algebraic context. So $X = \mathbb{C}^{n}$, $f$ is a tame, central, and possibly non-reduced hyperplane arrangement equipped with an arbitrary factorization $F$. In this algebraic situation we can define multivariate Bernstein--Sato ideals using the Weyl algebra $\Weyl$ as well as the naturally defined $\Weyl[S]$-module $\Weyl[S]F^{S}$. We will write $B_{F}^{\textbf{a}}$ for the Bernstein--Sato ideal corresponding to the algebraic functional equation using $\Weyl[S]$ and $\textbf{a}$. Because $f$ is central, one can check that the local, analytically defined $B_{F,0}^{\textbf{a}}$ is the same as the global, algebraically defined Bernstein--Sato ideal $B_{F}^{\textbf{a}}$, see, for example, Remark 4.9. of \cite{Bath2}. 

One goal is to provide estimates to $Z(B_{F}^{\textbf{a}})$ that (1) are key to the formula for Bernstein--Sato ideals of generic arrangements and (2) are mild generalizations of the estimates appearing in \cite{Bath2}. (There $\textbf{a} = \textbf{1}$ was exclusively considered.) We require notation particular to  arrangements.

\begin{define} \label{def-intersection lattice etc}
For $f$ a central, possibly non-reduced hyperplane arrangement of degree $d$ with a factorization into homogeneous linear terms $f = l_{1} \cdots l_{d}$, we will usually denote $\Variety(f)$ by $\mathscr{A} \subseteq \mathbb{C}^{n}$. We can define the \emph{intersection lattice} $L(\mathscr{A})$ of $f$:
\[
L(\mathscr{A}) = \left\{ \bigcap_{i \in I} \Variety(l_{i}) \mid I \subseteq [d] \right\}.
\]
We say $X$ is an \emph{edge} if $X \in L(\mathscr{A})$; we denote the \emph{rank} of $X$ by $r(X)$. For an edge $X$, let $J(X) \subseteq [d]$ signify the largest subset of the hyperplanes $\{ \Variety(l_{i}) \}$ that contain $X$. (If $f$ is non-reduced, the ``largest'' assumption matters.) Then $X = \cap_{j \in J(X)} \Variety(l_{j})$. To each edge $X$, we associate a subarrangement $f_{X}$ given by 
\[
f_{X} = \prod_{j \in J(X)} l_{j}
\]
We denote the degree of $f_{X}$, that is $\abs{J(X)}$, by $d_{X}$. The edge $X$ is \emph{decomposable} if there is some change of coordinates so that $f_{X}$ can be written as a product of two hyperplanes using disjoint, nonempty, sets of variables. Otherwise $X$ is \emph{indecomposabe}. If $r(X) < n$, then $X$ is automatically decomposable as an arrangement in $\mathbb{C}^{n}$; in this case we decide if $X$ is decomposable after naturally viewing it in $\mathbb{C}^{r(X)}$.

For a different factorization $f = f_{1} \cdots f_{r}$ we can identify (up to a unit) each $f_{k}$ with a collection of the $l_{i}$. Let $S_{k} \subseteq [d]$ be linear forms defining $f_{k}:$
\[
f_{k} = \prod_{i \in S_{k}} l_{i}.
\]
The factorization $f = f_{1} \cdots f_{r}$ induces a factorization on $f_{X} = f_{X, 1} \cdots f_{X,r}$ with 
\[
f_{X,k} = \prod_{i \in J(X) \cap S_{k}} l_{i}.
\]
Write $\deg f_{k}$ as $d_{k}$, $\deg f_{X,k}$ as $d_{X,k}.$

\end{define}

First we slightly adapt an argument from \cite{MaisonobeFreeHyperplanes} and \cite{Bath2} to find a particular nice element of $B_{F}^{\textbf{a}}$. The proof is essentially the same as in \cite{Bath2}.

\begin{proposition} \text{\normalfont (cf. Proposition 10 \cite{MaisonobeFreeHyperplanes}, Theorem 4.18 \cite{Bath2})} \label{prop- nice element BS ideal}
Suppose that $f$ is a central, possibly non-reduced hyperplane arrangement and $F$ a factorization of $f$ into homogeneous linear terms. Then there exists an $N \in \mathbb{N}$ such that 
\begin{equation} \label{eqn-nice element of BS ideal}
\prod_{\substack{X \in L(\mathscr{A}) \\ X \text{ indecomposable}}} \prod_{\ell = 0}^{N}  \left( \sum_{j \in J(X)} s_{j} + r(X) + \ell \right) \in B_{F}^{\textbf{a}}.
\end{equation}
\end{proposition}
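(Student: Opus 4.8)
The plan is to adapt the stratification argument of \cite[Proposition~10]{MaisonobeFreeHyperplanes} and \cite[Theorem~4.18]{Bath2}, proceeding by induction on $n = \dim X$; the case $n = 0$ (equivalently, $f$ a unit) is trivial. If $f^{\textbf{a}}$ is a unit then $B_F^{\textbf{a}} = \mathbb{C}[S]$ and there is nothing to prove, so assume otherwise. Replacing $\mathscr{A}$ by its essentialization, I may also assume $\mathscr{A}$ is essential, so that its center is the origin with $r(\{0\}) = n$ and $J(\{0\}) = [d]$; this changes neither $B_F^{\textbf{a}}$ nor the combinatorial data $J(\cdot), r(\cdot)$, nor decomposability of the edges. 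Since $f$ is central I may pass freely between the local analytic module at $0$ and the global algebraic one, cf.\ \cite[Remark~4.9]{Bath2}.

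If $\mathscr{A}$ is decomposable, say $\mathscr{A} = \mathscr{A}_1 \times \mathscr{A}_2$ in complementary sets of variables, then $F^S$ factors as $F_1^S F_2^S$ in those variables, and multiplying the functional equations supplied by the inductive hypothesis for $\mathscr{A}_1$ and $\mathscr{A}_2$ (whose differential operators commute, living in disjoint variables) yields $B_{F_1}^{\textbf{a}} B_{F_2}^{\textbf{a}} \subseteq B_F^{\textbf{a}}$. The indecomposable edges of $\mathscr{A}$ are precisely the products of an indecomposable edge of one factor with the ambient space of the other, compatibly with $J(\cdot)$ and $r(\cdot)$, so the element \eqref{eqn-nice element of BS ideal} for $\mathscr{A}$ factors as the product of the corresponding elements for $\mathscr{A}_1$ and $\mathscr{A}_2$ and hence lies in $B_F^{\textbf{a}}$ once $N$ is chosen large (enlarging the range of $\ell$ only multiplies in extra factors, so a common $N$ is harmless). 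So I may assume $\mathscr{A}$ is indecomposable.

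Next I would treat the punctured space. For $p \neq 0$ let $X_p$ be the smallest edge through $p$, so $r(X_p) < n$; locally at $p$ one has $f = f_{X_p} \cdot u$ with $u$ a unit, whence $B_{F,p}^{\textbf{a}}$ equals the Bernstein--Sato ideal of the induced factorization of the essentialization of $\mathscr{A}_{X_p}$, a central arrangement in $\mathbb{C}^{r(X_p)}$ — a unit twist does not alter the $\mathscr{D}[S]$-module up to isomorphism, and a product of arrangements in disjoint variables splits off the trivial factor without affecting the $\mathbb{C}[S]$-annihilator. By induction this ideal contains $\prod_Y \prod_{\ell=0}^{N_p}\bigl(\sum_{j \in J(Y)} s_j + r(Y) + \ell\bigr)$, the product over the indecomposable edges $Y$ of that subarrangement, and those $Y$ are exactly the indecomposable edges $Z$ of $\mathscr{A}$ with $p \in Z$, with $J(Z) = J(Y)$ and $r(Z) = r(Y)$ (every hyperplane through such a $Z$ contains $X_p$). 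Setting $b_0(S) := \prod_{\substack{Z \text{ indecomposable}\\ Z \neq \{0\}}} \prod_{\ell=0}^{N_0}\bigl(\sum_{j \in J(Z)} s_j + r(Z) + \ell\bigr)$ with $N_0$ the maximum of the finitely many $N_p$, the local generator at each $p \neq 0$ divides $b_0(S)$, so $b_0(S) \in B_{F,p}^{\textbf{a}}$ for all $p \neq 0$; equivalently, writing $\mathscr{N} := \mathscr{D}_{\mathbb{C}^n}[S] F^S / \mathscr{D}_{\mathbb{C}^n}[S] F^{S + \textbf{a}}$, the cyclic submodule $\mathscr{D}_{\mathbb{C}^n}[S] \cdot \overline{b_0(S) F^S} \subseteq \mathscr{N}$ is supported at $\{0\}$.

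The remaining step — which I expect to be the main obstacle — is to annihilate this origin-supported piece using homogeneity. Grade $\mathscr{D}_{\mathbb{C}^n}[S]$ by $\deg x_i = 1$, $\deg \partial_i = -1$, $\deg s_j = 0$, and grade $\mathscr{N}$ so that $F^S$ has degree $0$. Since $F$ is a factorization into linear forms, the Euler derivation $E = \sum_i x_i \partial_i$ lies in $\Der_X(-\log f)$ and $\psi_F(E) = E - \sum_j s_j$ annihilates $F^S$, so on the degree-$k$ piece $\mathscr{N}_k$ the operator $E$ acts as the scalar $k + \sum_j s_j$. The submodule $\mathscr{D}_{\mathbb{C}^n}[S] \cdot \overline{b_0(S) F^S}$ is relative holonomic (Theorem~\ref{thm-ZeroLociII results} together with the fact that relative holonomicity descends to submodules, cf.\ Lemma~3.2.2 of \cite{ZeroLociI}) and supported at $\{0\}$; by the relative form of Kashiwara's equivalence — as in \cite[Theorem~4.18]{Bath2} — it is a successive extension of graded shifts of the delta-module $\mathscr{D}_{\mathbb{C}^n}/(\mathscr{D}_{\mathbb{C}^n} x_1 + \cdots + \mathscr{D}_{\mathbb{C}^n} x_n)$, and comparing the two computations of the $E$-action forces the shift whose generator lies in degree $\delta_0 \geq 0$ to be annihilated by $\sum_j s_j + n + \delta_0$. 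As $\overline{b_0(S) F^S} \in \mathscr{N}_0$ involves only finitely many such shifts, $\prod_{\ell=0}^{N_1}\bigl(\sum_{j \in [d]} s_j + n + \ell\bigr)$ annihilates it for some $N_1$. Finally, since $\mathscr{A}$ is essential and indecomposable, $\{0\}$ is itself an indecomposable edge with $J(\{0\}) = [d]$ and $r(\{0\}) = n$, so this last product is exactly the $X = \{0\}$ factor of \eqref{eqn-nice element of BS ideal}; taking $N = \max(N_0, N_1)$ (enlarging ranges harmlessly) gives $b_0(S) \prod_{\ell=0}^{N}\bigl(\sum_{j \in [d]} s_j + n + \ell\bigr) F^S \in \mathscr{D}_{\mathbb{C}^n}[S] F^{S + \textbf{a}}$, that is, \eqref{eqn-nice element of BS ideal} lies in $B_F^{\textbf{a}}$.
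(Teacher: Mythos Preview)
Your argument is correct in outline and reaches the same conclusion, but it takes a genuinely different route from the paper's at the ``origin'' step. Both proofs proceed by induction on rank and reduce to an essential indecomposable arrangement; the divergence is in the order of operations and in how the central edge is handled. The paper (following \cite[Theorem~4.18]{Bath2}) first majorizes $\textbf{a}$ by a constant vector $\textbf{p}=(p,\dots,p)$ and then, at the inductive step, uses the Euler identity directly to show $\prod_{\ell=0}^{t}(\sum_j s_j + n + \ell)\,F^S \in \Weyl[S]\cdot\mathfrak{m}^{t+1}F^S$ (this is \cite[Lemma~4.14]{Bath2}), redistributes a high power of $\mathfrak{m}$ among products $(\prod_{k\notin J(X)}f_k)^m$ over the rank-$(n{-}1)$ edges $X$, and only then invokes the inductive hypothesis on those edges; the sole non-cosmetic change from the $\textbf{a}=\textbf{1}$ case is the bound $m > \max\{k_X + p\}$ in place of $m > \max\{k_X\}$. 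You instead localize at every $p\neq 0$ first, apply induction to the smaller arrangements to obtain $b_0(S)$, and then kill the origin-supported remainder abstractly via a relative Kashiwara equivalence together with the Euler grading. Your route is more conceptual and handles general $\textbf{a}$ without the preliminary reduction to constant $\textbf{p}$; the paper's route is explicit and avoids appealing to any Kashiwara-type statement.

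Two small points deserve care in your write-up. First, the parenthetical ``as in \cite[Theorem~4.18]{Bath2}'' is misleading: that theorem does not use a Kashiwara equivalence but rather the explicit Euler computation above, so you should justify the relative version separately (it is elementary: a finitely generated $\mathscr{D}_{\mathbb{C}^n}[S]$-module supported at the origin is $\mathbb{C}[\partial]\otimes_{\mathbb{C}} V$ with $V$ its $\mathfrak{m}$-socle, a finitely generated $\mathbb{C}[S]$-module). Second, the assertion $\delta_0\geq 0$ needs a sentence of justification: writing the cyclic generator $\overline{b_0(S)F^S}\in M_0$ as $\sum_\alpha \partial^\alpha\otimes v_\alpha$ with $v_\alpha\in V_{|\alpha|}$, the $\mathscr{D}[S]$-action on $\mathbb{C}[\partial]\otimes V$ preserves the $\mathbb{C}[S]$-span of the $v_\alpha$, so $V$ is generated in degrees $\geq 0$; this is precisely what forces the linear factors to have the required form $\sum_j s_j + n + \ell$ with $\ell\geq 0$.
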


\begin{proof}
First, pick $\textbf{p} = (p, \dots, p) \in \mathbb{N}^{r}$ such that $\textbf{a} \leq \textbf{p}$ and $\textbf{1} \leq \textbf{p}$ termwise. Clearly $B_{F}^{\textbf{p}} \subseteq B_{F}^{\textbf{a}}$ so it suffices to prove the claim for $\textbf{a} = \textbf{p}.$ In Theorem 4.18 of \cite{Bath2} this is shown for $\textbf{a} = \textbf{1}$. The inductive argument on the rank of $f$ used there works here with one slightly non-cosmetic change. By the induction hypothesis, for each $X \in L(\mathscr{A})$ of rank $n-1$, there exists an operator $P_{X}$ of total order at most $k_{X}$ and a polynomial $b_{X} \in \mathbb{C}[S]$ such that $b_{X} \prod_{j \in J(X)} f_{j}^{s_{j}} = P_{X} \prod_{j \in J(X)} f_{j}^{s_{j}+p}.$ In loc. cit. we pick a natural number $m$ greater than the max of all the total order's of the differential operators coming from the inductive hypothesis therein. Here, if we pick $m > \max \{ k_{X} + p \mid X \in L(\mathscr{A}), r(X) = n-1\}$ then the argument in loc. cit. applies in this case as well. 
\end{proof}

If $f$ is tame, we can use this nice element of $B_{F,0}^{\textbf{a}}$ to obtain a multivariate generalization of Saito's Theorem 1 of \cite{SaitoArrangements}: the roots of the $b$-function of a reduced arrangement $f$ lie in $(-2 + \frac{1}{\deg f}, 0)$. For the reader's sake, we include the result of Theorem 4.11 from \cite{Bath2} about certain hyperplanes necessarily in $Z(B_{F,0}^{\textbf{1}})$ due to tameness.

\begin{corollary} \label{cor- multivariate root bounds}
Suppose that $f$ is a tame, central, and reduced hyperplane arrangement and $F$ corresponds to any factorization $f = f_{1} \cdots f_{r}$. Then $Z(B_{F,0}^{\textbf{1}})$ is a union of hyperplanes,
\begin{equation} \label{eqn-multivariate root subset}
\bigcup_{\substack{ X \in L(\mathscr{A}) \\ X \text{ indecomposable}}} \bigcup_{\ell = 0}^{d_{X} - 1} \{ \sum_{k=1}^{r} d_{X,k}s_{k} + r(X) + \ell = 0\} \subseteq Z(B_{F,0}^{\textbf{1}})
\end{equation}
and
\begin{equation} \label{eqn- multivariate root supset}
Z(B_{F, 0}^{\textbf{1}}) \subseteq \bigcup_{\substack{ X \in L(\mathscr{A}) \\ X \text{ indecomposable}}} \bigcup_{\ell = 0}^{\lceil T_{X} - 1 \rceil} \{ \sum_{k=1}^{r} d_{X,k}s_{k} + r(X) + \ell = 0\} 
\end{equation}
where $d = \deg f$, $d_{X} = \deg f_{X} = \abs{J(X)}$, $d_{X,k} = \deg f_{X,k} = \abs{J(X) \cap S_{k}}$, cf. Definition \ref{def-intersection lattice etc}, and $T_{X} = 2 d_{X} - \frac{d_{X}}{d} - r(X)$. 
\end{corollary}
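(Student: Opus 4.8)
The plan is to reduce everything to the factorization $L = (l_{1}, \dots, l_{d})$ of $f$ into linear forms, for which Proposition \ref{prop- nice element BS ideal} applies, and then transfer to a general $F$ via two instances of the diagonal property of Theorem \ref{thm-intersecting with diagonal}: the coarsening of $L$ to $F$, and the coarsening of $L$ to the trivial factorization $(f)$. Throughout one uses that centrality identifies the local objects at $0$ with the global ones, cf. Remark 4.9 of \cite{Bath2}.

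The containment \eqref{eqn-multivariate root subset} is Theorem 4.11 (cf. Theorem 1.3) of \cite{Bath2}, which invokes tameness, so the real work is the reverse containment \eqref{eqn- multivariate root supset}. I would first prove it for $L$. By Proposition \ref{prop- nice element BS ideal} with $\textbf{a} = \textbf{1}$, the ideal $B_{L}^{\textbf{1}} = B_{L,0}^{\textbf{1}}$ contains $\prod_{X} \prod_{\ell = 0}^{N} \bigl( \sum_{j \in J(X)} s_{j} + r(X) + \ell \bigr)$, the outer product over indecomposable edges, for some $N \in \mathbb{N}$; hence $Z(B_{L,0}^{\textbf{1}})$ lies inside the finite union of the hyperplanes $\mathcal{H}_{X,\ell} := \{ \sum_{j \in J(X)} s_{j} + r(X) + \ell = 0 \}$. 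On the other hand, arrangements are strongly Euler-homogeneous and Saito-holonomic and $f$ is tame, so Theorem \ref{thm- F^S / F^S+1 CM} and Proposition \ref{prop-CM implies pure} make $\mathscr{D}_{X,0}[S]L^{S} / \mathscr{D}_{X,0}[S]L^{S+\textbf{1}}$ an $(n+1)$-pure module; exactly as in the first paragraph of the proof of Theorem \ref{thm- purity implies principality}, $Z(B_{L,0}^{\textbf{1}})$ is then purely codimension one, and being contained in a finite union of codimension-one irreducible hyperplanes it equals the union of those $\mathcal{H}_{X,\ell}$ it contains.

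Now I would fix such an $\mathcal{H}_{X,\ell} \subseteq Z(B_{L,0}^{\textbf{1}})$ and intersect it with the diagonal $\{ s_{1} = \cdots = s_{d} \}$. Since $\abs{J(X)} = d_{X}$, this intersection is the single point $s = -(r(X) + \ell)/d_{X}$, and by Theorem \ref{thm-intersecting with diagonal} applied to the coarsening of $L$ to $(f)$ it lies in $Z(B_{f,0})$. Saito's Theorem 1 of \cite{SaitoArrangements} gives $Z(B_{f,0}) \subseteq (-2 + \tfrac{1}{d}, 0)$, hence $-(r(X)+\ell)/d_{X} > -2 + \tfrac{1}{d}$, i.e. $\ell < 2 d_{X} - \tfrac{d_{X}}{d} - r(X) = T_{X}$; since $\ell \in \mathbb{N}$ this forces $\ell \le \lceil T_{X} - 1 \rceil$, proving \eqref{eqn- multivariate root supset} for $L$. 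For a general $F = (f_{1}, \dots, f_{r})$ with $f_{k} = \prod_{i \in S_{k}} l_{i}$, the factorization $F$ is a coarsening of $L$ with diagonal embedding $\Delta_{F} \colon \mathbb{C}^{r} \to \mathbb{C}^{d}$, and $\sum_{j \in J(X)} (\Delta_{F}(A))_{j} = \sum_{k=1}^{r} \abs{J(X) \cap S_{k}} A_{k} = \sum_{k=1}^{r} d_{X,k} A_{k}$. So Theorem \ref{thm-intersecting with diagonal} ($A \in Z(B_{F,0}^{\textbf{1}})$ if and only if $\Delta_{F}(A) \in Z(B_{L,0}^{\textbf{1}})$) converts both the lower and upper bounds for $L$ into \eqref{eqn-multivariate root subset} and \eqref{eqn- multivariate root supset} for $F$. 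Finally, $Z(B_{F,0}^{\textbf{1}})$ is purely codimension one (Theorem \ref{thm- F^S / F^S+1 CM}, Proposition \ref{prop-CM implies pure}, proof of Theorem \ref{thm- purity implies principality}) and is contained in the finite union of hyperplanes \eqref{eqn- multivariate root supset}, so it is the union of a subset of those hyperplanes.

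I do not expect a single deep obstacle once the earlier results are granted: the content is realizing that the two coarsenings let Proposition \ref{prop- nice element BS ideal} and Saito's univariate bound cooperate, and that purity promotes ``contained in a union of hyperplanes'' to ``is a union of hyperplanes''. The spots needing care are handling a non-essential $f$ (pass to the essential quotient so that $r$ of the center equals $n$ before quoting Saito), checking that the degenerate edge $X = \mathbb{C}^{n}$ is harmless (it contributes nothing to either product), and the elementary identity $\ell < T_{X} \iff \ell \le \lceil T_{X} - 1 \rceil$ for $\ell \in \mathbb{N}$.
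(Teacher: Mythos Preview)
Your proposal is correct and follows essentially the same route as the paper: cite Theorem 4.11 of \cite{Bath2} for the lower bound, use Proposition \ref{prop- nice element BS ideal} (for the linear factorization) to get the upper bound with some large $N$, and then combine the diagonal property of Theorem \ref{thm-intersecting with diagonal} with Saito's univariate bound to sharpen $N$ to $\lceil T_{X}-1\rceil$. The only cosmetic difference is that the paper invokes principality (Corollary \ref{cor- tame implies BS principal}) together with the nice element to see that $Z(B_{F,0}^{\textbf{1}})$ is a union of hyperplanes, whereas you extract this from the purely-codimension-one consequence of $(n+1)$-purity; both are equivalent here, and your explicit passage through the linear factorization $L$ before coarsening to $F$ makes transparent what the paper's terse proof leaves implicit.
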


\begin{proof}
Since $f$ is tame, \eqref{eqn-multivariate root subset} follows from Theorem 4.11 of \cite{Bath2}; that $Z(B_{F, 0}^{\textbf{1}})$ is a union of hyperplanes follows from Corollary \ref{cor- tame implies BS principal} and Proposition \ref{prop- nice element BS ideal}. If we replace $T_{X}$ with some very large $N \in \mathbb{N}$, then \eqref{eqn- multivariate root supset} follows from \ref{prop- nice element BS ideal}. The choice of $T_{X}$ follows from Theorem \ref{thm-intersecting with diagonal} and Theorem 1 in \cite{SaitoArrangements}. 
\end{proof}

When $F$ corresponds to a factorization into linear forms, the nice element of $B_{F}^{\textbf{a}}$ from Proposition \ref{prop- nice element BS ideal} is reduced. The same argument strategy from Theorem \ref{thm- purity implies principality} then yields:

\begin{theorem} \label{thm- tame, linear factorizations, BS ideal reduced}
Suppose that $f$ is a tame, central possibly non-reduced hyperplane arrangement, $F$ corresponds to a factorization of $f$ into linear forms, and $f^{\textbf{a}}$ is not a unit. Then $B_{F}^{\textbf{a}} = \rad (B_{F}^{\textbf{a}}).$
\end{theorem}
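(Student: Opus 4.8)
The plan is to mimic the argument of Theorem \ref{thm- purity implies principality} almost verbatim, but exploiting the extra information that the explicit generator of $B_F^{\textbf a}$ coming from Proposition \ref{prop- nice element BS ideal} is already a product of distinct reduced linear factors. First I would record the set-up: since $f$ is a tame central arrangement it is in particular strongly Euler-homogeneous and Saito-holonomic (Remarks \ref{rmk-strongly Euler}, \ref{rmk-Saito-holonomic}), so by Corollary \ref{cor- tame implies BS principal} the ideal $B_F^{\textbf a}$ is principal, say $B_F^{\textbf a}=\Weyl[S]\cdot b(S)$ (equivalently $\mathbb C[S]\cdot b(S)$, working globally since $f$ is central, cf.\ Remark 4.9 of \cite{Bath2}). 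Moreover $\mathscr D_{X,0}[S]F^S/\mathscr D_{X,0}[S]F^{S+\textbf a}$ is $(n+1)$-Cohen--Macaulay by Theorem \ref{thm- F^S / F^S+1 CM}, hence $(n+1)$-pure by Proposition \ref{prop-CM implies pure}, and by the argument inside the proof of Theorem \ref{thm- purity implies principality} its zero locus $Z(B_F^{\textbf a})$ is purely codimension one. The goal is to show $b(S)$ is squarefree.

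Next I would bring in Proposition \ref{prop- nice element BS ideal}: there is an explicit element
\[
g(S):=\prod_{\substack{X\in L(\mathscr A)\\ X\text{ indecomposable}}}\prod_{\ell=0}^{N}\Bigl(\sum_{j\in J(X)}s_j+r(X)+\ell\Bigr)\in B_F^{\textbf a}.
\]
Because $F$ is a factorization into linear forms, each index $j$ corresponds to a single hyperplane, so the linear forms $\sum_{j\in J(X)}s_j+r(X)+\ell$ appearing in $g(S)$ are honest affine-linear polynomials in the $s_j$; they are pairwise non-proportional whenever the pair $(J(X),\ell)$ differs (distinct $J(X)$ give non-parallel hyperplanes, distinct $\ell$ give parallel but distinct ones). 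Hence $\rad(\mathbb C[S]\cdot g(S))$ is cut out by the squarefree polynomial $\tilde g(S)$ obtained by deleting repeated factors, and since $b(S)\mid g(S)$, every irreducible factor of $b(S)$ is one of these linear forms, each occurring in $g(S)$ to some multiplicity. So it remains only to rule out that $b(S)$ contains such a linear form to a power $\ge 2$.

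For the final step I would argue by the purity/codimension trick of Theorem \ref{thm- purity implies principality}. Write $b(S)=\beta(S)\alpha(S)$ where $\beta(S)=\rad(b(S))=\tilde b(S)$ is the squarefree radical and $\alpha(S)$ collects the excess multiplicities; suppose $\alpha(S)$ is a non-unit. Set
\[
N_{0}=\beta(S)\cdot\frac{\mathscr D_{X,0}[S]F^S}{\mathscr D_{X,0}[S]F^{S+\textbf a}}\subseteq \frac{\mathscr D_{X,0}[S]F^S}{\mathscr D_{X,0}[S]F^{S+\textbf a}}.
\]
Then $\alpha(S)$ annihilates $N_{0}$, so $Z(\ann_{\mathbb C[S]}N_{0})\subseteq Z(B_F^{\textbf a})\cap Z(\alpha(S))$; but $Z(\alpha(S))\subseteq Z(\beta(S))=Z(B_F^{\textbf a})$ is a union of some of the same hyperplanes, so $Z(B_F^{\textbf a})\cap Z(\alpha(S))=Z(\alpha(S))$ is still purely codimension one — here the trick of Theorem \ref{thm- purity implies principality}, where the intersection drops codimension, does not immediately apply, so this is where the extra input is needed. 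Instead I would observe that $\alpha(S)$ is a \emph{proper} divisor of $b(S)$ and therefore $\beta(S)$ does \emph{not} annihilate the module (else $\beta(S)\in B_F^{\textbf a}=\mathbb C[S]\cdot b(S)$ would force $b(S)\mid\beta(S)$, contradicting that $b$ has a repeated factor); hence $N_0\neq 0$. As a nonzero submodule of an $(n+1)$-pure relative holonomic module, $N_0$ is itself $(n+1)$-pure and relative holonomic (Lemma \ref{lemma-criterion for purity}, and the descent of relative holonomicity to submodules cited after Definition \ref{def-BS ideal general}), so $Z(\ann_{\mathbb C[S]}N_0)$ is purely codimension one and equals $Z(\alpha(S))$ by the same $p_2$-of-characteristic-variety reasoning as in Theorem \ref{thm- purity implies principality}. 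Now compare annihilators: $\ann_{\mathbb C[S]}N_0=(B_F^{\textbf a}:_{\mathbb C[S]}\beta(S))=\mathbb C[S]\cdot\alpha(S)$, whose zero locus is $Z(\alpha(S))$ — consistent but not yet a contradiction. The genuine obstruction, which I expect to be the crux, is to show $\alpha(S)$ must be a unit: I would do this by a local-at-a-generic-point argument on a hyperplane $\{L=0\}\subseteq Z(B_F^{\textbf a})$ of maximal multiplicity in $b$, using Proposition \ref{prop-ZeroLoci nabla result} and the $(n+1)$-Cohen--Macaulay property to identify the multiplicity of $L$ in $b(S)$ with the length of the corresponding $\mathscr D_X$-module at a generic point $A\in\{L=0\}$, then invoking that for a factorization into linear forms this length is $1$ because the relevant fibre $\frac{\mathscr D_{X,0}[S]F^S}{\mathscr D_{X,0}[S]F^{S+\textbf a}}\otimes_{\mathbb C[S]}\mathbb C[S]/(S-A)$ is (generically along the hyperplane) a simple $\mathscr D_X$-module supported on the corresponding edge — equivalently, via Theorem \ref{thm-intersecting with diagonal} reducing to the univariate $b$-function of $f_X$, whose relevant root has multiplicity one by the linear-forms structure. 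The expected main difficulty is precisely this multiplicity-one statement: everything else is a transcription of the purity bookkeeping in Theorem \ref{thm- purity implies principality}, but showing that no linear factor of $b(S)$ is repeated requires genuinely understanding the module along its codimension-one support, and the cleanest route is to push the whole question onto the diagonal and cite that the $b$-function exponents produced by a linear arrangement (equivalently, by Proposition \ref{prop- nice element BS ideal} together with the sharp lower bound \eqref{eqn-multivariate root subset} of Corollary \ref{cor- multivariate root bounds}) are simple.
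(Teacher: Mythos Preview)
Your setup is correct and in fact already complete, but you overlook the immediate conclusion. You establish that $B_F^{\textbf a}=\mathbb C[S]\cdot b(S)$ is principal (Corollary~\ref{cor- tame implies BS principal}), that the explicit element $g(S)$ of Proposition~\ref{prop- nice element BS ideal} lies in $B_F^{\textbf a}$, hence $b(S)\mid g(S)$, and that the linear factors of $g(S)$ are pairwise non-proportional (distinct $J(X)$ give distinct linear parts $\sum_{j\in J(X)}s_j$, distinct $\ell$ give distinct constants). That last statement says precisely that $g(S)$ is squarefree. But then $b(S)$, as a divisor of a squarefree element in the UFD $\mathbb C[S]$, is itself squarefree, and you are done. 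Your phrase ``each occurring in $g(S)$ to some multiplicity'' is the slip: that multiplicity is $1$, by what you just argued two sentences earlier.

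The paper's proof uses the same key fact (squarefreeness of the explicit element) but organizes it through a rerun of the purity argument rather than through principality of $B_F^{\textbf a}$: take $\gamma(S)$ a squarefree generator of $\rad(B_{F,0}^{\textbf a})$, so $\gamma\mid g$; since $g$ is squarefree, $\gamma$ and $g/\gamma$ are coprime and $Z(\gamma)\cap Z(g/\gamma)$ has codimension $\ge 2$; then $Q=\gamma(S)\cdot(\mathscr D_{X,0}[S]F^S/\mathscr D_{X,0}[S]F^{S+\textbf a})$ is killed by $g/\gamma$, whence $Z(B_Q)\subseteq Z(\gamma)\cap Z(g/\gamma)$, and $(n+1)$-purity forces $Q=0$, i.e.\ $\gamma\in B_{F,0}^{\textbf a}$. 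This is the codimension-drop trick you were looking for, applied with the \emph{right} splitting: one splits the squarefree element $g$ as $\gamma\cdot(g/\gamma)$, not the a priori unknown generator $b$ as $\beta\cdot\alpha$. With your splitting $b=\beta\alpha$ every zero of $\alpha$ is already a zero of $\beta$, the codimension never drops, and the purity argument stalls---exactly as you observed.

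Your proposed detour through local lengths and multiplicity-one statements along generic points of $Z(B_F^{\textbf a})$ is therefore unnecessary; it is also not supported by anything in the paper and is left incomplete in your outline.
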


\begin{proof}
It is equivalent to prove the result on the analytic side, i.e. $B_{F,0}^{\textbf{a}} = \rad (B_{F,0}^{\textbf{a}})$. By Proposition \ref{prop- nice element BS ideal}, we may find $b(S) \in B_{F, 0}^{\textbf{a}}$ such that $b(S)$ cuts out a reduced hyperplane arrangement. By, say, Theorem \ref{thm- purity implies principality}, $\rad (B_{F,0}^{\textbf{a}})$ is principal. Pick a generator $\gamma(s).$ Write $b(S) = \gamma(S) \frac{b(S)}{\gamma(S)}$. Since $\Variety(b(S))$ and $\Variety(\gamma(s))$ are both reduced hyperplane arrangements, every component of $\Variety(\gamma(S)) \cap \Variety(\frac{b(S)}{\gamma(S)})$ has dimension at most $n-2$.

Now consider 
\[
Q := \gamma(S) \frac{\mathscr{D}_{X,0}[S]F^{S}}{\mathscr{D}_{X,0}[S] F^{S+\textbf{a}}} \subseteq \frac{\mathscr{D}_{X,0}[S]F^{S}}{\mathscr{D}_{X,0}[S] F^{S+\textbf{a}}}.
\]
Just as in Theorem \ref{thm- purity implies principality}, $Q$ inherits $(n+1)$-purity from $\mathscr{D}_{X,0}F^{S} / \mathscr{D}_{X,0}F^{S+\textbf{a}}$, is annihilated by $b(S)/\gamma(s)$, and its Bernstein--Sato ideal satisfies $Z(B_{Q}) \subseteq Z(B_{F,0}^{\textbf{a}}) = Z(\gamma(s))$. On one hand, purity implies $Z(B_{Q})$ is purely codimension one; on the other hand, $Z(B_{Q}) \subseteq Z(\gamma(s)) \cap Z(b(s)/\gamma(s))$, which is, by construction, codimension at least two. We deduce $Q = 0$, $\gamma(s) \in B_{F,0}^{\textbf{a}}$, and $B_{F,0}^{\textbf{a}} = \rad (B_{F,0}^{\textbf{a}})$.
\end{proof}

We have now built up enough machinery to deal with generic arrangements.

\subsection{Generic Arrangements} \text{ }

A central arrangement of $d$ hyperplanes $\mathscr{A} \subseteq \mathbb{C}^{n}$ with $d > n$ is \emph{generic} if the intersection of any collection of $n$ hyperplanes is exactly the origin. By \cite{RoseTeraoResGenericLogForms} every generic arrangement is tame. We let $f$ be a reduced defining equation of $\mathscr{A}$ and we continue to use the notation introduced in Definition \ref{def-intersection lattice etc}. 

In \cite{WaltherGeneric}, Walther obtained the following formula (see \cite{SaitoArrangements} for the multiplicity of $-1$) for the Bernstein--Sato polynomial of a generic arrangement:
\begin{equation} \label{eqn-walther generic b-function}
    B_{f} = \mathbb{C}[s] \cdot (s+1)^{n-1} \prod_{i=0}^{2d - n -2} (s + \frac{i + n}{d}).
\end{equation}

In \cite{MaisonobeGeneric}, Maisonobe proved that if $F = (f_{1}, \dots, f_{d})$ corresponds to a factorization of $f$ into linear forms, then
\begin{equation} \label{eqn-Maisonobe generic BS member}
    \prod_{k=1}^{d} (s_{k} + 1) \prod_{i=0}^{2d - n -2}(\sum_{k=1}^{d} s_{k} + i + n) \in B_{F}^{\textbf{1}}.
\end{equation}
Moreover, if $d = n+1$ he showed that the polynomial in \eqref{eqn-Maisonobe generic BS member} generates $B_{F}^{\textbf{1}}$. However, computing $B_{F}^{\textbf{1}}$ when $d > n+1$ remained unsolved. Using Walther's formula \eqref{eqn-walther generic b-function}, we independently verify \eqref{eqn-Maisonobe generic BS member} and improve upon it by computing this Bernstein--Sato ideal explicitly:

\begin{theorem} \label{thm- generic BS ideal formulae}
Let $f$ be the reduced defining equation of a central, generic hyperplane arrangement in $\mathbb{C}^{n}$ with $d = \deg f > n$. If $F$ corresponds to a factorization of $f$ into irreducibles, then 
\begin{equation} \label{eqn-my generic BS generator}
B_{F}^{\textbf{1}} = \mathbb{C}[S] \cdot \prod_{k=1}^{d} (s_{k} + 1) \prod_{i=0}^{2d - n -2}(\sum_{k=1}^{d} s_{k} + i + n).
\end{equation}
If $F=(f_{1}, \dots, f_{r})$ corresponds to some other factorization, then $B_{F}^{\textbf{1}}$ is principal and
\begin{equation} \label{eqn-my generic BS zero locus}
Z(B_{F}^{\textbf{1}}) = \left(\bigcup_{k=1}^{r} \{ s_{k} + 1 = 0 \}\right) \bigcup \left( \bigcup_{i = 0}^{2d - n - 2} \{ \sum_{k=1}^{r} d_{k}s_{k} + i + n = 0 \} \right).
\end{equation}
\end{theorem}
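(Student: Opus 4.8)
The plan is to first compute $B_{F}^{\textbf{1}}$ for the factorization $F_{\mathrm{lin}} = (l_1, \dots, l_d)$ of $f$ into linear forms, and then recover every other factorization from the diagonal property. Generic arrangements are tame by \cite{RoseTeraoResGenericLogForms}, and hyperplane arrangements are always strongly Euler-homogeneous and Saito-holonomic (Remark \ref{rmk-strongly Euler}, Remark \ref{rmk-Saito-holonomic}), so Corollary \ref{cor- tame implies BS principal} gives that $B_{F}^{\textbf{1}}$ is principal for every factorization $F$, while Theorem \ref{thm- tame, linear factorizations, BS ideal reduced} gives $B_{F_{\mathrm{lin}}}^{\textbf{1}} = \rad(B_{F_{\mathrm{lin}}}^{\textbf{1}})$; thus $B_{F_{\mathrm{lin}}}^{\textbf{1}} = \mathbb{C}[S] \cdot \gamma$ with $\gamma$ squarefree, hence a product of pairwise distinct irreducible polynomials. (Throughout we use that centrality identifies the global $B_{F}^{\textbf{1}}$ with the local $B_{F,0}^{\textbf{1}}$.)

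Next I would determine the indecomposable edges of $\mathscr{A}$, cf.\ Definition \ref{def-intersection lattice etc}. For $2 \le k \le n-1$, genericity forces every rank-$k$ edge $X$ to be the transverse intersection of exactly $k$ hyperplanes, so after a linear change of coordinates $f_X = x_1 \cdots x_k$ inside $\mathbb{C}^{r(X)} = \mathbb{C}^k$ and $X$ is decomposable. Each rank-one edge $X = \Variety(l_i)$ has $J(X) = \{i\}$, $d_X = 1$, and is trivially indecomposable. The unique rank-$n$ edge $X_0 = \{0\}$ has $J(X_0) = [d]$, $d_{X_0} = d$, and is indecomposable: in a nontrivial product decomposition of $\mathscr{A}$ into blocks, since $d > n$ some block with, say, $p$ coordinates carries at least $p+1$ of the hyperplanes, and by genericity those $p+1$ hyperplanes together with enough independent hyperplanes from the other blocks constitute $n$ hyperplanes of $\mathscr{A}$ meeting along a positive-dimensional subspace — a contradiction. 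Feeding this list of indecomposable edges (with $d_{X,k}=1$ if $k \in J(X)$ and $0$ otherwise, since $F_{\mathrm{lin}}$ is the linear factorization) into Corollary \ref{cor- multivariate root bounds} sandwiches $Z(B_{F_{\mathrm{lin}}}^{\textbf{1}})$ between $\bigcup_{i=1}^{d}\{s_i + 1 = 0\} \cup \bigcup_{\ell = 0}^{d-1}\{\sum_{k=1}^{d} s_k + n + \ell = 0\}$ and $\bigcup_{i=1}^{d}\{s_i+1=0\} \cup \bigcup_{\ell=0}^{2d-n-2}\{\sum_{k=1}^{d} s_k + n + \ell = 0\}$. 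As these hyperplanes are pairwise distinct ($d \ge 2$) and $\gamma$ is squarefree, this forces $\gamma = \prod_{i=1}^{d}(s_i+1)\prod_{\ell \in L}(\sum_{k=1}^{d} s_k + n + \ell)$ for some $L$ with $\{0, \dots, d-1\} \subseteq L \subseteq \{0, \dots, 2d-n-2\}$.

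To pin down $L$ exactly I would invoke Theorem \ref{thm-intersecting with diagonal} with the trivial coarser factorization $f = f$, giving $Z(B_{f}) = Z(B_{F_{\mathrm{lin}}}^{\textbf{1}}) \cap \{s_1 = \cdots = s_d\}$. Restricting $\gamma$ to the diagonal $s_1 = \cdots = s_d = s$ produces $(s+1)^{d}\prod_{\ell \in L}(ds + n + \ell)$, whose zero set is $\{-\frac{n+\ell}{d} : \ell \in L\}$ (absorbing $-1 = -\frac{n+(d-n)}{d}$, since $d-n \in \{0,\dots,d-1\} \subseteq L$). By Walther's formula \eqref{eqn-walther generic b-function} this set equals $\{-\frac{n+i}{d} : 0 \le i \le 2d-n-2\}$, and since $\ell \mapsto -\frac{n+\ell}{d}$ is injective on $\mathbb{Z}_{\ge 0}$ we conclude $L = \{0, \dots, 2d-n-2\}$. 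Hence $\gamma$ is exactly the generator in \eqref{eqn-my generic BS generator}; in particular this independently recovers Maisonobe's element \eqref{eqn-Maisonobe generic BS member}, and for $d = n+1$ (where the two sandwiching sets above already coincide) his computation of $B_{F_{\mathrm{lin}}}^{\textbf{1}}$.

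Finally, for an arbitrary factorization $F = (f_1, \dots, f_r)$, writing $f_k = \prod_{i \in S_k} l_i$ exhibits $F$ as a coarser factorization of $F_{\mathrm{lin}}$, and $\Delta_F(\textbf{1}) = \textbf{1}$, so Theorem \ref{thm-intersecting with diagonal} (with fine factorization $F_{\mathrm{lin}}$ and coarse factorization $F$) gives $Z(B_F^{\textbf{1}}) = \Delta_F^{-1}(Z(B_{F_{\mathrm{lin}}}^{\textbf{1}}))$. Since $\Delta_F$ substitutes $s_i \mapsto s_k$ for $i \in S_k$, the hyperplane $\{s_i + 1 = 0\}$ pulls back to $\{s_k + 1 = 0\}$ and $\{\sum_{i=1}^d s_i + n + \ell = 0\}$ pulls back to $\{\sum_{k=1}^r d_k s_k + n + \ell = 0\}$, which is exactly \eqref{eqn-my generic BS zero locus}; principality of $B_F^{\textbf{1}}$ is again Corollary \ref{cor- tame implies BS principal}. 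The main obstacle is the combinatorial edge analysis — especially the indecomposability of the origin when $d > n$ and the Boolean structure of the intermediate edges — together with promoting the Corollary \ref{cor- multivariate root bounds} containments to the exact equality $L = \{0, \dots, 2d-n-2\}$ via comparison with Walther's univariate formula; the remaining steps are bookkeeping with the structural results established earlier in the paper.
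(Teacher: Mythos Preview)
Your proposal is correct and follows essentially the same approach as the paper's proof: use tameness to get principality and reducedness of $B_{F_{\mathrm{lin}}}^{\textbf{1}}$, invoke Corollary \ref{cor- multivariate root bounds} to sandwich the zero locus, compare with Walther's formula via the diagonal property of Theorem \ref{thm-intersecting with diagonal} to determine the remaining factors, and then pull back to arbitrary factorizations. The paper compresses your edge analysis into the single observation that away from the origin $f$ is smooth or normal crossing (hence decomposable), but the logical skeleton is identical.
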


\begin{proof}
First assume $F$ corresponds to a factorization of $f$ into irreducibles. By Theorem \ref{thm- tame, linear factorizations, BS ideal reduced}, $B_{0}^{\textbf{1}}$ is principal and equals its radical. Say it is generated by the polynomial $\gamma(S)$. Since at every point other than the origin, $f$ is either smooth or a normal crossing divisor (and hence decomposable), Corollary \ref{cor- multivariate root bounds} implies
\begin{equation} \label{eqn-generic first estimate gamma}
\gamma(S) = \prod_{k=1}^{d} (s_{k} + 1) \prod_{\ell = 0}^{d - 1} (\sum_{k=1}^{d} s_{k} + \ell + n) \prod_{i \in I} (\sum_{k=1}^{d} s_{k} + i + n),
\end{equation}
where $I \subseteq \{ d, d+1, \dots, 2d - n -2  \}.$

By Theorem \ref{thm-intersecting with diagonal} and \eqref{eqn-generic first estimate gamma} we have, after naturally identifying points on the diagonal of $\mathbb{C}^{r}$ with points in $\mathbb{C}$,
\begin{equation} \label{eqn-generic first estimate bfunction}
Z(B_{f, 0}^{\textbf{1}}) = Z(B_{F,0}^{\textbf{1}}) \cap \{ s_{1} = \cdots = s_{r} \} = \{ - \frac{i + n}{d} \mid 0 \leq i \leq d-1 \text{ or } i \in I \}.
\end{equation}
It is well known that the Bernstein--Sato polynomial of a central arrangement at the origin agrees with its global b-function. So \eqref{eqn-generic first estimate bfunction} should agree with the roots computed by Walther's formula \eqref{eqn-walther generic b-function}. Thus $I = \{d, d+1, \dots, 2d - n -2 \}$ and the explicit description of $\gamma(S)$ verifies the first claim \eqref{eqn-my generic BS generator}.

The statement about principality for other factorizations follows from Corollary \ref{cor- tame implies BS principal}. As for \eqref{eqn-my generic BS zero locus}, use \eqref{eqn-my generic BS generator} and Theorem \ref{thm-intersecting with diagonal}.
\end{proof}

\appendix

\section{Logarithmic Differential Forms for Non-Reduced Divisors}

In K. Saito's original \cite{SaitoLogarithmicForms} logarithmic differential forms are considered only for reduced divisors. As the referee points out, there is no systematic taxonomy of the non-reduced analogue. We record some basic observations here in the setting of $X$ smooth analytic (or $\mathbb{C}$-scheme) of dimension $n$, though similar results hold in the algebraic setting. Throughout $D = \sum n_{i}Z_{i}$, $Z_{i}$ irreducible codimension one components, will be an effective divisor, i.e. $n_{i} > 0$, where we have made the sum finite for notational reasons. We allow $D$ to be possibly non-reduced, i.e. some $n_{i}$ may not equal one. When working with the reduced structure we use $D_{\text{red}}.$ Recall $\mathscr{O}_{X}(D) = \{g \in \mathscr{O}_{X}(\star D) \mid \text{div}(g) + D \geq 0\}$, that is, the sheaf of meromorphic functions with at worst poles of order $n_{i}$ along each $Z_{i}$. And $\Omega_{X}^{k}(D) = \Omega_{X}^{k} \otimes_{\mathscr{O}_{X}} \mathscr{O}_{X}(D)$ is the sheaf of meromorphic $k$-forms with poles of order at most $n_{i}$ along each $Z_{i}$. We denote by $\Omega_{X}^{k}(\star D)$ the meromorphic $k$-forms with poles of arbitrary order on (and only on) the $Z_{i}$.

\begin{define}
The logarithmic differential $k$-forms $\Omega_{X}^{k}(\log D)$ is the subsheaf of $\Omega_{X}^{k}(\star D)$ characterized by
\[
\Omega_{X}^{k}(\log D) = \{ g \in \Omega_{X}^{k}(D) \mid d(g) \in \Omega_{X}^{k+1}(D) \},
\]
where $d$ is the exterior derivative inherited from $\Omega_{X}^{\bullet}(\star D)$. The logarithmic differential forms fit into the logarithmic de Rham complex
\[
\Omega_{X}^{\bullet}(\log D) = 0 \to \Omega_{X}^{0}(\log D) \xrightarrow[]{d} \Omega_{X}^{1}(\log D) \xrightarrow[]{d} \cdots \xrightarrow[]{d} \Omega_{X}^{n}(\log D) \xrightarrow[]{d} 0,
\]
which is well-defined since $\Omega_{X}^{\bullet}(\star D)$ is itself a complex. When using an explicit defining equation $f$ for $D$ on a domain $U \subseteq X$ we often replace ``$\log D$'' with ``$\log f$''.
\end{define}

\begin{proposition} \label{prop - appendix, first property log forms}
Suppose $f$ defines $D$ on an open domain $U \subseteq X$. For $\eta \in \Omega_{X}^{k}(\star D)$, the following are equivalent:
\begin{enumerate}
    \item $\eta \in \Omega_{U}^{k}(\log D)$;
    \item $f \eta \in \Omega_{U}^{k}$ and $f d(\eta) \in \Omega_{U}^{k+1}$;
    \item $f \eta \in \Omega_{U}^{k}$ and $df \wedge \eta \in \Omega_{U}^{k}$.
\end{enumerate}
\begin{proof}
This is immediate from the formula $d(f \eta) = d(f) \wedge \eta + f d(\eta)$.
\end{proof}
\end{proposition}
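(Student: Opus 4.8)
The plan is to derive all three equivalences from a single ingredient, the Leibniz rule $d(f\eta) = df \wedge \eta + f\,d(\eta)$, once the defining condition of $\Omega_U^k(\log D)$ has been rephrased in terms of holomorphicity after multiplication by $f$.

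First I would record the elementary translation: since $f$ is a local defining equation of $D$ on $U$, the invertible $\mathscr{O}_U$-module $\mathscr{O}_U(D)$ is generated by $f^{-1}$, so for $\eta \in \Omega_U^k(\star D)$ one has $\eta \in \Omega_U^k(D)$ if and only if $f\eta \in \Omega_U^k$. Applying this to $\eta$ and, separately, to $d(\eta)$ converts the definition $\Omega_U^k(\log D) = \{\eta \in \Omega_U^k(D) : d(\eta) \in \Omega_U^{k+1}(D)\}$ verbatim into condition (2). Hence (1) $\Leftrightarrow$ (2) is just unwinding notation; the only point to be slightly careful about is that this identification uses the chosen defining equation $f$, although it is valid for any defining equation, reduced or not.

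For the equivalence of (2) and (3) I would first observe that both conditions carry the common hypothesis $f\eta \in \Omega_U^k$, so I may assume it; then $f\eta$ is a genuine holomorphic $k$-form and $d(f\eta)$ is a holomorphic $(k+1)$-form on $U$. Rewriting the product rule (valid term-by-term inside the complex $\Omega_U^\bullet(\star D)$) as $f\,d(\eta) = d(f\eta) - df \wedge \eta$, I see that $f\,d(\eta)$ and $df\wedge\eta$ differ by the holomorphic form $d(f\eta)$; therefore one is holomorphic precisely when the other is, which is exactly the remaining content distinguishing (2) from (3).

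I do not expect a genuine obstacle here: the statement is a two-step bookkeeping argument joined by the Leibniz identity. The only mild subtleties are the module-theoretic identification $\Omega_U^k(D) = \{\eta : f\eta \in \Omega_U^k\}$ mentioned above, and keeping track that all wedge products and exterior derivatives live a priori in $\Omega_U^\bullet(\star D)$, so that the content of each condition is precisely the assertion that a specific such form is actually holomorphic.
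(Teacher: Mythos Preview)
Your proof is correct and follows exactly the paper's approach: the paper's proof is the single sentence ``This is immediate from the formula $d(f\eta)=d(f)\wedge\eta+f\,d(\eta)$,'' and you have simply unpacked that sentence by first identifying $\Omega_U^k(D)=\{\eta:f\eta\in\Omega_U^k\}$ to get (1)$\Leftrightarrow$(2), then using the Leibniz rule to get (2)$\Leftrightarrow$(3).
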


Proposition \ref{prop - appendix, first property log forms} tracks with the first half of (1.1) of \cite{SaitoLogarithmicForms}, but the rest of (1.1) seems to require $D = D_{\text{red}}$. The explicit construction of $g$ in (iii) of loc. cit. amounts to choosing $g = \partial f/ \partial x_{j}$ for some $j$ such that $\{f = 0\} \cap \{ \partial f/ \partial x_{j} = 0\}$ has codimension at least $2$. When $D \neq D_{\text{red}}$ this may not be possible: $\codim \text{Sing}(D) \geq 2$ may fail. For instance, if $f = x^{2} \in \mathscr{O}_{\mathbb{C}^{2}}$ we have
\[
\{f = 0\} \cap \{ \partial f/ \partial x = 0\} = \{f = 0\}
\]
and $\codim \{f = 0\} = 1$.

An unfortunate reality of the inability to generalize the proof of the equivalence of (iii) and (iv) in (1.1) of \cite{SaitoLogarithmicForms} to (2) and (3) of Proposition \ref{prop - appendix, first property log forms} in the non-reduced setting, is that when $D$ is non-reduced, the logarithmic differential forms along $D$ may not be (and almost never are) closed under exterior product.

\begin{example} \label{ex - appendix, not nec exterior algebra} (Not necessarily an exterior algebra)
Let $f = x^{2}y^{2}$ define our divisor $D \subseteq X=\mathbb{C}^{2}$. Then $(1/f)(a dx + b dy)$ is in $\Omega_{X}^{1}(\log f)$ precisely when 
\[
(-(2x^{2} y dy)a + (2x y^{2}) b) dxdy \in f \cdot \Omega_{X}^{2},
\]
that is, when
\[
2xy(by - ax) \in x^{2}y^{2} \cdot \mathscr{O}_{X}.
\]
This happens if and only if $b \in x \cdot \mathscr{O}_{X}$ and $a \in y \cdot \mathscr{O}_{X}$ meaning
\begin{equation} \label{eqn - appendix, explicit log 1 forms in example}
\Omega_{X}^{1}(\log f) = \mathscr{O}_{X} \cdot \frac{dx}{x^{2}y} \oplus \mathscr{O}_{X} \cdot \frac{dy}{xy^{2}}.
\end{equation}
Therefore
\[
\wedge^{2} \Omega_{X}^{1}(\log f) = \frac{1}{x^{3}y^{3}} \Omega_{X}^{2} = \frac{1}{f_{\red} f} \Omega_{X}^{2} \supsetneq \frac{1}{f} \Omega_{X}^{2} = \Omega_{X}^{2}(\log f),
\]
demonstrating that the logarithmic differential forms along a non-reduced divisor may not be an exterior algebra.
\end{example}

Recall the definition of the logarithmic derivations along $D$:
\[
\Der_{X}(- \log D) = \{\delta \in \Der_{X} \mid \delta \bullet \mathscr{O}_{X}(-D) \subseteq \mathscr{O}_{X}(-D)\},
\]
i.e. these are vector fields that are tangent to $D$. It is well known (and follows from the product rule) that $\Der_{X}(-\log D) = \cap_{Z_{i}} \Der_{X}(-\log Z_{i})$ and hence 
\[
\Der_{X}(-\log D) = \Der_{X}(-\log D_{\red}).
\]
In the reduced case, contraction of a logarithmic $k$-form along a logarithmic derivation gives a logarithmic $(k-1)$-form. The same is true in the non-reduced situation:

\begin{proposition} \label{prop - appendix, contraction}
Contracting along $\chi \in \Der_{X}(-\log D) = \Der_{X}(-\log D_{\red})$ induces an $\mathscr{O}_{X}$-map
\[
\iota_{\chi}: \Omega_{X}^{k}(\log D) \to \Omega_{X}^{k-1}(\log D).
\]
\end{proposition}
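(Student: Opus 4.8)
The plan is to reduce the non-reduced statement to the known reduced case via Proposition \ref{prop - appendix relating log forms along D to reduced D}, or, failing that, to argue directly from the characterization in Proposition \ref{prop - appendix, first property log forms}. Let me sketch the direct approach, since it is self-contained and the reduced-case reduction is essentially the same computation. Work on an open domain $U \subseteq X$ where $D$ is cut out by $f$, and fix $\omega \in \Omega_{U}^{k}(\log D)$ and $\chi \in \Der_{U}(-\log D)$. By Proposition \ref{prop - appendix, first property log forms}, it suffices to verify two things about $\eta := \iota_{\chi}(\omega)$: first, that $f \eta \in \Omega_{U}^{k-1}$, and second, that $df \wedge \eta \in \Omega_{U}^{k-1}$ (equivalently $f\, d(\eta) \in \Omega_U^k$).

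The first condition is immediate: contraction is $\mathscr{O}_{U}$-linear in the form argument, so $f\eta = f\,\iota_{\chi}(\omega) = \iota_{\chi}(f\omega)$, and $f\omega \in \Omega_{U}^{k}$ by Proposition \ref{prop - appendix, first property log forms}(2), hence $\iota_\chi(f\omega) \in \Omega_U^{k-1}$ since contracting a holomorphic form along a holomorphic vector field yields a holomorphic form. For the second condition, the key identity is the graded Leibniz rule for contraction against a wedge:
\[
\iota_{\chi}(df \wedge \omega) = (\iota_{\chi}\, df)\,\omega - df \wedge \iota_{\chi}(\omega) = (\chi \bullet f)\,\omega - df \wedge \eta.
\]
Rearranging gives $df \wedge \eta = (\chi \bullet f)\,\omega - \iota_{\chi}(df \wedge \omega)$. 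Now $\chi \in \Der_U(-\log D) = \Der_U(-\log f)$ means $\chi \bullet f = c f$ for some $c \in \mathscr{O}_{U}$ (after possibly rescaling the defining equation, or directly: $\chi$ preserves the ideal $(f)$, with $f$ reduced or not, since $\Der_X(-\log D) = \Der_X(-\log D_{\mathrm{red}})$ and one checks $\chi \bullet f \in (f)$ by the product rule applied to the factorization of $f$). Thus $(\chi \bullet f)\,\omega = c\,f\,\omega \in \mathscr{O}_U \cdot \Omega_U^k \subseteq \Omega_U^k$ since $f\omega \in \Omega_U^k$. For the other term, $df \wedge \omega \in \Omega_U^{k+1}$ by Proposition \ref{prop - appendix, first property log forms}(3), so $\iota_{\chi}(df \wedge \omega) \in \Omega_U^{k}$, again because contracting a holomorphic form against a holomorphic field stays holomorphic. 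Hence $df \wedge \eta \in \Omega_U^{k}$, and therefore $\eta \in \Omega_U^{k-1}(\log D)$ by Proposition \ref{prop - appendix, first property log forms}. Finally one notes the construction is independent of the choice of local equation $f$ and patches to a global $\mathscr{O}_X$-linear map, and $\mathscr{O}_X$-linearity is clear from $\mathscr{O}_X$-linearity of $\iota_\chi$ in the form slot.

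The only genuinely delicate point — and the one I would highlight as the main obstacle in the non-reduced setting — is establishing $\chi \bullet f \in (f)$ without any reducedness or smoothness hypothesis; in the reduced case Saito gets this for free, but here one must invoke $\Der_X(-\log D) = \Der_X(-\log D_{\mathrm{red}})$ together with the product-rule computation $\chi \bullet f = \sum_i n_i (\chi \bullet \ell_i)(f/\ell_i)$ for a factorization $f = \prod \ell_i^{n_i}$ into irreducibles, using that $\chi \bullet \ell_i \in (\ell_i)$ for each $i$. Everything else is formal multilinear algebra of contraction and the wedge, exactly as in the reduced case, so I do not expect to grind through it in detail.
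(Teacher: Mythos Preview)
Your proof is correct and follows essentially the same route as the paper's: work locally, use the characterization in Proposition \ref{prop - appendix, first property log forms}, and apply the graded Leibniz rule $\iota_{\chi}(df \wedge \omega) = (\iota_{\chi}\,df)\,\omega - df \wedge \iota_{\chi}(\omega)$ to reduce the second condition to two holomorphic terms. One small remark: the point you flag as ``genuinely delicate'' --- that $\chi \bullet f \in (f)$ --- is in fact immediate from the paper's definition $\Der_{X}(-\log D) = \{\delta \in \Der_{X} \mid \delta \bullet \mathscr{O}_{X}(-D) \subseteq \mathscr{O}_{X}(-D)\}$, so no factorization argument is needed there.
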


\begin{proof}
Let $\eta \in \Omega_{U}^{k}(\log D)$ and let $f$ be a defining equation for $D$ along a domain $U$. We use Proposition \ref{prop - appendix, first property log forms}. Certainly $f \iota_{\chi}(\eta) \in \Omega_{U}^{k-1}$ since contraction is $\mathscr{O}_{X}$-linear. We must show $df \wedge \iota_{\chi}(\eta) \in \Omega_{U}^{k}$. Observe:
\[
\iota_{\chi}(df \wedge \eta) = \iota_{\chi}(df) \wedge \eta - df \wedge \iota_{\chi}(\eta).
\]
By Proposition \ref{prop - appendix, first property log forms}, $\iota_{\chi}(df \wedge \eta) \in \Omega_{U}^{k}$; since $\chi \in \Der_{U}(-\log D)$, $\iota_{\chi}(df) \wedge \eta \in \Omega_{U}^{k}.$
\end{proof}

In the non-reduced setting, the logarithmic derivations need not be (and almost never are) the $\mathscr{O}_{X}$-dual of $\Omega_{X}^{1}(\log D)$:

\begin{example} \label{ex - appendix, failure of duality} (Duality differs for non-reduced divisors)
Take $f = x^{2}y^{2}$ and $D = \text{Var}(f) \subseteq X = \mathbb{C}^{2}$ as in Example \ref{ex - appendix, not nec exterior algebra}. We have $\Der_{X}(-\log D) = \Der_{X}(-\log D_{\red})$ is freely generated by $x \partial x$ and $y \partial y$. Using the explicit description of $\Omega_{X}^{1}(\log D)$ from \eqref{eqn - appendix, explicit log 1 forms in example} one can check contracting along $\Der_{X}(-\log D)$ gives a bilinear $\mathscr{O}_{X}$-map
\[
\Omega_{X}^{1}(\log D) \times \Der_{X}(-\log D) \to \mathscr{O}_{X}(D_{\red}).
\]
So $\Der_{X}(-\log D) \neq \Hom_{\mathscr{O}_{X}}(\Omega_{X}^{1}(\log D), \mathscr{O}_{X})$ in this non-reduced case.
\end{example}

Despite Example \ref{ex - appendix, not nec exterior algebra} and Example \ref{ex - appendix, failure of duality}, at least as $\mathscr{O}_{X}$-modules, $\Omega_{X}^{k}(\log D)$ and $\Omega_{X}^{k}(\log D_{\red})$ are closely linked and essentially the same up to isomorphism, thanks to:

\begin{proposition} \label{prop - appendix relating log forms along D to reduced D}
We have the $\mathscr{O}_{X}$-module equality
\[
\Omega_{X}^{k}(\log D) = \Omega_{X}^{k}(\log D_{\red}) \otimes_{\mathscr{O}_{X}} \mathscr{O}_{X}(D - D_{\red}),
\]
under the canonical identification $\mathscr{O}_{X}(D) \otimes_{\mathscr{O}_{X}} \mathscr{O}_{X}(-D) = \mathscr{O}_{X}.$
\end{proposition}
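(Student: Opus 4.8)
The plan is to check the asserted equality stalk-wise: both $\Omega_X^k(\log D)$ and $\Omega_X^k(\log D_{\text{red}})\otimes_{\mathscr{O}_X}\mathscr{O}_X(D-D_{\text{red}})$ are defined as subsheaves of $\Omega_X^k(\star D)$ (the latter via the canonical identification $\mathscr{O}_X(D)\otimes_{\mathscr{O}_X}\mathscr{O}_X(-D)=\mathscr{O}_X$), so a local equality automatically patches. Fix $x\in X$. Since $\mathscr{O}_{X,x}$ is a regular local ring it is a UFD, so a local defining equation $f$ of $D$ near $x$ factors as $f=\ell_1^{\lambda_1}\cdots\ell_r^{\lambda_r}$ with the $\ell_i$ pairwise non-associate primes and $\lambda_i=n_i\ge 1$. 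Then $f_{\text{red}}=\ell_1\cdots\ell_r$ defines $D_{\text{red}}$ and $g:=f/f_{\text{red}}=\ell_1^{\lambda_1-1}\cdots\ell_r^{\lambda_r-1}$ defines the effective divisor $D-D_{\text{red}}$, so that $\mathscr{O}_{X,x}(D-D_{\text{red}})=\tfrac1g\mathscr{O}_{X,x}$ and the claimed identity becomes the assertion $\zeta\in\Omega_{X,x}^k(\log f)\iff g\zeta\in\Omega_{X,x}^k(\log f_{\text{red}})$.

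Next I would reduce to a statement about honest (regular) $k$-forms. Writing $\zeta=\eta/f$ with $\eta\in\Omega_{X,x}^k(\star D)$, Proposition \ref{prop - appendix, first property log forms} says $\zeta\in\Omega_{X,x}^k(\log f)$ iff $\eta\in\Omega_{X,x}^k$ and $df\wedge\eta\in f\,\Omega_{X,x}^{k+1}$; since $g\zeta=\eta/f_{\text{red}}$, likewise $g\zeta\in\Omega_{X,x}^k(\log f_{\text{red}})$ iff $\eta\in\Omega_{X,x}^k$ and $d(f_{\text{red}})\wedge\eta\in f_{\text{red}}\,\Omega_{X,x}^{k+1}$. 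Thus it suffices to prove the key lemma: for every $\eta\in\Omega_{X,x}^k$ one has $df\wedge\eta\in f\,\Omega_{X,x}^{k+1}$ if and only if $d(f_{\text{red}})\wedge\eta\in f_{\text{red}}\,\Omega_{X,x}^{k+1}$. This is essentially the computation indicated in Remark \ref{rmk-log ddifferential forms}, carried out carefully over the free $\mathscr{O}_{X,x}$-module $\Omega_{X,x}^{k+1}$.

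To prove the lemma I would argue prime by prime. Because $\Omega_{X,x}^{k+1}$ is free and the $\ell_i^{\lambda_i}$ are pairwise coprime, $f\,\Omega_{X,x}^{k+1}=\bigcap_i\ell_i^{\lambda_i}\Omega_{X,x}^{k+1}$ and $f_{\text{red}}\,\Omega_{X,x}^{k+1}=\bigcap_i\ell_i\,\Omega_{X,x}^{k+1}$. By the Leibniz rule $df\wedge\eta=\sum_j\lambda_j\,\tfrac{f}{\ell_j}\,d(\ell_j)\wedge\eta$, and for $j\ne i$ the coefficient $\tfrac{f}{\ell_j}$ is divisible by $\ell_i^{\lambda_i}$, so modulo $\ell_i^{\lambda_i}\Omega_{X,x}^{k+1}$ only the $j=i$ term survives. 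Writing $\tfrac{f}{\ell_i}=\ell_i^{\lambda_i-1}u_i$ with $u_i:=\prod_{j\ne i}\ell_j^{\lambda_j}$ coprime to $\ell_i$, and using that $\Omega_{X,x}^{k+1}$ is free over the domain $\mathscr{O}_{X,x}$ (to cancel $\ell_i^{\lambda_i-1}$), that $\lambda_i$ is a nonzero scalar, and that the class of $u_i$ is a nonzerodivisor on the free $\mathscr{O}_{X,x}/(\ell_i)$-module $\Omega_{X,x}^{k+1}/\ell_i\Omega_{X,x}^{k+1}$, one gets $df\wedge\eta\in\ell_i^{\lambda_i}\Omega_{X,x}^{k+1}\iff d(\ell_i)\wedge\eta\in\ell_i\,\Omega_{X,x}^{k+1}$. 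The identical computation applied to $f_{\text{red}}$ yields $d(f_{\text{red}})\wedge\eta\in\ell_i\,\Omega_{X,x}^{k+1}\iff d(\ell_i)\wedge\eta\in\ell_i\,\Omega_{X,x}^{k+1}$. Intersecting over all $i$ proves the lemma, hence the proposition, and the local identifications patch since both sides sit inside $\Omega_X^k(\star D)$.

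I do not expect a genuine obstacle here: the content is entirely the divisibility and coprimality bookkeeping in the key lemma. The points to state carefully are the primary decomposition $f\,\Omega_{X,x}^{k+1}=\bigcap_i\ell_i^{\lambda_i}\Omega_{X,x}^{k+1}$ (which uses local freeness of $\Omega_X^{k+1}$ together with $\mathscr{O}_{X,x}$ being a UFD) and the cancellation of the powers $\ell_i^{\lambda_i-1}$ and of the coprime factor $u_i$; once these are in place the equivalence is immediate.
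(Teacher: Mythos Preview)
Your proposal is correct and follows essentially the same route as the paper: both arguments reduce the equality to the claim that for $\eta\in\Omega_{X,x}^{k}$ one has $df\wedge\eta\in f\,\Omega_{X,x}^{k+1}$ if and only if $d(f_{\text{red}})\wedge\eta\in f_{\text{red}}\,\Omega_{X,x}^{k+1}$, and both verify this by splitting the condition up over the irreducible components $Z_i$ and observing that the multiplicities $n_i$ drop out. The only difference is presentational: the paper phrases the prime-by-prime reduction in sheaf-theoretic language (working with $\mathscr{O}_X(-Z_i)$, $\mathscr{O}_X(-n_iZ_i)$ and twisting down by $D$), whereas you carry it out explicitly with a local prime factorization $f=\ell_1^{\lambda_1}\cdots\ell_r^{\lambda_r}$ and direct divisibility bookkeeping.
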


\begin{proof}
It suffices to show that 
\begin{equation} \label{eqn - appendix, sufficient criterion for relating reduced log to non-reduced log after a twist}
\Omega_{X}^{k}(\log D) \otimes_{\mathscr{O}_{X}} \mathscr{O}_{X}(-D) = \Omega_{X}^{k}(\log D_{\red}) \otimes_{\mathscr{O}_{X}} \mathscr{O}_{X}(-D_{\red}).
\end{equation}
Write $D = \sum n_{i}Z_{i}$ as before. Proposition \ref{prop - appendix, first property log forms} implies that $\mu \in \Omega_{X}^{k}(\log D)$ if and only if (1) $\mu \in \Omega_{X}^{k}(D)$ and (2) $dg \wedge \mu \in \Omega_{X}^{k+1}$ for all $g \in \Omega_{X}^{0}(-D)$. But (2) is equivalent, by the product rule, to $dg \wedge \mu \in \Omega_{X}^{k+1}(D - n_{i}Z_{i})$ for all $g \in \Omega_{X}^{0}(-n_{i}Z_{i})$ and for all $Z_{i}$. And this itself is equivalent, again by the product rule, to $dg \wedge \mu \in \Omega_{X}^{k+1}(D - Z_{i})$ for all $g \in \Omega_{X}^{0}(-Z_{i})$ and for all $Z_{i}.$ Therefore
\begin{align} \label{eqn - appendix, criterion for membership in twisted log k forms}
\omega \in \Omega_{X}^{k}(\log D) \otimes_{\mathscr{O}_{X}} \mathscr{O}_{X}(-D) \iff 
    & \omega \in \Omega_{X}^{k} \text{ and } \\
    &dg \wedge \omega \in \Omega_{X}^{k+1}(-Z_{i}) \enspace \forall g \in \Omega_{X}^{0}(-Z_{i}), \enspace \forall Z_{i}. \nonumber
\end{align}

Note that \eqref{eqn - appendix, criterion for membership in twisted log k forms} also applies for $D_{\red}$. In particular, the second condition (after ``and'') is the same. Therefore $\omega \in \Omega_{X}^{k}$ satisfies $\omega \in \Omega_{X}^{k}(\log D) \otimes_{\mathscr{O}_{X}} \mathscr{O}_{X}(-D)$ if and only if $\omega \in \Omega_{X}^{k}(\log D_{\red}) \otimes_{\mathscr{O}_{X}} \mathscr{O}_{X}(-D_{\red})$, which verifies \eqref{eqn - appendix, sufficient criterion for relating reduced log to non-reduced log after a twist}.
\end{proof}

\begin{corollary} \label{cor - appendix, listing similar properties for non-reduced}
Let $D$ be a possibly non-reduced divisor and $D_{\red}$ the associated reduced divisor. The following are true:
\begin{enumerate}
    \item $\Omega_{X}^{0}(\log D) = \Omega_{X}^{0} \otimes_{\mathscr{O}_{X}} \mathscr{O}_{X}(D - D_{\red}) = \mathscr{O}_{X}(D - D_{\red});$
    \item $\Omega_{X}^{n-1}(\log D) \simeq \Der_{X}(-\log D)$;
    \item $\Omega_{X}^{k}(\log D)$ is reflexive;
    \item $\Omega_{X}^{k}(\log D)$ is free if and only if $\Omega_{X}^{k}(\log D_{\red})$ is free;
    \item $\Omega_{X}^{k}(\log D)$ is tame if and only if $\Omega_{X}^{k}(\log D_{\red})$ is tame.
\end{enumerate}
\end{corollary}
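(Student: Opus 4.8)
The plan is to deduce all five statements from Proposition \ref{prop - appendix relating log forms along D to reduced D}, which identifies $\Omega_X^k(\log D)$ with $\Omega_X^k(\log D_{\red}) \otimes_{\mathscr{O}_X} \mathscr{O}_X(D - D_{\red})$, together with the equality $\Der_X(-\log D) = \Der_X(-\log D_{\red})$ recorded before Proposition \ref{prop - appendix, contraction} and the classical theory of \cite{SaitoLogarithmicForms} for the reduced divisor $D_{\red}$. The one structural input used repeatedly is that $\mathscr{O}_X(D - D_{\red})$ is invertible, hence locally free of rank one; in particular at every stalk $\mathscr{O}_{X,\xpoint}(D - D_{\red})$ is a free $\mathscr{O}_{X,\xpoint}$-module of rank one, so $\Omega_{X,\xpoint}^k(\log D) \cong \Omega_{X,\xpoint}^k(\log D_{\red})$ as $\mathscr{O}_{X,\xpoint}$-modules.

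For (1), I would use that a meromorphic function with at worst a logarithmic pole along a reduced divisor is holomorphic, i.e. $\Omega_X^0(\log D_{\red}) = \Omega_X^0 = \mathscr{O}_X$; this is the standard consequence of $\codim_X \operatorname{Sing}(D_{\red}) \ge 2$ in characteristic zero, and can also be read off from Proposition \ref{prop - appendix, first property log forms}(3) using a reduced local equation. Substituting into Proposition \ref{prop - appendix relating log forms along D to reduced D} gives $\Omega_X^0(\log D) = \mathscr{O}_X \otimes_{\mathscr{O}_X} \mathscr{O}_X(D - D_{\red}) = \mathscr{O}_X(D - D_{\red})$. For (2), the classical duality for reduced divisors gives $\Omega_X^{n-1}(\log D_{\red}) \simeq \Der_X(-\log D_{\red}) \otimes_{\mathscr{O}_X} \Omega_X^n(D_{\red})$: the wedge pairing $\Omega_X^1(\log D_{\red}) \times \Omega_X^{n-1}(\log D_{\red}) \to \Omega_X^n(\log D_{\red}) = \Omega_X^n(D_{\red})$ is perfect after passing to double duals (both sides reflexive of rank $n$, the pairing being perfect along the smooth locus of $D_{\red}$), equivalently one contracts the local generator of $\Omega_X^n(\log D_{\red}) = \Omega_X^n(D_{\red})$ as in Proposition \ref{prop - appendix, contraction}; combining this with Proposition \ref{prop - appendix relating log forms along D to reduced D} and $\Der_X(-\log D) = \Der_X(-\log D_{\red})$ yields $\Omega_X^{n-1}(\log D) \simeq \Der_X(-\log D) \otimes_{\mathscr{O}_X} \Omega_X^n(D)$, which is $\Der_X(-\log D)$ at every stalk since $\Omega_X^n(D)$ is invertible.

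For (3), (4), (5) the argument is purely homological over $\mathscr{O}_{X,\xpoint}$: the isomorphism $\Omega_{X,\xpoint}^k(\log D) \cong \Omega_{X,\xpoint}^k(\log D_{\red})$ from the first paragraph shows the two modules have the same projective dimension, giving (5) since tameness at $\xpoint$ is a projective-dimension bound (Definition \ref{def-log forms}), and shows one is free exactly when the other is, giving (4). For (3), reflexivity is inherited from the reduced case: $\Omega_X^k(\log D_{\red})$ is reflexive by \cite{SaitoLogarithmicForms} (cf. 1.7 therein), and a reflexive sheaf tensored with an invertible sheaf is reflexive; alternatively, one observes directly that (locally, for a defining equation $f$) $\Omega_X^k(\log D)$ is the fibre product of the locally free sheaves $\Omega_X^k(D)$ and $\Omega_X^{k+1}$ over $\Omega_X^{k+1}(D)$ along $\eta \mapsto df \wedge \eta$ by Proposition \ref{prop - appendix, first property log forms}(3), hence a second syzygy sheaf, hence reflexive on the regular space $X$. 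The only genuinely delicate point is (2): the twist by $\Omega_X^n(D)$ is invisible at stalks but present globally, and the reduced-case isomorphism $\Omega_X^{n-1}(\log D_{\red}) \simeq \Der_X(-\log D_{\red})$ really does use reflexivity of the reduced logarithmic forms and vector fields — as Example \ref{ex - appendix, failure of duality} shows, the naive analogue of this duality fails outright for non-reduced $D$ — so this step must be imported from \cite{SaitoLogarithmicForms} rather than re-derived from the appendix's own lemmas.
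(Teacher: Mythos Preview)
Your proposal is correct and follows essentially the same line as the paper: deduce everything from Proposition~\ref{prop - appendix relating log forms along D to reduced D} together with the known reduced case and the invertibility of $\mathscr{O}_X(D-D_{\red})$. Two minor divergences are worth noting. For (2), the paper does not pass through the reduced case at all but instead invokes the explicit contraction map from Remark~2.4(5) of \cite{uli}, which works directly in the non-reduced setting; your route via $\Omega_X^{n-1}(\log D_{\red})\simeq \Der_X(-\log D_{\red})\otimes\Omega_X^n(D_{\red})$ is fine but, as you yourself flag, only yields the isomorphism stalkwise (the twist by $\Omega_X^n(D)$ need not be globally trivial), whereas an explicit local map sidesteps this. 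For (3), Saito's (1.7) in \cite{SaitoLogarithmicForms} treats only $k=1$; for general $k$ the paper appeals to Proposition~2.2 of \cite{DenhamSchulzeChern}. Your alternative second-syzygy argument via Proposition~\ref{prop - appendix, first property log forms}(3) is a clean way to avoid that citation altogether.
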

\begin{proof}
Because $\Omega_{X}^{0}(\log D_{\red}) = \Omega_{X}^{0} = \mathscr{O}_{X}$, Proposition \ref{prop - appendix relating log forms along D to reduced D} implies (1). The justification for (2) is already known in the non-reduced and reduced cases, and is given by a straightforward extension of the explicit map in Remark 2.4 (5) of \cite{uli}.  (4) and (5) follow from the $\mathscr{O}_{X}$-module isomorphism $\Omega_{X}^{k}(\log D) \simeq \Omega_{X}^{k}(\log D_{\red})$ given by Proposition \ref{prop - appendix relating log forms along D to reduced D}. (3) follows similarly, as the reflexivity of $\Omega_{X}^{k}(\log D_{\red})$ was proved for $k=1$ in (1.6) of \cite{SaitoLogarithmicForms} and $k \geq 1$ by Proposition 2.2 of \cite{DenhamSchulzeChern}.
\end{proof}

We finish with the appropriate generalization of the perfect pairing between $\Omega_{X}^{1}(\log D_{\red})$ and $\Der_{X}(-\log D_{\red})$ to the non-reduced setting. We augment a treatment in \cite{MondOnlineLogarithmicNotes}.

\begin{proposition} \label{prop - appendix, perfect pairing}
Contraction along vector fields induces the perfect pairing
\[
\Der_{X}(-\log D) \otimes_{\mathscr{O}_{X}} \mathscr{O}_{X}(D_{\red} - D) \times \Omega_{X}^{1}(\log D) \ni (\chi, \eta) \mapsto \iota_{\chi}(\eta) \in \mathscr{O}_{X}.
\]
This naturally identifies $\Der_{X}(-\log D) \otimes_{\mathscr{O}_{X}} \mathscr{O}_{X}(D_{\red} - D)$ and $\Omega_{X}^{1}(\log D)$ with each other's $\mathscr{O}_{X}$-duals.
\end{proposition}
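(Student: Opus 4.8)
The plan is to bootstrap from the classical perfect pairing in the reduced case by twisting with the invertible sheaf $\mathscr{O}_{X}(D - D_{\red})$. Two inputs are already at hand: $\Der_{X}(-\log D) = \Der_{X}(-\log D_{\red})$ (recorded just before Proposition \ref{prop - appendix, contraction}), and the $\mathscr{O}_{X}$-module identification $\Omega_{X}^{1}(\log D) = \Omega_{X}^{1}(\log D_{\red}) \otimes_{\mathscr{O}_{X}} \mathscr{O}_{X}(D - D_{\red})$ from Proposition \ref{prop - appendix relating log forms along D to reduced D}. In the reduced setting it is classical that contraction along vector fields
\[
\Der_{X}(-\log D_{\red}) \times \Omega_{X}^{1}(\log D_{\red}) \ni (\chi, \omega) \mapsto \iota_{\chi}(\omega) \in \mathscr{O}_{X}
\]
is a perfect $\mathscr{O}_{X}$-pairing, i.e. both adjoints into $\Hom_{\mathscr{O}_{X}}(-,\mathscr{O}_{X})$ are isomorphisms; this rests on reflexivity of $\Omega_{X}^{1}(\log D_{\red})$, cf. Corollary \ref{cor - appendix, listing similar properties for non-reduced}.(3), and is classical (cf. \cite{SaitoLogarithmicForms}, \cite{MondOnlineLogarithmicNotes}).

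Since perfectness of a pairing is local, I would fix a domain $U$ and defining equations $f$ of $D$ and $f_{\red}$ of $D_{\red}$ with $f_{\red} \mid f$, and set $h := f/f_{\red} \in \mathscr{O}_{U}$, so that $\mathscr{O}_{U}(D - D_{\red}) = \tfrac{1}{h}\mathscr{O}_{U}$, $\mathscr{O}_{U}(D_{\red} - D) = h\,\mathscr{O}_{U}$, and the canonical isomorphism $\mathscr{O}_{U}(D_{\red}-D) \otimes_{\mathscr{O}_{U}} \mathscr{O}_{U}(D-D_{\red}) \xrightarrow{\sim} \mathscr{O}_{U}$ is multiplication. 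By Proposition \ref{prop - appendix relating log forms along D to reduced D} we then have $\Omega_{U}^{1}(\log D) = \tfrac{1}{h}\Omega_{U}^{1}(\log D_{\red})$, while a local section of $\Der_{U}(-\log D) \otimes \mathscr{O}_{U}(D_{\red}-D)$ is represented by $h\chi$ with $\chi \in \Der_{U}(-\log D_{\red})$. For $\eta = \tfrac{1}{h}\omega$, $\omega \in \Omega_{U}^{1}(\log D_{\red})$, the $\mathscr{O}_{X}$-linearity of contraction gives $\iota_{h\chi}(\tfrac1h\omega) = \iota_{\chi}(\omega) \in \mathscr{O}_{U}$; in particular the pairing does land in $\mathscr{O}_{X}$ (note that Proposition \ref{prop - appendix, contraction} a priori only gives $\iota_{\chi}(\eta) \in \Omega_{X}^{0}(\log D) = \mathscr{O}_{X}(D-D_{\red})$ before the twist). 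Thus the $\mathscr{O}_{U}$-isomorphisms ``multiply by $h$'' on the derivation side and ``divide by $h$'' on the form side carry the pairing of the Proposition onto the classical reduced pairing on $\Der_{U}(-\log D_{\red}) \times \Omega_{U}^{1}(\log D_{\red})$, so it is perfect on $U$; being defined globally, it is perfect on $X$, and the asserted identification of $\Der_{X}(-\log D) \otimes \mathscr{O}_{X}(D_{\red}-D)$ and $\Omega_{X}^{1}(\log D)$ with each other's $\mathscr{O}_{X}$-duals is then the formal consequence.

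The one point requiring genuine care — and the expected main obstacle — is the compatibility of the various canonical identifications, namely that the local perfect pairings above are independent of the chosen equations $f, f_{\red}$ and hence patch to the intrinsically defined global contraction pairing; replacing $f$ by $uf$ for a unit $u$ rescales $h$ by a unit and correspondingly rescales the identifications $\mathscr{O}_{U}(D-D_{\red}) = \tfrac1h\mathscr{O}_{U}$ and $\mathscr{O}_{U}(D_{\red}-D) = h\,\mathscr{O}_{U}$, leaving $(\chi,\eta)\mapsto\iota_{\chi}(\eta)$ untouched. Alternatively the argument can be run entirely sheaf-theoretically: tensoring a perfect pairing $M \times N \to \mathscr{O}_{X}$ of reflexive sheaves by an invertible sheaf $\mathcal{L}$ on one factor and $\mathcal{L}^{-1}$ on the other produces a perfect pairing into $\mathcal{L} \otimes \mathcal{L}^{-1} = \mathscr{O}_{X}$, using $(M \otimes \mathcal{L})^{\vee} = M^{\vee} \otimes \mathcal{L}^{-1}$ for $\mathcal{L}$ invertible; applying this with $M = \Der_{X}(-\log D_{\red})$, $N = \Omega_{X}^{1}(\log D_{\red})$, $\mathcal{L} = \mathscr{O}_{X}(D_{\red}-D)$ and then identifying the resulting tensored pairing with contraction via Proposition \ref{prop - appendix relating log forms along D to reduced D} and $\mathscr{O}_{X}$-bilinearity of $\iota$ finishes the proof.
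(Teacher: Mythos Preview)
Your argument is correct and takes a genuinely different route from the paper's. The paper proceeds by a direct, hands-on verification: it sandwiches $\Hom_{\mathscr{O}_{X}}(\Der_{X}(-\log D)\otimes\mathscr{O}_{X}(D_{\red}-D),\mathscr{O}_{X})$ between $\Omega_{X}^{1}$ and $\Omega_{X}^{1}(D)$ by dualizing obvious inclusions, and then, for a candidate $\eta$ in this $\Hom$, checks the criterion $df\wedge\eta\in\Omega_{X,p}^{k+1}$ of Proposition~\ref{prop - appendix, first property log forms} by pairing $\eta$ against the explicit ``Hamiltonian'' derivations $\frac{\partial f_{\red}}{\partial x_{j}}\partial_{i}-\frac{\partial f_{\red}}{\partial x_{i}}\partial_{j}\in\Der_{X}(-\log D_{\red})$, suitably twisted by $f/f_{\red}$. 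The other adjoint isomorphism is then obtained by dualizing the first and invoking reflexivity of $\Der_{X}(-\log D_{\red})$.

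Your approach instead leverages Proposition~\ref{prop - appendix relating log forms along D to reduced D} to reduce wholesale to the classical reduced perfect pairing: once one knows $\Omega_{X}^{1}(\log D)=\Omega_{X}^{1}(\log D_{\red})\otimes\mathscr{O}_{X}(D-D_{\red})$ and $\Der_{X}(-\log D)=\Der_{X}(-\log D_{\red})$, the statement is the formal fact that tensoring a perfect pairing by an invertible sheaf on one side and its inverse on the other preserves perfection, together with the $\mathscr{O}_{X}$-bilinearity check $\iota_{h\chi}(h^{-1}\omega)=\iota_{\chi}(\omega)$ identifying the twisted pairing with contraction. This is shorter and more conceptual, and it explains structurally \emph{why} the twist by $\mathscr{O}_{X}(D_{\red}-D)$ is exactly the right one. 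The paper's argument, by contrast, is more self-contained in that it does not invoke the reduced perfect pairing as a black box (only reflexivity of $\Der_{X}(-\log D_{\red})$), and its explicit coordinate computation may be of independent use.
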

\begin{proof}
By Proposition \ref{prop - appendix, contraction} and Corollary \ref{cor - appendix, listing similar properties for non-reduced}.(1), contracting gives containments
\begin{equation} \label{eqn - appendix, log derivations twisted contained inside hom of one forms}
\Der_{X}(-\log D) \otimes_{\mathscr{O}_{X}} \mathscr{O}_{X}(D_{\red} - D) \subseteq \Hom_{\mathscr{O}_{X}}(\Omega_{X}^{1}(\log D), \mathscr{O}_{X}).
\end{equation}
and 
\begin{equation} \label{eqn - appendix, log one forms contained inside hom of twisted log der}
\Omega_{X}^{1}(\log D) \subseteq \Hom_{\mathscr{O}_{X}}(\Der_{X}(-\log D) \otimes_{\mathscr{O}_{X}} \mathscr{O}_{X}(D_{\red} - D), \mathscr{O}_{X}).
\end{equation}

We first show the reverse containment for \eqref{eqn - appendix, log one forms contained inside hom of twisted log der}. Note that 
\[
\Der_{X} \otimes_{\mathscr{O}_{X}} \mathscr{O}_{X}(-D) \subseteq \Der_{X}(-\log D) \otimes_{\mathscr{O}_{X}} \mathscr{O}_{X}(D_{\red} - D) \subseteq \Der_{X}
\]
and the two induced cokernels vanish outside of $D$. So the first $\Ext$-module of the two cokernels vanish, meaning dualizing gives inclusions
\[
\Omega_{X}^{1} \subseteq \Hom_{\mathscr{O}_{X}}(\Der_{X}(-\log D) \otimes_{\mathscr{O}_{X}} \mathscr{O}_{X}(D_{\red} - D), \mathscr{O}_{X}) \subseteq \Omega_{X}^{1}(D).
\]
So $\eta \in \Hom_{\mathscr{O}_{X}}(\Der_{X}(-\log D) \otimes_{\mathscr{O}_{X}} \mathscr{O}_{X}(D_{\red} - D), \mathscr{O}_{X})$ can be thought of as a one-form with poles of order at most $n_{i}$ along each $Z_{i}$ of $D$. To show the containment in \eqref{eqn - appendix, log one forms contained inside hom of twisted log der} is an equality, it is enough to do this stalkwise; by Proposition \ref{prop - appendix, first property log forms} it suffices to show $df \wedge \eta \in \Omega_{X,p}^{1}$ for $f$ defining $D$ at $p$. We may write $f = z_{1}^{n_{1}} \cdots z_{m}^{n_{m}}$ where $z_{i}$ defines $Z_{i}$ at $p$. Writing $\eta = (1/f) \sum_{i} a_{i} dx_{i}$, we must show that $df \wedge \eta \in \mathscr{O}_{X,p}$ or, explicitly, that
\[
a_{i} \frac{\partial f}{\partial x_{j}} - \frac{\partial f}{\partial x_{i}} a_{j} \in \mathscr{O}_{X,p} \cdot f
\]
for all $1 \leq i, j \leq n$. As $\mathscr{O}_{X,p}$ is a UFD, this reduces to showing 
\[
n_{k-1}z_{k}^{n_{k}-1} \left( a_{i} \frac{\partial z_{k}}{\partial x_{j}} - \frac{\partial z_{k}}{\partial x_{i}} a_{j} \right) \in \mathscr{O}_{X,p} \cdot z_{k}^{n_{k}}
\]
for all $1 \leq k \leq m$, or, alternatively, that 
\begin{equation} \label{eqn - appendix, final prop supp equation}
a_{i} \frac{\partial f_{\red}}{\partial x_{j}} - \frac{\partial f_{\red}}{\partial x_{i}} a_{j} \in \mathscr{O}_{X,p} \cdot f_{\red}.
\end{equation}
As 
\[
\chi = \frac{\partial f_{\red}}{\partial x_{j}} \partial_{i} - \frac{\partial f_{\red}}{\partial x_{i}} \partial_{j} \in \Der_{X}(-\log f_{\red}),
\]
we see that 
\[
\eta \text{ maps } \frac{f}{f_{\red}} \chi \in \Der_{X}(-\log D) \otimes_{\mathscr{O}_{X}} \mathscr{O}_{X}(D_{\red} - D) \text{ into } \mathscr{O}_{X}.
\]
This verifies \eqref{eqn - appendix, final prop supp equation}, finishing the justification that \eqref{eqn - appendix, log one forms contained inside hom of twisted log der} is actually an equality.

To certify \eqref{eqn - appendix, log derivations twisted contained inside hom of one forms} is an equality it is enough to dualize (into $\mathscr{O}_{X}$) the (now verified) equality in \eqref{eqn - appendix, log one forms contained inside hom of twisted log der}: since $\Der_{X}(-\log D) = \Der_{X}(-\log D_{\red})$ is reflexive (cf. (1.6) of \cite{SaitoLogarithmicForms}), the resultant equality of duals gives equality in \eqref{eqn - appendix, log derivations twisted contained inside hom of one forms}.
\end{proof}

\begin{remark}
Note that whether or not $\Omega_{X}^{k}(\log D)$ is an exterior algebra is not actually used in any of the background material on which this paper relies. What are used are the $\mathscr{O}_{X}$-module properties of $\Omega_{X}^{k}(\log D)$, in particular ones involving (2), (3), (4), (5) of Corollary \ref{cor - appendix, listing similar properties for non-reduced}. (For example, while the Liouville complex of \cite{uli} is defined with an exterior product, the actual property invoked is that 
\[
\Omega_{X}^{k}(\log D) \wedge \Omega_{X}^{1} \subseteq \Omega_{X}^{k+1}(\log D)
\]
which is always true by Proposition \ref{prop - appendix, first property log forms}.)
\end{remark}

\bibliographystyle{abbrv}
\bibliography{refs}

\end{document}